  \DeclareSymbolFont{AMSb}{U}{msb}{m}{n}
  \DeclareSymbolFontAlphabet{\mathbb}{AMSb}
\DeclareFontFamily{U}{mathx}{\hyphenchar\font45}
\DeclareFontShape{U}{mathx}{m}{n}{<-> mathx10}{}
\DeclareSymbolFont{mathx}{U}{mathx}{m}{n}
\DeclareMathAccent{\widebar}{0}{mathx}{"73}
\tikzstyle{arrow} = [thick,->,>=stealth]
\tikzset{Matrix/.style={matrix of nodes, font=\footnotesize,text height=1pt, text depth=0.5pt, text width=8.5pt, align=center, column sep=0pt, row sep=0pt, nodes in empty cells}}
\numberwithin{equation}{section}
\newtheorem{theorem}{Theorem}
\numberwithin{theorem}{section}
\newtheorem{remark}[theorem]{Remark}
\newtheorem{lemma}[theorem]{Lemma}
\newtheorem{proposition}[theorem]{Proposition}
\newtheorem{assumption}[theorem]{Assumption}
\newcommand{\Mh}{\mathcal{M}_{h}}
\newcommand{\Th}{\mathcal{T}_{h}}
\newcommand{\Thloc}{\mathcal{T}^i_{h}}
\newcommand{\Fh}{\mathcal{F}_{h}}
\newcommand{\interior}{{\rm in}}
\newcommand{\boundary}{{\rm bd}}
\newcommand{\Fhi}{\Fh^{\interior}}
\newcommand{\Fhb}{\Fh^{\boundary}}
\newcommand{\Fhbloc}[1]{\Fh^{\boundary,#1}}
\newcommand{\FFhb}{\mathcal{F\!F}_h^{\boundary}}
\newcommand{\FFhbloc}{\mathcal{F\!F}_h^{\boundary,i}}
\newcommand{\lproj}[2]{\pi_{#1}^{#2}}
\newcommand{\projFace}{\mathfrak{F}}
\newcommand{\ul}[1]{\underline{#1}}
\newcommand{\Uh}{\ul{U}_{h}}
\newcommand{\Uhi}{U_{h}}
\newcommand{\Uhloc}{\ul{U}^i_{h}}
\newcommand{\Uhb}{U_{h}^{\partial}}
\newcommand{\Uhbb}{U_{h}^{\partial,\boundary}}
\newcommand{\Uhbbloc}{U_{h}^{\partial,\boundary,i}}
\newcommand{\tr}{\gamma}
\newcommand{\ffp}{{f \! f' }}
\newcommand{\dffp}{|x_f-x_{f'}|}
\newcommand{\dist}{\mathop{\rm dist}}
\newcommand{\distV}{\mathop{\rm dist}\nolimits_V}
\newcommand{\distH}{\mathop{\rm dist}\nolimits_H}
\newcommand{\norm}[2]{\|#2\|_{#1}}
\newcommand{\Norm}[2]{\left\|#2\right\|_{#1}}
\newcommand{\seminorm}[2]{|#2|_{#1}}
\newcommand{\tnorm}[2]{\vert #2\vert_{#1}}
\newcommand{\Poly}[1]{\mathbb{P}_{#1}}
\newcommand{\dom}{\Omega}
\newcommand{\An}[3]{{\ocirc}_{#2}^{#1}(#3)}
\newcommand{\Above}{\mathcal{V}}
\newcommand{\ol}[1]{\overline{#1}}
\newcommand{\A}{A}
\newcommand{\La}{\mathcal{L}}
\newcommand{\Ca}{\mathcal{C}_{\ffp s}}
\newcommand{\W}{\mathcal{W}}
\newcommand{\Iffp}{\mathcal{I}_{\ffp}}
\newcommand{\fint}{g}
\newcommand{\fbou}{f}
\newcommand{\HoneOp}{\boldsymbol{\mathcal{A}}}
\newcommand{\HhalfOp}{\boldsymbol{\mathcal{H}}}
\title[]{A discrete trace theory for non-conforming polytopal hybrid discretisation methods
} 
\date{\today}
\keywords{}
\address{$^\dagger$School of mathematics\\Monash university\\Clayton\\Victoria 3800\\Australia}
\address{$^\sharp$IMAG, Univ. Montpellier, CNRS, Montpellier, France.}
\address{$^*$Corresponding author}
\author[S. Badia]{Santiago Badia$^{\dagger}$}
\email{santiago.badia@monash.edu}
\author[J. Droniou]{Jerome Droniou$^{\sharp\dagger*}$}
\email{jerome.droniou@umontpellier.fr}
\author[J. Tushar]{Jai Tushar$^{\dagger}$}
\email{jai.tushar@monash.edu}
\begin{document}

\begin{abstract}
	In this work we develop a discrete trace theory that covers non-conforming hybrid discretization methods and holds on polytopal meshes. A notion of discrete trace seminorm is defined, and {\it trace} and {\it lifting} results with respect to a discrete $H^1$-seminorm on the hybrid fully discrete space are proven. Finally, we conduct a numerical test in which we compute the proposed discrete operators and investigate their spectrum to verify the theoretical analysis. The development of this theory is motivated by the design and analysis of preconditioners for hybrid methods, e.g., of substructuring domain decomposition type.
\end{abstract}

\maketitle

\tableofcontents

\section{Introduction}
\label{sec:introduction}

Trace theory is at the heart of condition number analysis of several sub-structuring non-overlapping Domain Decomposition Methods (DDMs), such as Balancing Domain Decomposition by Constraints (BDDC) and Finite Element Tearing and Interconnecting -- Dual Primal (FETI-DP) \cite{Dohrmann-BDDC-Algorithm,Toselli-Widlund-DDM-book}. In the analysis of DDMs, trace theory is required to bound the injection operator from the space of discontinuous solutions (across inter-subdomain boundaries) into the original space, defined via a weighting operator and a subdomain-local discrete harmonic extension. 
The continuity of this injection relies on an equivalence between the seminorm induced by the bilinear form on the subdomain $\Omega$ and the $H^{1/2}(\partial \dom)$-seminorm (the trace seminorm) of truncations of functions to subdomain faces. 

When using grad-conforming Finite Element Methods (FEMs) for the discretisation, one can rely on the continuous trace theory and $H^{1/2}(\partial \Omega)$-seminorm, which is characterized by the Sobolev--Slobodeckij seminorm \cite{Nezza-FractionalSobolevSpace}
\begin{align}\label{cts:Honehalf}
  |v|_{1/2, \partial \dom}^2 \doteq \int_{\partial \dom} \int_{\partial \dom} \frac{|v(x) - v(y)|^2}{|x - y|^{d}}.
\end{align}
This seminorm satisfies two well-known properties, namely, \emph{trace inequality}  and \emph{lifting} \cite[Theorem 4.1]{Xu-Zou-1998-NonoverlappingDD}. In DDM analysis, the former implies that the restriction of functions to the subdomain interface is stable and the latter then lifts this restriction to the interior of the neighbouring subdomain.

For non-conforming FEMs, like mixed FEM~\cite{Boffi-Brezzi-Fortin-Mixed-FEM-Book}, Discontinuous Galerkin (DG) methods~\cite{Arnold-Brezzi-Cockburn-Marini-Unified-DG}, Hybridizable DG (HDG)~\cite{Cockburn-Gopalakrishnan-Lazarov-HDG}, non-conforming Virtual Elements (ncVEM)~\cite{Dios-Lipnikov-Manzini-ncVEM}, Weak Galerkin (WG)~\cite{Wang-Ye-WG} and Hybrid High Order (HHO)~\cite{di-pietro.ern:2015:hybrid}, showing the seminorm equivalence is not trivial. This is because the trace of piecewise polynomial functions in $L^2(\dom)$ do not have $H^{1/2}(\partial \dom)$-regularity. A first breakthrough in this direction was made for mixed FEMs and Balancing Neumann-Neumann (BNN) preconditioning in \cite{Cowsar-Mandel-Wheeler-1995-BDDC-Mixed}. There, the authors addressed this issue by constructing an interpolant of a function on the interface onto a conforming FE space, and then showed equivalence of its trace seminorm with the norm induced by the bilinear form on the subdomain interface. The same idea has been applied to a large class of DG methods and BDDC preconditioning in \cite{Diosady-Darmofal-David-2012-BDDC-DG}. Furthermore, the authors in \cite{Tu-Wang-BDDC-2016-HDG} combine this technique with a spectral equivalence established between HDG and hybridized RT methods in \cite{Cockburn-Dubios-Gopalakrishnan-Tan-2014-Multigrid-HDG} to show quasi-optimal condition number bounds for the BDDC preconditioning of HDG methods. The same strategy as in HDG is used for WG methods in \cite{Tu-Wang-2018-BDDC-WG}. However, since this approach relies on conforming interpolants, it cannot be extended to polytopal meshes, one of the main motivation behind the use of non-conforming methods. All the analyses for non-conforming spaces so far have been carried out on conforming simplicial or quadrilateral/hexahedral meshes. Besides, to the best of our knowledge, there is no literature available addressing ncVEM or HHO.

As discussed above, the main challenge when designing and analysing DDMs for non-conforming methods on polytopal meshes is the lack of a complete trace theory. This is the main motivation behind this work. We design a discrete trace seminorm, which avoids the need for conforming interpolants, and prove that it enjoys analogous properties (trace and lifting) with respect to a discrete $H^1(\dom)$-seminorm as the conforming counterparts. As far as we know, this path has never been explored before.

The closest work we could identify in this direction is \cite{Kashiwbara-Takahito-Issei-Zhou-2019-DiscreteHonehalf}, in which the authors design such a norm for the non-conforming Crouzeix--Raviart finite element and combine it with an enrichment process to prove a discrete lifting result. However, this norm is too strong to also satisfy the trace inequality, mainly because the trace seminorm is not easily localisable. To see this, letting $\{\tau\}$ be a family of simply connected subdomains that partition $\partial \dom$, we can rewrite \eqref{cts:Honehalf} as follows
\begin{align*}
  | v |_{1/2, \partial \dom}^2 = \sum_{\tau} | v |_{1/2, \tau}^2 + \sum_{\tau} \int_{\tau} \int_{\partial \dom \backslash \tau} \frac{|v(x) - v(y)|^2}{|x - y|^{d}}.
\end{align*}
The second term in the above equality is difficult to localise. Some studies tackling this aspect can be found in  \cite{Faermann-2002-Localization-BEM,bertoluzza2023localization} and references therein. Our design of the discrete $H^{1/2}(\partial \dom)$-seminorm (see Section \ref{sec:DiscHonehalf}) mimics the above decomposition in the discrete setting, using the mesh size as a scaling factor. 

Conceptually, our approach is close to the techniques of Discrete Functional Analysis (DFA) as developed, e.g., in \cite{eymard.gallouet.ea:2010:discretization,Droniou.Eymard:2018:GDM}. These techniques have however, so far, only been used to prove inequalities involving as boundary norm the $L^2(\partial\dom)$-norm (which prevented from proving a lifting property), and never to design or analyse a discrete $H^{1/2}(\partial\dom)$-seminorm.
Following DFA principles, we only manipulate here discrete spaces, whose elements are vectors of polynomials in the mesh cells and on the mesh faces. Our arguments do not invoke any continuous trace or lifting result; instead, we mimic at the discrete level the proofs of these continuous results. This mimicking requires to construct various specific sets of cells or faces, on which estimates need to be established.

We finally note that, even though our driving motivation for a discrete trace theory is the design of robust and scalable preconditioners for non-conforming polytopal discretisations, this theory is of interest in itself, and can be applied in other scenarios -- e.g., in the design and analysis of operator preconditioners  involving trace inner products \cite{Kuchta2016}. 

\subsection{Main contributions and outline}
The main contribution of this work is the development of a discrete trace theory on polytopal meshes spanning a large class of hybrid methods. It hinges on the design and analysis of a discrete trace seminorm. We define it in Section \ref{sec:DiscHonehalf} and state its two main properties, namely, discrete trace inequality in Theorem \ref{thm:trace} and lifting property in Theorem \ref{thm:lifting}. These results are dimension-independent and proved under a quasi-uniformity assumption on the polytopal mesh (see Assumption \ref{assum:reg.mesh}). 

The proofs of these theorems are structured into two main stages. In the first stage, we establish the discrete trace inequality and lifting in the flat case, when $\dom$ is a cube and we only consider one side of its boundary. This stage is detailed in Sections \ref{sec:proof.trace.cube} and \ref{sec:proof.lifting.cube}, respectively. It mimics the proofs done in the continuous settings (briefly recalled in Sections \ref{sec:trace.continuous} and \ref{sec:lifting.continuous}, respectively) and additionally relies on some results on partitionings of the mesh cells and faces, whose proofs are postponed at the end of their corresponding sections for better readability (Sections \ref{sec:proof.technical.trace} and \ref{sec:proof.technical.lifting}, respectively). In the second stage, we extend these results to a generic polytopal domain $\dom$ by localising and gluing along $\partial \dom$. This stage is detailed in Section \ref{sec:extensions}, in which we also demonstrate that the trace inequality and lifting results also apply to all kinds of discrete spaces encountered in polytopal methods.

In Section \ref{sec:NumExp} we numerically illustrate the theoretical results. An appendix, Section \ref{sec:appendix}, provides two tables gathering the main notations used in the paper; the reader is encouraged to use these tables as they go over the most technical aspects of the proofs.


\section{Discrete $H^{1/2}(\partial \dom)$-seminorm, trace and lifting}\label{sec:DiscHonehalf}
Let $\dom \subset \mathbb{R}^d$ $(d \geq 2)$ be a polytopal domain with boundary $\partial \dom$. We consider a partition $\Th$ of $\dom$ into a finite collection of non-empty disjoint polytopes, called \emph{cells}, and define its mesh \emph{skeleton} as $\bigcup_{t \in \Th} \partial t$. For each cell $t \in \Th$, we denote by $\mathcal{F}_t$ the set of faces of $t$, which are $(d-1)$-dimensional polytopes.
The set of boundary faces, that is, the partition of the domain boundary $\partial \Omega$ generated by the domain partition, is denoted by 
\[
\Fhb \doteq \bigcup_{t \in \mathcal{T}_h} \{ f \in \mathcal{F}_t \ : \ f \subset \partial \Omega \},
\] 
and the set of interior faces by
\[
\Fhi \doteq \bigcup_{t\in\mathcal T_h}\{f\in\mathcal F_t\ :\ f\subset\Omega\}.
\] 
We denote by $\Fh \doteq \Fhb \cup \Fhi$ the set of all faces of the mesh.

We combine the domain and skeleton meshes into a \emph{hybrid} mesh $\Mh \doteq (\Th, \Fh)$ (see \cite[Definition 1.4]{di-pietro.droniou:2020:hybrid}). The diameter of $t \in \Th$ is denoted with $h_t$, and we define the \emph{mesh size} $h = \max_{t \in \Th} h_t$. Analogously, we denote by $h_f$ the diameter of $f \in \Fh$. 

Throughout the paper, $|X|_{n}$ is the $n$-dimensional measure of a set $X\subset \mathbb{R}^d$. When $X$ is a set of cells/faces, with an abuse of notation we write $|X|_n$ for the $n$-dimensional measure of the union of the cells/faces in $X$.

We assume that the hybrid mesh of $\dom$ is regular and quasi-uniform.
Some of the arguments developed in the proofs can easily be adapted to non-quasi-uniform meshes; others are, however, much less obvious to establish when the mesh is not quasi-uniform. Considering that many of our proofs are already quite technical, for legibility purposes we decided to consider quasi-uniform meshes throughout the paper.

\begin{assumption}[Mesh regularity]\label{assum:reg.mesh}
  The mesh $\Mh$ is regular as per \cite[Definition 1.9]{di-pietro.droniou:2020:hybrid}, and quasi-uniform in the sense that there exists $\varrho>0$ independent of $h$ such that, for all $t\in\Th$, $h\le \varrho h_t$. 
\end{assumption} 

On the hybrid mesh, we define bulk and trace piecewise polynomial functional spaces as follows. 
We consider the \emph{{hybrid} space} $\Uh = \Uhi \times \Uhb$, where 
\[
\Uhi \doteq \bigtimes_{t \in \Th} \mathbb{P}_k(t), \quad
\Uhb \doteq \bigtimes_{f \in \Fh} \mathbb{P}_k(f),  
\] 
and $\mathbb{P}_k$ is the space of polynomials of total degree $k$ or less.

We define the following discrete $H^1(\dom)$-seminorm on $\Uh$, inspired by the HHO theory \cite[Section 2.2.1]{di-pietro.droniou:2020:hybrid} (albeit with a slightly different scaling of boundary terms, see \cite[Eq.~(2.4)]{droniou.yemm:2021:robust}): For all $\ul{v}_h = ((v_t)_{t \in \Th}, (v_f)_{f \in \mathcal{F}_t}) \in \Uh$, 
\begin{equation}\label{eq:def.disc.H1}
  \seminorm{1,h}{\ul{v}_h}\doteq \left( \sum_{t\in\Th}\seminorm{1,t}{\ul{v}_t}^2\right)^{1/2}\quad\mbox{ with }\quad
  \seminorm{1,t}{\ul{v}_t}^2\doteq \norm{L^2(t)}{\nabla v_t}^2+\sum_{f\in\mathcal{F}_t}h_t^{-1}\norm{L^2(f)}{v_f-v_t}^2.
\end{equation}
 We also design here a novel notion of discrete $H^{1/2}(\partial \dom)$-seminorm on restrictions of hybrid spaces to the boundary. Setting 
 $$
  \Uhbb \doteq \{ w_h = (w_f)_{f \in \Fhb} : w_f \in \mathbb{P}_k(f) \;\; \forall f \in \Fhb \},
 $$
 we have the natural trace operator $\tr:\Uh \to \Uhbb$ such that
   \[
   \tr (\ul{v}_h)|_f = v_f\qquad\forall f\in\Fhb\,,\quad\forall \ul{v}_h\in\Uh.
   \]
The discrete $H^{1/2}(\partial \dom)$-seminorm of $w_h\in\Uhbb$ is defined by
\begin{equation}\label{eq:def.disc.Hhalf}
  \tnorm{1/2,h}{w_h}^2 \doteq \sum_{f\in\Fhb} h_f^{-1} \norm{L^2(f)}{w_f - \ol{w}_f}^2 + \sum_{(f,f')\in\FFhb} |f|_{d-1} |f'|_{d-1} \frac{|\ol{w}_f - \ol{w}_{f'}|^2}{\dffp^d},
\end{equation}
where $\ol{w}_f \doteq \frac{1}{|f|_{d-1}} \int_{f} w_f $ is the average of $w_f$ over $f$,
\begin{equation}\label{eq:def.FFhb}
  \FFhb=\{(f,f')\in\Fhb\times\Fhb\,:\,f\neq f'\},
\end{equation}
and $x_f$ and $x_{f'}$ are the centroids\footnote{``The'' centroid $x_f$ of a face $f$ is actually arbitrarily chosen such that $f$ contains a ball centered at $x_f$ and of radius $\varpi h_f$, where $\varpi$ only depends on the mesh regularity parameter in \cite[Definition 1.9]{di-pietro.droniou:2020:hybrid}; a similar definition of centroid is adopted for cells.} of $f$ and $f'$. In the right-hand side of \eqref{eq:def.disc.Hhalf}, the first term measures the local variations\footnote{Note that, by a discrete inequality, $h_f^{-1} \norm{L^2(f)}{w_f - \ol{w}_f}^2\simeq \seminorm{H^{1/2}(f)}{w_f}^2$. However, the form we have chosen is more suitable for the subsequent analysis.}, while the second term measures the long-range variations. In the continuous setting, both terms are embedded into the double integral  \eqref{cts:Honehalf} that defines the $H^{1/2}(\partial \dom)$-seminorm. Separating the two contributions is natural in the discrete setting, due to the introduction of a particular scale (the mesh size). 

From here onwards, we write $a\lesssim b$ to indicate that $a\le Cb$ with $C\ge 0$ depending only on $\dom$, the mesh regularity parameter in \cite[Definition 1.9]{di-pietro.droniou:2020:hybrid}, the quasi-uniformity parameter $\varrho$ and, possibly, on the polynomial degrees involved in $a,b$. The notation $a\simeq b$ means that $a \lesssim b \lesssim a$.

The two main properties of the discrete $H^{1/2}(\partial \dom)$-seminorm are summarised in the following theorems, which form the main contributions of this work. 

\begin{theorem}[Trace inequality]\label{thm:trace}
  The following discrete trace inequality holds:
  \begin{equation}\label{eq:trace}
    \tnorm{1/2,h}{\tr (\ul{v}_h)} \lesssim \seminorm{1,h}{\ul{v}_h}\qquad\forall \ul{v}_h\in\Uh.
  \end{equation}
\end{theorem}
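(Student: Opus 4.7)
The plan is to split the discrete seminorm defined in \eqref{eq:def.disc.Hhalf} into its local (face-variance) and long-range (centroid-difference) contributions, and bound each separately by $\seminorm{1,h}{\ul{v}_h}^2$.

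For the local term $\sum_{f\in\Fhb}h_f^{-1}\|v_f-\ol{v}_f\|_{L^2(f)}^2$, I would proceed face-by-face. For each $f\in\Fhb$ with parent cell $t\in\Th$, I would exploit the fact that $\ol{v}_f$ is the $L^2(f)$-best constant approximation of $v_f$ to bound $\|v_f-\ol{v}_f\|_{L^2(f)}^2$ by $\|v_f-c\|_{L^2(f)}^2$ with $c=\frac{1}{|f|_{d-1}}\int_f v_t$. The triangle inequality then splits this into $\|v_f-v_t\|_{L^2(f)}^2$, which is directly part of $\seminorm{1,t}{\ul{v}_t}^2$, and $\|v_t-c\|_{L^2(f)}^2$. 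The latter is controlled by a face Poincaré inequality (since $c$ is the mean of $v_t$ on $f$) followed by the polynomial trace inequality $\|\nabla v_t\|_{L^2(f)}^2\lesssim h_t^{-1}\|\nabla v_t\|_{L^2(t)}^2$, yielding $\|v_t-c\|_{L^2(f)}^2\lesssim h_f^2 h_t^{-1}\|\nabla v_t\|_{L^2(t)}^2\lesssim h_t\|\nabla v_t\|_{L^2(t)}^2$. Dividing by $h_f\simeq h_t$ and summing over $f\in\Fhb$ then yields the local bound.

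The bulk of the work lies in controlling the long-range term. Following the two-stage plan announced in the outline, I would first establish it in the \emph{flat case}, where $\dom$ is a cube and $\Fhb$ consists of faces on a single side $\Gamma$ (Section \ref{sec:proof.trace.cube}), mimicking the continuous trace proof recalled in Section \ref{sec:trace.continuous}. The idea is to replace the continuous identity $v(x)-v(y)=\int \nabla v$ along a segment from $x\in f$ to $y\in f'$ by a telescopic sum of face and cell averages along a chain of cells $t_0,\ldots,t_n$ inside $\dom$ with $f\subset\partial t_0$, $f'\subset\partial t_n$, and consecutive cells sharing an interior face $f_{i,i+1}$. Inserting means of the polynomials $v_{t_i}$ on their relevant faces, $\ol{v}_f-\ol{v}_{f'}$ becomes a telescoping sum whose terms are either jumps $\|v_{f_{i,i+1}}-v_{t_i}\|_{L^2(f_{i,i+1})}$ already carried by $\seminorm{1,t_i}{\ul{v}_{t_i}}$ or intra-cell oscillations handled by the same polynomial Poincaré/trace arguments as above. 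Squaring and applying Cauchy--Schwarz then reduces the flat estimate to a combinatorial statement on cell-chains.

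The crux, deferred to Section \ref{sec:proof.technical.trace}, is the construction itself: for each pair $(f,f')\in\FFhb$ on $\Gamma$, one must choose a chain so that, when contributions are summed with weight $|f|_{d-1}|f'|_{d-1}\dffp^{-d}$, every cell and interior face is used with a total multiplicity bounded independently of $h$. Intuitively, the chain must fan out from $f$ toward $f'$ through a ``prism'' of cells of height $\simeq\dffp$, so that the volume of cells serving any fixed pair compensates the $\dffp^{-d}$ weight -- this is the discrete analogue of the continuous change of variables, and is where quasi-uniformity (Assumption \ref{assum:reg.mesh}) is crucially used to obtain the correct scaling. Once the flat bound is established, the general polytopal case (Section \ref{sec:extensions}) follows by covering $\partial\dom$ with finitely many flat patches: in-patch contributions reduce to the flat case via a quasi-isometric flattening, while pairs lying on distinct patches satisfy $\dffp\gtrsim 1$ and are absorbed directly by an $L^2(\partial\dom)$-type estimate on the boundary traces.
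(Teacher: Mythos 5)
Your treatment of the local term is fine (slightly different splitting than the paper, which inserts $\pi_t^0 v_t$ and $\pi_f^0 v_f$ rather than the mean of $v_t$ over $f$, but this is a cosmetic variation). Your overall framing — mimicking the continuous proof by telescoping cell/face averages along a chain that first climbs to height $\simeq\dffp$ above $f$, then crosses to above $f'$, and descends, followed by localisation for the general polytopal case — also matches the paper's plan.

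However, there is a genuine gap in the argument for the long-range term. You claim that, after squaring and applying Cauchy--Schwarz, the proof reduces to a ``combinatorial statement on cell-chains'' ensuring that every cell is used with multiplicity $\lesssim 1$. For the vertical legs of the chain this claim is false: a cell $t$ at depth $\delta_t \simeq mh$ lies on the vertical segment above $f$ for \emph{every} pair $(f,f')$ with that $f$ and $\dffp \gtrsim mh$. Tracing your naive Cauchy--Schwarz argument through the weights $|f|_{d-1}|f'|_{d-1}/\dffp^d$ and the cardinality $\#\W_{lf}\lesssim l^{d-2}$ gives, for such a cell, a total contribution proportional to $\sum_{l\ge m}\tfrac{1}{l}\,\seminorm{1,t}{\ul{v}_t}^2$, which is logarithmically divergent — you would end up with $\tnorm{1/2,h}{\gamma(\ul{v}_h)}^2\lesssim \log(1/h)\,\seminorm{1,h}{\ul{v}_h}^2$ instead of the claimed estimate. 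The paper avoids this by \emph{not} applying Cauchy--Schwarz to the vertical telescoping sum. Instead, it groups boundary-face pairs by distance (the sets $\W_l$), defines the weighted $\ell^2$-norm $\chi_l$ on $\mathbb{R}^{\Fhb}$, applies the triangle inequality with respect to $\chi_l$ (the discrete analogue of Minkowski's integral inequality \eqref{eq:norm.integral}), and then invokes a \emph{discrete Hardy inequality} (Lemma \ref{lem:Discrete-Hardy}) to control $\sum_l\bigl(\tfrac1l\sum_{m\le l}r_m\bigr)^2$ by $\sum_l r_l^2$. This Hardy step is exactly what replaces your bounded-multiplicity claim; it is the discrete counterpart of the integral Hardy inequality used in the continuous proof (Section \ref{sec:trace.continuous}) and is the non-trivial ingredient missing from your proposal. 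The horizontal leg, by contrast, does admit a direct multiplicity/annulus estimate (Lemma \ref{lem:Ca}), so your intuition is correct for that part — but not for the vertical one.
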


\begin{theorem}[Lifting]\label{thm:lifting}
  There exists a linear lifting operator $\mathcal{L}_h: \Uhbb \rightarrow \Uh$ such that: 
  \begin{equation}\label{eq:lifting}
    \seminorm{1,h}{\mathcal{L}_h(w_{h})}\lesssim \tnorm{1/2, h}{w_{h}}\quad\forall w_h\in\Uhbb, \qquad \gamma \circ \mathcal{L}_h  = \mathrm{Id}_{\Uhbb}.
  \end{equation}
\end{theorem}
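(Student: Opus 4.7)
The plan is to mimic at the discrete level the classical Stein-type extension
\begin{equation*}
u(\tilde x,z) = \int_{\mathbb{R}^{d-1}} g(\tilde x - z\tilde y)\,\phi(\tilde y)\,d\tilde y,
\end{equation*}
following the two-stage structure outlined in the introduction. In the flat case (Section \ref{sec:proof.lifting.cube}), where $\dom$ is a cube and $w_h$ is supported on a single face $\Gamma\subset\partial\dom$, for each cell $t\in\Th$ with centroid $x_t=(\tilde x_t,z_t)$ and $z_t\doteq\dist(x_t,\Gamma)$, I would set
\begin{equation*}
v_t \doteq \sum_{f\in\N_t}\alpha_{t,f}\,\ol{w}_f,
\end{equation*}
where $\N_t\subset\{f\in\Fhb\,:\,f\subset\Gamma\}$ gathers the faces with $|\tilde x_t-x_f|\lesssim z_t$ and the $\alpha_{t,f}\ge 0$ are convex weights with $\alpha_{t,f}\simeq |f|_{d-1}/z_t^{d-1}$. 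For interior faces $f\in\Fhi$, I would define $v_f$ by the analogous average using a set $\N_f$ at scale $\dist(x_f,\Gamma)$; for $f\in\Fhb$ I would set $v_f=w_f$ if $f\subset\Gamma$ and $v_f=0$ otherwise, so that $\gamma\circ\mathcal L_h=\mathrm{Id}_{\Uhbb}$ holds by construction. The existence of the averaging sets $\N_t,\N_f$ with the prescribed scalings under Assumption \ref{assum:reg.mesh} is an independent combinatorial statement, packaged as technical lemmas in Section \ref{sec:proof.technical.lifting}.

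To estimate $\seminorm{1,h}{\mathcal L_h(w_h)}$, I would expand the differences entering $\seminorm{1,t}{\mathcal L_h(w_h)}$ as telescoping combinations
\begin{equation*}
v_t - v_{t'} \;=\; \sum_{f\in\N_t}\sum_{f'\in\N_{t'}}\alpha_{t,f}\,\alpha_{t',f'}(\ol{w}_f-\ol{w}_{f'}),
\end{equation*}
and similarly for $v_t-v_f$. Controlling $\|\nabla v_t\|_{L^2(t)}$ via an inverse estimate in terms of such neighbour-cell differences, and using $\alpha_{t,f}\lesssim |f|_{d-1}/z_t^{d-1}$ together with $|x_f-x_{f'}|\lesssim z_t\simeq h_t$ for pairs contributing to the same $\N_t$, every contribution is bounded by $|\ol{w}_f-\ol{w}_{f'}|^2|f|_{d-1}|f'|_{d-1}$ times a cell-dependent scalar. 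Swapping the order of summation, the combinatorial heart is that for each fixed $(f,f')\in\FFhb$ only cells with $z_t\gtrsim |x_f-x_{f'}|$ contribute, and the scalars then add up to $\lesssim 1/|x_f-x_{f'}|^d$, reproducing the long-range term of \eqref{eq:def.disc.Hhalf}. The boundary matching $h_t^{-1}\|w_f-v_t\|_{L^2(f)}^2$ on $\Gamma$ is handled by splitting $w_f-v_t=(w_f-\ol{w}_f)+(\ol{w}_f-v_t)$: the first piece is absorbed by the local term of \eqref{eq:def.disc.Hhalf}, the second by the telescoping estimate above.

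To pass to a general polytopal $\dom$ (Section \ref{sec:extensions}), I would decompose $w_h$ across the faces of $\partial\dom$ using a discrete $\{0,1\}$-valued partition of unity whose transition strip is thin enough to be absorbed in $\tnorm{1/2,h}{\cdot}$, locally straighten each face of $\partial\dom$ to reduce to the flat case, and sum the resulting lifts. The boundedness of the cutoff and gluing in $\tnorm{1/2,h}{\cdot}$ relies on the fact that the kernel $|x_f-x_{f'}|^{-d}$ already penalises interactions between separated regions of $\partial\dom$. The main obstacle of the whole argument is the combinatorial double-counting at the core of the flat case: proving that, for every fixed pair $(f,f')\in\FFhb$, the cells whose averaging set contains both $f$ and $f'$ contribute a total scaled weight bounded by $|f|_{d-1}|f'|_{d-1}/|x_f-x_{f'}|^d$. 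This is the discrete analogue of the Whitney-type decomposition underlying continuous Stein extensions, and its careful implementation on quasi-uniform polytopal meshes is precisely what necessitates the technical results of Section \ref{sec:proof.technical.lifting}.
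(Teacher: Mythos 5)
Your overall architecture matches the paper's: define $v_t$ as a convex average of boundary-face means $\ol{w}_f$ over faces $f$ within distance $\simeq\delta_t$ of the foot of $t$, prove the flat (cube) case first, then glue local liftings via an atlas of $\partial\dom$. The weights you propose ($\alpha_{t,f}\simeq |f|_{d-1}/z_t^{d-1}$ supported on a set $\N_t$) are the paper's $\rho_t(f)$ and $A_t$. Two incidental differences are harmless: the paper sets $v_g=(v_t+v_{t'})/2$ on interior faces $g$ rather than a fresh average from a set $\N_g$, and your remark about an inverse estimate on $\nabla v_t$ is vacuous because the cell components of the lifting are constants.

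However, there is a genuine gap at the core of the estimate. You expand
\begin{equation*}
v_t - v_{t'} = \sum_{f\in\N_t}\sum_{f'\in\N_{t'}}\alpha_{t,f}\alpha_{t',f'}\,(\ol{w}_f-\ol{w}_{f'}),
\end{equation*}
a correct identity, and then bound each term using only $\alpha_{t,f}\lesssim|f|_{d-1}/\delta_t^{d-1}$. This throws away the cancellation between $v_t$ and $v_{t'}$: the two averages are taken over nearly identical sets $\N_t\simeq\N_{t'}$ with nearly identical weights, and the differences $\alpha_{t,f}-\alpha_{t',f}$ are small (of relative order $h/\delta_t$). Without this, the claimed sum of scalars does \emph{not} come out as $\lesssim\dffp^{-d}$. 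Concretely, for a fixed pair $(f,f')$, cells at height $\ell h$ with both $f\in\N_t$ and $f'\in\N_{t'}$ number $\simeq\ell^{d-1}$, each contributing $h^{d-2}\alpha_{t,f}\alpha_{t',f'}\simeq |f|_{d-1}|f'|_{d-1}/(\ell^{2d-2}h^d)$. Summing over $\ell\gtrsim\dffp/h$ gives $\simeq |f|_{d-1}|f'|_{d-1}/(h^2\dffp^{d-2})$ for $d\ge3$ (and a logarithmically divergent sum for $d=2$). This exceeds the target $|f|_{d-1}|f'|_{d-1}/\dffp^d$ by a factor $(\dffp/h)^2$, which is unbounded as $h\to 0$.

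The paper avoids this by instead writing $v_{t'}-v_t=\sum_f(\rho_{t'}(f)-\rho_t(f))(\ol{w}_f-\ol{w}_{\projFace(g)})$, where $\projFace(g)$ is a \emph{single fixed} boundary face at the foot of the interior face $g$ shared by $t,t'$. The key technical inputs are Lemma \ref{lem:At}\ref{lem:At.3}, giving $|\rho_{t'}(f)-\rho_t(f)|\lesssim h^d/\delta_g^d$ whenever $f\subset A_t\cap A_{t'}$ (an extra $h/\delta_g$ compared to $\rho_t(f)$ itself), the estimate $|\Delta_g|_{d-1}\lesssim h\delta_g^{d-2}$ on the symmetric difference $\Delta_g=A_t\Delta A_{t'}$ (Lemma \ref{lem:At}\ref{lem:At.2}), and the combinatorial Lemma \ref{lem:Qff} which sums the resulting contributions over $g\in\projFace^\dagger(f')$ to produce exactly $|f|_{d-1}|f'|_{d-1}/\dffp^d$. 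Your proposal names the right combinatorial goal but offers no mechanism to extract the $h/\delta_g$ gain that makes it achievable; to repair the argument you must either adopt the paper's decomposition with a fixed reference face, or otherwise exploit $\alpha_{t,\cdot}\approx\alpha_{t',\cdot}$ at the level of individual weights. A secondary issue: the $\{0,1\}$-valued partition of unity you propose for the polytopal gluing would not support the estimate $|\lproj{t}{0}\eta_i-\lproj{f}{0}\eta_i|\lesssim h_t$ used in Proposition \ref{prop:lifting.reconstructed}; the paper uses a smooth partition of unity for this reason.
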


\begin{remark}[Scaling]\label{scaling.dom}
  It can be checked that, if $\dom$ is scaled by a factor $\mu$, both the discrete $H^{1/2}(\partial \dom)$ and $H^{1}(\dom)$-seminorms scale with the factor $\mu^{d-2}$. Thus, the hidden constants in the trace inequality and lifting property stated above do not depend on $\mathrm{diam}(\dom)$ under rescaling, and we can assume that $\mathrm{diam}(\dom)$ is of unit size.
\end{remark}

\begin{remark}[Extension to more general hybrid spaces]
We show in Section \ref{sec:general.hybrid.space} that Theorems \ref{thm:trace} and \ref{thm:lifting} directly provide a trace inequality and a lifting property in more general hybrid spaces, which cover a wide range of polytopal methods.
\end{remark}

\begin{remark}[Discrete trace theory in non-Hilbertian spaces]
We develop in this paper the discrete trace theory in the Hilbertian setting, but the proofs are only based on Cauchy--Schwarz inequalities and counting or measuring various sets of faces and cells. As a consequence, they can be easily adapted to the $W^{1,p}$ setting with $p\in (1,\infty]$.
Theorems \ref{thm:trace} and \ref{thm:lifting} then hold simply by replacing the discrete $H^1(\Omega)$-seminorm \eqref{eq:def.disc.H1} by the discrete $W^{1,p}(\Omega)$-seminorm
\[
 \seminorm{1,p,h}{\ul{v}_h}\doteq \left( \sum_{t\in\Th}\norm{L^p(t)}{\nabla v_t}^p+\sum_{f\in\mathcal{F}_t}h_t^{1-p}\norm{L^p(f)}{v_f-v_t}^p\right)^{1/p}
 \]
(see \cite[Eq.~(6.9)]{di-pietro.droniou:2020:hybrid}) and the discrete $H^{1/2}(\partial\Omega)$-seminorm \eqref{eq:def.disc.Hhalf} by the discrete $W^{1-\frac1p,p}(\partial\Omega)$-seminorm 
\[
 \tnorm{1-\frac1p,h}{w_h} \doteq \left(\sum_{f\in\Fhb} h_f^{1-p} \norm{L^p(f)}{w_f - \ol{w}_f}^p + \sum_{(f,f')\in\FFhb} |f|_{d-1} |f'|_{d-1} \frac{|\ol{w}_f - \ol{w}_{f'}|^p}{\dffp^{d+p-2}}\right)^{1/p}
\]
(both seminorms are written here for $p<\infty$, the changes for $p=\infty$ being classical).
\end{remark}

\section{Discrete trace inequality on the side of a cube}\label{sec:proof.trace.cube}

The proof of Theorem \ref{thm:trace} consists in adapting to the discrete setting the arguments in \cite{Droniou_intsob:01} for the continuous trace inequality. In Section \ref{sec:trace.continuous}, we briefly recall these arguments in the case where $\Omega$ is a half-space, to make the technical proof in the discrete setting easier to follow. In Section \ref{sec:proof.trace}, we establish the discrete trace inequality when $\Omega$ is a cube and the trace is restricted to one of its sides. The case of a polytopal domain and trace on the complete boundary $\partial\dom$ is handled in Section \ref{sec:trace.lift.polytopalcase}, by combining the case of a cube and localisation arguments.

\subsection{Continuous trace inequality}\label{sec:trace.continuous}

We consider here that $\Omega=\mathbb{R}^{d-1}\times (0,\infty)$ is a half-space, we take $u$ a continuously differentiable function with compact support in $\mathbb{R}^{d-1}\times [0,\infty)$, and we establish the following trace inequality on the side of $\Omega$:
\begin{equation}\label{trace:continuous}
  \mathcal S\doteq\int_{\mathbb{R}^{d-1}}\int_{\mathbb{R}^{d-1}}\frac{|u(x',0)-u(y',0)|^2}{|x'-y'|^{d}}\,dx'dy'\lesssim \int_\Omega |\nabla u|^2.
\end{equation}

The idea is to estimate $u(x',0)-u(y',0)$ by going ``inside'' $\Omega$, at a distance $\ell\doteq |x'-y'|$ from $(x',0)$ and $(y',0)$, and by writing this
difference as a sum of three differences:
\begin{equation}
u(x',0)-u(y',0)=(u(x',0)-u(x',\ell))+(u(x',\ell)-u(y',\ell))+(u(y',\ell)-u(y',0)).
\label{continuous:decomp.u.minus.u}
\end{equation}
Taking the absolute value, integrating over $x',y'$ and using triangle inequalities, we deduce $\mathcal S\lesssim \mathcal S_1+\mathcal S_2+\mathcal S_3$ with
\begin{equation}\label{continuous:S1S2S3}
\begin{aligned}
\mathcal S_1\doteq &\int_{\mathbb{R}^{d-1}}\int_{\mathbb{R}^{d-1}}\frac{|u(x',0)-u(x',\ell)|^2}{|x'-y'|^{d}}\,dx'dy'\\
\mathcal S_2\doteq &\int_{\mathbb{R}^{d-1}}\int_{\mathbb{R}^{d-1}}\frac{|u(x',\ell)-u(y',\ell)|^2}{|x'-y'|^{d}}\,dx'dy'\\
\mathcal S_3\doteq &\int_{\mathbb{R}^{d-1}}\int_{\mathbb{R}^{d-1}}\frac{|u(y',\ell)-u(y',0)|^2}{|x'-y'|^{d}}\,dx'dy'=\mathcal S_1,
\end{aligned}
\end{equation}
the last equality coming from the symmetric roles of $x',y'$ in these integrals. We therefore only have to prove that $\mathcal S_1$
and $\mathcal S_2$ are bounded by the right-hand side of \eqref{trace:continuous} to conclude.

\medskip

\textit{\ul{Bound on $\mathcal{S}_1$}}: We start with a polar change of variable $y'=x'+\ell \xi\to (\ell,\xi)$ (see \cite[Chapter 6, (9)]{Stein-Elias-2005-Book}) to get
\begin{equation}\label{eq:S1.polar}
\mathcal S_1= \int_0^\infty\int_{\mathbb S^{d-2}}\int_{\mathbb{R}^{d-1}}\frac{|u(x',0)-u(x',\ell)|^2}{\ell^2}\,dx' d\xi d\ell,
\end{equation}
where $\mathbb S^{d-2}$ is the unit sphere in $\mathbb R^{d-1}$. We then substitute 
\begin{equation}\label{eq:diff.vert}
|u(x',0)-u(x',\ell)|\le \int_0^\ell |\partial_d u(x',s)|\,ds
\end{equation}
to obtain
\begin{align}
\mathcal S_1\le{}& \int_0^\infty|\mathbb S^{d-2}|_{d-2}\int_{\mathbb{R}^{d-1}}\left|\frac{1}{\ell}\int_0^\ell |\partial_d u(x',s)|\,ds\right|^2\,dx' d\ell\nonumber\\
={}&\int_0^\infty|\mathbb S^{d-2}|_{d-2}\Norm{L^2(\mathbb R^{d-1})}{\frac{1}{\ell}\int_0^\ell |\partial_d u(\cdot,s)|\,ds}^2.
\label{eq:S1.polar.2}
\end{align}
Invoking the classical formula 
\begin{equation}\label{eq:norm.integral}
\Norm{L^2(\mathbb{R}^{d-1})}{\int_0^\ell f(\cdot,s)\,ds}\le \int_0^\ell \norm{L^2(\mathbb{R}^{d-1})}{f(\cdot,s)}\,ds,
\end{equation}
we infer
\begin{equation}\label{eq:S1.polar.3}
\mathcal S_1\le |\mathbb S^{d-2}|_{d-2}\int_0^\infty\left(\frac{1}{\ell}\int_0^\ell\norm{L^2(\mathbb R^{d-1})}{\partial_d u(\cdot,s)}\,ds\right)^2.
\end{equation}
Let $F_1(\ell)= \frac{1}{\ell} \int_0^\ell f_1(s)\,ds$ with $f_1(s)=\norm{L^2(\mathbb{R}^{d-1})}{\partial_d u(\cdot,s)}$.
The integral Hardy inequality gives 
\[
\norm{L^2(0,\infty)}{F_1}\lesssim \norm{L^2(0,\infty)}{f_1}=\norm{L^2(\Omega)}{\partial_d u},
\]
which yields the desired estimate on $\mathcal S_1$:
\begin{equation*}
\mathcal S_1\le  |\mathbb S^{d-2}|_{d-2}\norm{L^2(0,\infty)}{F_1}^2\lesssim \norm{L^2(\Omega)}{\partial_d u}^2.
\end{equation*}

\medskip

\textit{\ul{Bound on $\mathcal{S}_2$}}: Recalling that $y'=x'+\ell\xi$ with $\xi$ unit vector, we use the Cauchy--Schwarz inequality to write
\begin{equation}\label{eq:diff.hor}
|u(x',\ell)-u(y',\ell)|^2\le \left(\int_0^\ell |\nabla_{x'} u(x'+s\xi,\ell)|\,ds\right)^2\le \ell \int_0^\ell |\nabla_{x'} u(x'+s\xi,\ell)|^2\,ds.
\end{equation}
Plugging this into the definition of $\mathcal S_2$ and applying the polar change of variable $y'=x'+\ell \xi\to (\ell,\xi)$ yields
\begin{align}
\label{eq:S2.polar.1}
\mathcal S_2={}&\int_0^\infty\int_{\mathbb S^{d-2}}\int_{\mathbb{R}^{d-1}}\frac{|u(x',\ell)-u(x'+\ell\xi,\ell)|^2}{\ell^2}\,dx'd\xi d\ell\\
\label{eq:S2.polar.2}
\le{}&\int_0^\infty\int_{\mathbb S^{d-2}}\int_{\mathbb{R}^{d-1}}\frac{1}{\ell}\int_0^\ell |\nabla_{x'} u(x'+s\xi,\ell)|^2\,dsdx'd\xi d\ell\\
\label{eq:S2.polar.3}
={}&\int_0^\infty\int_{\mathbb S^{d-2}}\int_{\mathbb{R}^{d-1}} |\nabla_{x'} u(x',\ell)|^2\,dx'd\xi d\ell,
\end{align}
where the conclusion follows applying a change of variable $x'+s\xi\to x'$ in the integral over $x'$, and noticing that the resulting integrand no longer depends on $s$. Since the integrand do not depend on $\xi$, we infer that $\mathcal S_2\le |\mathbb S^{d-2}|_{d-2}\norm{L^2(\Omega)}{\nabla_{x'} u}^2$, which is the expected bound.

\subsection{Notations}\label{sec:notations.trace}

In the rest of Section \ref{sec:proof.trace.cube}, we consider that $\Omega$ is a unit cube, and we only deal with traces of discrete functions on one of its sides. Consequently, $\partial\Omega$ only refers to this side, and $\Fhb$ (resp.~$\FFhb$) only to the mesh faces (resp.~pairs of distinct faces) on that side.
The proof of the discrete trace inequality relies on various partitions of the sets of faces and cells, which we introduce here together with some general notations.

For $x\in \dom$ we denote by $p(x)$ the orthogonal projection of $x$ on $\partial \dom$. If $Z$ is a face or cell of the mesh, we write $x_Z$ for the centroid of $Z$. We will usually denote a cell in $\Th$ with $t$, a boundary face in $\Fhb$ with $f$ and an interior face in $\Fhi$ with $\fint$. This distinct notation for interior of boundary faces is used to enhance the legibility of the proofs.

For $l\ge 1$, the set of pairs of boundary faces that are \emph{approximately at distance $lh$ of each other} is:
\begin{equation}\label{Wl}
  \begin{aligned}
    \W_l \doteq& \{(\fbou ,\fbou' ) \in \FFhb: x_{\fbou'} \in \An{r_1}{r_2}{x_{\fbou }} \; \mbox{with} \; r_1 = lh \; \mbox{and} \; r_2 = (l-1) h \}\\
    =& \{(\fbou ,\fbou' )\in\FFhb:(l-1)h\le \dffp <lh\},
  \end{aligned}
\end{equation}
where $\An{r_1}{r_2}{x}$ is the annulus on $\partial \dom$ centered at $x$ and with radii $r_1$ and $r_2$ (such that $r_1 > r_2$). 
For a fixed $\fbou \in\Fhb$, the slice of $\W_l$ at $\fbou $ (i.e., the faces $\fbou'$ that are \emph{approximately at distance $lh$ of $f$}) is
\begin{align}\label{Rfl}
  \W_{l\fbou} \doteq \{\fbou' \in \Fhb: (\fbou ,\fbou' )\in\W_l\}.
\end{align}
In the same way as we used polar changes of variables, in Section \ref{sec:trace.continuous}, to gather integrals over $y'$ according to the distance $\ell=|x'-y'|$ of $y'$ to a given $x'$, the set $\W_{l\fbou}$ will allow us to gather sums over $\fbou'$ according to their approximate distance $\simeq lh$ from a given face $\fbou$.

For each $\fbou \in\Fhb$, we denote the set of cells \emph{at the vertical of $\fbou$} by
\begin{equation}\label{def:set.above}
  \Above_{\fbou }=\{t\in\Th:\text{$x_f+\mathbb{R}^+\mathbf{n}_{\partial \dom}$ intersects $t$}\},
\end{equation}
where $\mathbf{n}_{\partial \dom}$ is the unit normal to $\partial\dom$ pointing inside $\dom$.

If $t\in\Th$, let $\delta_t=|x_t-p(x_t)|$ be the distance between $x_t$ and $\partial \dom$.
The set $\mathcal{T}_h$ is partitioned into layers according to this distance: for $m\ge 0$, the $m$-th layer is: 
\begin{equation}\label{set:Layers}
  \La_{m} = \{ t \in \Th: m h \leq \delta_t < (m+1) h\}.
\end{equation}
Summing over these layers will give us a discrete equivalent of integrating in the direction orthogonal to $\partial\dom$, as in \eqref{eq:diff.vert}.

Take $l\ge 1$ and let $(\fbou ,\fbou' )\in\W_l$. The set of cells \emph{between $f$ and $f'$ at height $\dffp$} is defined by (see also Figure \ref{fig:Slicing-6-7}-(A))
\begin{align}\label{set:Iffp}
  \Iffp \doteq \{ t \in \Th: t \mbox{ is intersected by the segment } [x_f + \dffp \textbf{n}_{\partial \Omega}, x_{f'} + \dffp \textbf{n}_{\partial \Omega}] \}.
\end{align}
If $t\in\Iffp$ then $|\dffp-\delta_t|\le h$. So, by \eqref{Wl}, $(l-2)h\le \delta_t\le (l+1)h$ and thus
\begin{align}\label{Height.Ca}
  \Iffp\subset \La_{l-2}\cup\La_{l-1}\cup\La_l.
\end{align}
For $s = 1, \dots, l+1$, the set 
\begin{align}\label{set:HorizontalLayers}
  \Ca = \left\{t \in \Iffp:  (s-1) h \leq |p(x_t)-x_f| < s h\right\},
\end{align}
collects the cells in $\Iffp$ according to their distance to $f$ parallel to $\partial \dom$ (see Figure \ref{fig:Slicing-6-7}-(B)). Summing over $s$ quantities of the form $\sum_{t\in\Ca}\bullet$ will give us a discrete equivalent of integrating in a parallel direction to $\partial\dom$, as in \eqref{eq:diff.hor}. 

We note that, if $t\in\Iffp$ then $x_t$ is within distance $h$ of $[x_f+\dffp\mathbf{n}_{\partial \dom},x_{f'}+\dffp\mathbf{n}_{\partial \dom}]$ and thus, after projection on $\partial \dom$, $p(x_t)$ is within distance $h$ of $[x_f,x_{f'}]$. As a consequence, $|p(x_t)-x_f|\le \dffp+h< (l+1)h$ since $(f,f')\in\W_l$. Hence, $(\Ca)_{s = 1, \dots, l+1}$ partitions $\Iffp$:
\begin{equation}\label{eq:Iffp.in.Ca}
  \Iffp = \bigsqcup_{s=1}^{l+1}\Ca.
\end{equation}

\begin{figure}[h]%
  \begin{tabular}{cc}
    {\includegraphics[width=.45\linewidth]{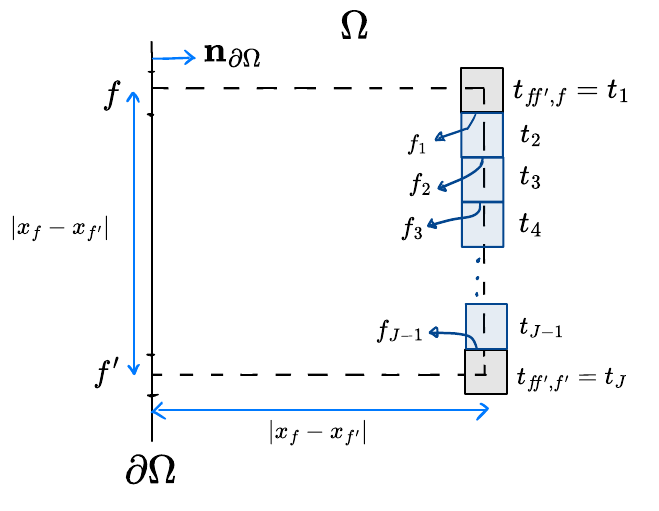} } &
    {\includegraphics[width=.45\linewidth]{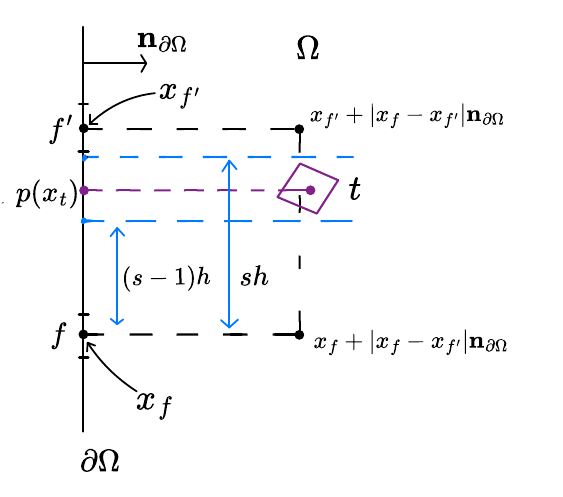} }\\
    (A) & (B)
  \end{tabular}
  \caption{(1) Cells between two boundary faces at a certain height ($t_1,t_2,\ldots,t_J$ are the cells in $\Iffp$). (B) Partitioning of these cells ($t$ is an example of a cell in $\Ca$).}
  \label{fig:Slicing-6-7}
\end{figure}

\subsection{Proof of the discrete trace inequality on the side of a cube}\label{sec:proof.trace}

To prove \eqref{eq:trace}, we need to handle the two terms appearing in the discrete $H^{1/2}$-seminorm \eqref{eq:def.disc.Hhalf}. Section \ref{sec:trace.local.contributions} deals with the first one, measuring the local variations of polynomials inside each face, while Section \ref{sec:trace.long.range} covers the other term, measuring the long-range interactions between averages. In several places, estimating these terms requires technical lemmas; these lemmas are stated when needed, and their proofs are postponed to Section \ref{sec:proof.technical.trace}
(in which the lemmas are also re-stated for ease of reference).

\subsubsection{Local contributions}\label{sec:trace.local.contributions}

In this section, we prove the following bound on the first term appearing in the $H^{1/2}$-seminorm of the trace of hybrid functions:
  \begin{equation}\label{eq:trace.local.contributions}
  \sum_{f \in \Fhb} h_f^{-1} \norm{L^2(f)}{v_f - \ol{v}_f}^2 \lesssim \seminorm{1,h}{\ul{v}_h}^2 \qquad \forall \ul{v}_h \in \Uh.
  \end{equation}

The local contribution $h_f^{-1} \norm{L^2(f)}{v_f - \ol{v}_f}^2$ to the discrete $H^{1/2}$-seminorm  measures the variation in $f$ of the hybrid function; it does not have any equivalent at the continuous level, since this scale is not directly apparent in the integrals. We bound this contribution using the local discrete $H^1$-norm $\seminorm{1,t}{\ul{v}_t}$, where $t$ is the cell neighbouring $f$, by considering two components: the internal (high-frequency at the scale of the cell) variations in $t$, and the difference between the lowest-order projections in $t$ and $f$ (low frequency at the scale of the cell). The following lemma gives a bound on the latter quantity.

\begin{restatable}{lemma}{lemlocalapprox}\label{lem:local.tf-approximation}
  Let $t\in\Th$ and $\ul{v}_t =  (v_t,(v_f)_{f\in\mathcal{F}_t}) \in \mathbb{P}_k(t)\times \bigtimes_{f \in \mathcal F_t} \mathbb{P}_k(f)$. Then, for all $f\in\mathcal F_t$,
  \begin{align}
    \norm{L^2(f)}{\lproj{f}{0} v_f - \lproj{t}{0} v_t} &\lesssim h_t^{\frac{1}{2}} \seminorm{1,t}{\ul{v}_t}, \label{lem:local-tf-approximation.a} \\
    |\lproj{f}{0} v_f - \lproj{t}{0} v_t| &\lesssim h_t^{\frac{2-d}{2}} \seminorm{1,t}{\ul{v}_t}, \label{lem:local-tf-approximation.b}
  \end{align}
  where, for $X\in\{t,f\}$, $\lproj{X}{0}: L^2(X) \rightarrow \mathbb{P}_0(X)$ is the $L^2$-orthogonal projection onto constant functions in $X$.
\end{restatable}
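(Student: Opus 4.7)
The plan is to prove (a) first and then to deduce (b) by a simple scaling argument. Since $\lproj{f}{0}v_f - \lproj{t}{0}v_t$ is a constant on $f$ and mesh regularity (Assumption \ref{assum:reg.mesh}) yields $|f|_{d-1}\simeq h_t^{d-1}$, we have
\[
|\lproj{f}{0}v_f - \lproj{t}{0}v_t|\simeq h_t^{-(d-1)/2}\,\norm{L^2(f)}{\lproj{f}{0}v_f - \lproj{t}{0}v_t},
\]
which turns the $h_t^{1/2}$ factor in (a) into the $h_t^{(2-d)/2}$ factor in (b); hence it suffices to establish (a).

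To prove (a), I would exploit that $c\doteq \lproj{t}{0}v_t$ is a constant and therefore $\lproj{f}{0}c = c$, and then insert $v_t$ as an intermediary on $f$:
\[
\lproj{f}{0}v_f - \lproj{t}{0}v_t = \lproj{f}{0}(v_f - v_t) + \lproj{f}{0}(v_t - \lproj{t}{0}v_t).
\]
Applying the $L^2(f)$-triangle inequality together with the $L^2$-contractivity of $\lproj{f}{0}$ reduces the estimate to separately controlling $\norm{L^2(f)}{v_f - v_t}$ and $\norm{L^2(f)}{v_t - \lproj{t}{0}v_t}$. The former is immediate from the definition \eqref{eq:def.disc.H1} of the local seminorm, which directly yields $\norm{L^2(f)}{v_f - v_t}\le h_t^{1/2}\seminorm{1,t}{\ul{v}_t}$.

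For the latter, I would combine the standard discrete trace inequality for polynomials on a mesh-regular polytope, $\norm{L^2(f)}{w}\lesssim h_t^{-1/2}\norm{L^2(t)}{w}$ for $w\in\mathbb{P}_k(t)$, applied to $w = v_t - \lproj{t}{0}v_t$, with the Poincaré--Wirtinger estimate $\norm{L^2(t)}{v_t - \lproj{t}{0}v_t}\lesssim h_t\norm{L^2(t)}{\nabla v_t}$; both are classical consequences of Assumption \ref{assum:reg.mesh}. Chained together, these give $\norm{L^2(f)}{v_t - \lproj{t}{0}v_t}\lesssim h_t^{1/2}\norm{L^2(t)}{\nabla v_t}\le h_t^{1/2}\seminorm{1,t}{\ul{v}_t}$, which completes (a). I do not anticipate any real obstacle here: the argument is essentially a careful bookkeeping of the correct powers of $h_t$ using tools that are standard under mesh regularity, and the only subtlety is recognising that the constant nature of $\lproj{f}{0}v_f - \lproj{t}{0}v_t$ is what makes (a) and (b) equivalent.
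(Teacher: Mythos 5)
Your proof is correct and follows essentially the same route as the paper: both exploit that $\lproj{f}{0}$ fixes the constant $\lproj{t}{0}v_t$, reduce via the triangle inequality and $L^2(f)$-contractivity to bounding $\norm{L^2(f)}{v_f-v_t}$ and $\norm{L^2(f)}{v_t-\lproj{t}{0}v_t}$, and deduce \eqref{lem:local-tf-approximation.b} from \eqref{lem:local-tf-approximation.a} by the same scaling observation. The only cosmetic difference is that you unpack the bound $\norm{L^2(f)}{v_t-\lproj{t}{0}v_t}\lesssim h_t^{1/2}\norm{L^2(t)}{\nabla v_t}$ into a discrete trace inequality plus Poincar\'e--Wirtinger, whereas the paper cites it directly as the approximation property of $\lproj{t}{0}$.
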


To prove \eqref{eq:trace.local.contributions}, take $f \in \Fhb$ and $t\in\Th$ such that $f \in\mathcal F_t$, and use triangle inequalities to write
\begin{align*}
  \norm{L^2(f)}{v_f - \ol{v}_f}^2 &\lesssim \norm{L^2(f)}{v_f - v_t}^2 + \norm{L^2(f)}{v_t - \lproj{t}{0} v_t}^2 + \norm{L^2(f)}{\lproj{t}{0} v_t - \lproj{f}{0} v_f}^2 \nonumber\\
  \overset{\eqref{lem:local-tf-approximation.a}}&\lesssim \norm{L^2(f)}{v_f - v_t}^2 + h_t \norm{L^2(t)}{\nabla v_t}^2 + h_t \seminorm{1,t}{\ul{v}_t}^2\\
  &\lesssim h_f \seminorm{1,t}{\ul{v}_t}^2.
\end{align*}
In the second inequality, we have additionally invoked the approximation property of $\lproj{t}{0}$ (see \cite[Theorem 1.45]{di-pietro.droniou:2020:hybrid}), while the conclusion comes from the definition \eqref{eq:def.disc.H1} of $\seminorm{1,t}{{\cdot}}$ together with the mesh regularity property that yields $h_t\lesssim h_f$. The bound \eqref{eq:trace.local.contributions} follows by dividing by $h_f$ and summing over $f$ (noticing that this implies, in the right-hand side, a sum over boundary cells $t$, which is smaller than a sum over all cells).

\subsubsection{Long-range term}\label{sec:trace.long.range}

In this section, we prove the bound on the second term in the discrete $H^{1/2}$-seminorm of the trace of hybrid functions:
\begin{equation}\label{eq:trace.long.range}
  \mathcal S\doteq\sum_{(f,f')\in\FFhb} |f|_{d-1} |f'|_{d-1} \frac{|\ol{v}_f - \ol{v}_{f'}|^2}{\dffp^d} \lesssim \seminorm{1,h}{\ul{v}_h}^2 \qquad \forall \ul{v}_h \in \Uh.
\end{equation}

For each pair $(f,f')\in\FFhb$, we select cells $t_{\ffp,f}\in\Above_f$ and $t_{\ffp,f'}\in\Above_{f'}$ that respectively contain $x_f+\dffp\mathbf{n}_{\partial \dom}$ and $x_{f'}+\dffp\mathbf{n}_{\partial \dom}$ (see Figure \ref{fig:Slicing-6-7}-(A)) and
we rewrite the difference $\ol{v}_f - \ol{v}_{f'}$ in a similar way as \eqref{continuous:decomp.u.minus.u} at the continuous level:
\begin{align*}
  \ol{v}_f - \ol{v}_{f'} = \ol{v}_f - \ol{v}_{t_{\ffp,f}} + \ol{v}_{t_{\ffp,f}} - \ol{v}_{t_{\ffp,f'}} + \ol{v}_{t_{\ffp,f'}} - \ol{v}_{f'},
\end{align*}
where $\ol{v}_t = \lproj{t}{0} v_t$.
Plugging this into $\mathcal S$ and using triangle inequalities leads to $\mathcal{S} \lesssim \mathcal{S}_1 + \mathcal{S}_2 + \mathcal{S}_3$, these quantities being the discrete versions of \eqref{continuous:S1S2S3}:
\begin{align*}
  \mathcal{S}_1 &\doteq \sum_{(f,f')\in\FFhb} |f|_{d-1} |f'|_{d-1} \frac{|\ol{v}_f - \ol{v}_{t_{\ffp,f}}|^2}{\dffp^d}, \\
  \mathcal{S}_2 &\doteq \sum_{(f,f')\in\FFhb} |f|_{d-1} |f'|_{d-1} \frac{|\ol{v}_{t_{\ffp,f}} - \ol{v}_{t_{\ffp,f'}}|^2}{\dffp^d}, \\
  \mathcal{S}_3 &\doteq \sum_{(f,f')\in\FFhb} |f|_{d-1} |f'|_{d-1} \frac{|\ol{v}_{f'} - \ol{v}_{t_{\ffp,f'}}|^2}{\dffp^d}=\mathcal S_1,
\end{align*}
the last equality following from the symmetric role that $f$ and $f'$ play. The bound \eqref{eq:trace.long.range} therefore follows if we can show that $\mathcal S_1\lesssim \seminorm{1,h}{\ul{v}_h}^2$ and $\mathcal S_2\lesssim \seminorm{1,h}{\ul{v}_h}^2$.

\medskip

\textit{\ul{Bound on $\mathcal{S}_1$}}: 
For $(f,f')\in\W_l$, considering the cells and faces intersected by the segment $[x_f, x_f+\dffp\mathbf{n}_{\partial \dom}]$ (see Figure \ref{fig:Slicing-4-5}), we rewrite the difference $\ol{v}_f - \ol{v}_{t_{\ffp,f}}$ as follows:
$$
  \ol{v}_f - \ol{v}_{t_{\ffp,f}} = \ol{v}_f - \ol{v}_{t_1} + \ol{v}_{t_1} - \ol{v}_{f_1} + \ol{v}_{f_1} - \ol{v}_{t_2} + \dots + \ol{v}_{f_{N-1}} - \ol{v}_{t_N}.
$$
\begin{figure}%
    \includegraphics[width=.7\linewidth]{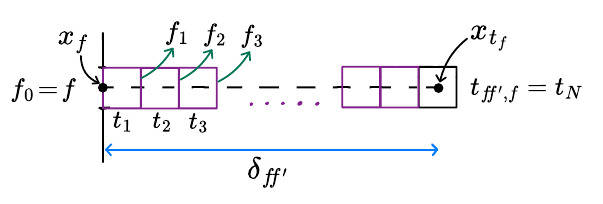} 
  \caption{Cells between a boundary face $f$ and the cell $t_{\ffp,f}$ at distance $\dffp$ above $f$.}
  \label{fig:Slicing-4-5}%
\end{figure}
The bound \eqref{lem:local-tf-approximation.b} then implies
\begin{equation*}
  |\ol{v}_f - \ol{v}_{t_{\ffp,f}}| \leq \sum_{i=0}^{N-1} |\ol{v}_{f_i} - \ol{v}_{t_{i+1}}| + \sum_{i=1}^{N-1} |\ol{v}_{t_{i}} - \ol{v}_{f_{i}}|
  \lesssim \sum_{i=1}^{N} h_{t_i}^{\frac{2-d}{2}} \seminorm{1,t_{i}}{\ul{v}_{t_{i}}}.
\end{equation*}
We now split this sum according to the distance of $t$ from $\partial\dom$, as represented by the layers $(\La_m)_m$.
If $t$ is intersected by $[x_f, x_f+\dffp\mathbf{n}_{\partial \dom}]$ then $t\in\Above_f$ and, since $\dffp<lh$, we have $\delta_t\le (l+1)h$ and thus $t\in\bigcup_{m = 0}^{l} \La_m$. We thus obtain the equivalent of \eqref{eq:diff.vert}, with $m$ playing the role of $s$:
\begin{equation}\label{eq:est.vf.vtf}
  |\ol{v}_f - \ol{v}_{t_{\ffp,f}}| \lesssim \sum_{m = 0}^{l} \underbrace{ \sum_{t\in\La_m\cap\Above_f} h_t^{\frac{2-d}{2}}\seminorm{1,t}{\ul{v}_{t}}}_{\doteq a^{(m)}_f}.	  
\end{equation}
The mapping 
$$
\chi_l: (s_f)_{f \in \Fhb}\in\mathbb R^{\Fhb} \longrightarrow \left(\sum_{(f,f') \in \W_{l}} \frac{|f|_{d-1} |f'|_{d-1}}{\dffp^d} |s_f|^2\right)^{1/2}
$$
is a norm and, by \eqref{eq:est.vf.vtf},
\begin{align}
  \zeta_l &\doteq \left(\sum_{(f,f') \in \W_{l}} |f|_{d-1} |f'|_{d-1} \frac{|\ol{v}_f - \ol{v}_{t_{\ffp,f}}|^2}{\dffp^d}\right)^{1/2} \label{eq:zetal}\\
  &\lesssim \left(\sum_{(f,f') \in \W_{l}} \frac{|f|_{d-1} |f'|_{d-1}}{\dffp^d} \left(\sum_{m=0}^{l} a_f^{(m)}\right)^2\right)^{1/2} = \chi_l \left(\sum_{m=0}^{l} a^{(m)}\right), \label{eq:zetal2}
\end{align}
with $a^{(m)} = (a_f^{(m)})_{f \in \Fhb}$. Note that $\sum_{(f,f') \in \W_{l}}\bullet=\sum_{f\in\Fhb}\sum_{f'\in\W_{lf}}\bullet$, so the sum in \eqref{eq:zetal} can be interpreted as $\int_{\mathbb S^{d-2}}\int_{\mathbb R^{d-1}}$ in \eqref{eq:S1.polar} (the integral over the directions on the sphere being less obvious than in the continuous case because of the discrete mesh structure); consequently, the term inside the bracket in \eqref{eq:zetal2} is the discrete equivalent of \eqref{eq:S1.polar.2}.
The triangle inequality on the norm $\chi_l$ is the equivalent of the continuous formula \eqref{eq:norm.integral}:
$$
  \chi_l \left(\sum_{m=0}^{l} a^{(m)}\right) \leq \sum_{m=0}^{l} \chi_l \left(a^{(m)}\right).
$$
Plugged into \eqref{eq:zetal2}, this gives
\begin{equation}\label{eq:bound.zetal}
  \zeta_l \lesssim \sum_{m=0}^{l} \chi_l \left(a^{(m)}\right) = \sum_{m=0}^{l} \left(\sum_{(f,f') \in \W_{l}} \frac{|f|_{d-1} |f'|_{d-1}}{\dffp^d} |a_f^{(m)}|^2\right)^{1/2}.
\end{equation}
The Cauchy-Schwarz inequality gives, for any family $(\alpha_i)_{i\in I}$ with $\#I\lesssim 1$,
\begin{equation}\label{CSinRn}
  \Big(\sum_{i \in I} \alpha_i\Big)^2 \leq \#I \sum_{i \in I} \alpha_i^2 \lesssim \sum_{i \in I} \alpha_i^2.
\end{equation}

We will then need some estimates, gathered in the following lemma, on the cardinality of the various sets of faces introduced so far.

\begin{restatable}[Estimates on the cardinality of sets of faces]{lemma}{lemcardinality}\label{lem:Cardinality}
The following properties hold:
  \begin{enumerate}[label=(\roman*)]
    \item \label{lem:Lm.cardinality} For all $\fbou \in\Fhb$ and $m\ge 0$, $\#(\La_m\cap \Above_{\fbou })\lesssim 1$.
    \item \label{lem:Rfl.cardinality} For all $\fbou \in\Fhb$ and $l\ge 1$, $\# \W_{l \fbou } \lesssim l^{d-2}$.
    \item \label{lem:f.cardinality} For all $t\in\Th$, $\#\{\fbou \in\Fhb:t\in\Above_{\fbou }\}\lesssim 1$.
  \end{enumerate}
\end{restatable}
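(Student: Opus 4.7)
The plan is to prove all three counts by the same volume-comparison principle. Mesh regularity (the footnote on the centroid constants $\varpi$) combined with quasi-uniformity (Assumption~\ref{assum:reg.mesh}) guarantees that every cell $t\in\Th$ contains a $d$-ball of radius $\simeq h$ around $x_t$, so $|t|_d\gtrsim h^d$, and every boundary face $\fbou\in\Fhb$ contains a $(d-1)$-ball of radius $\simeq h$ around $x_{\fbou}$, so $|\fbou|_{d-1}\gtrsim h^{d-1}$. Since distinct cells (resp.\ distinct boundary faces) are essentially disjoint, so are their inscribed balls. Each of the three counts will therefore reduce to localising the set of interest inside an appropriate tube/annulus of explicit measure and dividing by $h^d$ or $h^{d-1}$.

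For (i) I will show every $t\in\La_m\cap\Above_{\fbou}$ fits inside a tube of $d$-measure $\lesssim h^d$. By definition of $\Above_{\fbou}$ there exists $y=x_{\fbou}+s\mathbf n_{\partial\dom}\in t$, and since in Section~\ref{sec:proof.trace.cube} $\dom$ is the unit cube and $\partial\dom$ one of its faces, the distance from $y$ to $\partial\dom$ is exactly $s$. Combining $|y-x_t|\le h_t\lesssim h$ with $\delta_t\in[mh,(m+1)h)$ then forces $s$ into an interval of length $\simeq h$ depending only on $m$. Hence $t\subset B(y,h)$ and the union of these enclosing balls lies in a cylinder of radius $\simeq h$ around a vertical segment of length $\simeq h$, whose $d$-measure is $\lesssim h^d$; the volume lower bound on cells and their disjointness give $\#(\La_m\cap\Above_{\fbou})\lesssim 1$. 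The argument for (iii) is the symmetric counterpart: $t\in\Above_{\fbou}$ forces $x_{\fbou}\in p(t)$, and since $t$ has diameter $\lesssim h$ the projection $p(t)$ lives in a $(d-1)$-ball of radius $\lesssim h$ on $\partial\dom$; the $(d-1)$-inscribed balls of the competing faces are then disjoint and crammed into a $(d-1)$-ball of radius $\lesssim h$, so their count is $\lesssim 1$.

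For (ii), the definition \eqref{Wl} identifies $\W_{l\fbou}$ with those $\fbou'\in\Fhb$ whose centroid lies in the $(d-1)$-annulus on $\partial\dom$ centred at $x_{\fbou}$ with radii $(l-1)h$ and $lh$. Its $(d-1)$-measure is $\simeq (lh)^{d-1}-((l-1)h)^{d-1}\simeq l^{d-2}h^{d-1}$ for $l\ge 2$, and $\simeq h^{d-1}\lesssim l^{d-2}h^{d-1}$ for $l=1$. The inscribed $(d-1)$-balls of the candidate faces are pairwise disjoint and, up to a mild enlargement of the annulus by a band of width $\simeq h$, fit inside a region of the same order of measure; dividing by $h^{d-1}$ yields $\#\W_{l\fbou}\lesssim l^{d-2}$.

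The three proofs are essentially geometric accounting and I do not expect a conceptual obstacle; the only care is in (i), where one must verify that the layer structure really does confine the tube to height $\simeq h$ (not $\simeq mh$), and in (ii), where the $l=1$ boundary case must be absorbed into the generic bound $l^{d-2}$ via the elementary inequality $1\le l^{d-2}$ valid for $l\ge 1$, $d\ge 2$.
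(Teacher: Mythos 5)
Your proof is correct and follows essentially the same volume-comparison strategy as the paper: localise the relevant cells or faces in a tube, annulus, or disc of controlled measure, then divide by the lower bound $\gtrsim h^d$ (resp.\ $\gtrsim h^{d-1}$) on the measure of each cell (resp.\ face) coming from quasi-uniformity. The only cosmetic difference is that you invoke disjoint inscribed balls to justify the count, whereas the paper directly uses the measures of the disjoint cells/faces themselves; these are interchangeable under Assumption~\ref{assum:reg.mesh}.
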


Using Lemma \ref{lem:Cardinality}-\ref{lem:Lm.cardinality} to invoke \eqref{CSinRn} in the definition of $a_f^{(m)}$, we find
$$
|a_f^{(m)}|^2 \lesssim \sum_{t\in\La_m\cap\Above_f}h_t^{2-d} \seminorm{1,t}{\ul{v}_{t}}^2.
$$
Plugged into \eqref{eq:bound.zetal}, this yields
\begin{align}\label{eq:zetal.1}
  \zeta_l \lesssim{}& \sum_{m=0}^{l} \left(\sum_{(f,f') \in \W_{l}} \frac{|f|_{d-1} |f'|_{d-1}}{\dffp^d} 
  \sum_{t\in\La_m\cap\Above_f}
  h_t^{2-d}\seminorm{1,t}{\ul{v}_{t}}^2\right)^{1/2} \nonumber\\
  \lesssim{}& \sum_{m=0}^{l} \left(\sum_{(f,f') \in \W_{l}} \frac{1}{l^d} 
  \sum_{t\in\La_m\cap\Above_f}
  \seminorm{1,t}{\ul{v}_{t}}^2\right)^{1/2} \nonumber\\
  ={}& \sum_{m=0}^{l} \left(\sum_{f \in \Fhb} \sum_{f' \in \W_{lf}} \frac{1}{l^d}
  \sum_{t\in\La_m\cap\Above_f}
  \seminorm{1,t}{\ul{v}_{t}}^2\right)^{1/2},
\end{align}
where we have used, in the second inequality, Assumption \ref{assum:reg.mesh} and $\dffp \simeq lh$ (by definition \eqref{Wl} of $\W_l$) to write
\begin{equation}\label{eq:ffp.dffp}
  \frac{|f|_{d-1} |f'|_{d-1}}{\dffp^d} \; h_t^{2-d}\lesssim \frac{h^{d-1}h^{d-1}}{(lh)^d}h^{2-d}=\frac{1}{l^d}.
\end{equation}
Now an application of Lemma \ref{lem:Cardinality}-\ref{lem:Rfl.cardinality} in \eqref{eq:zetal.1} yields
\begin{align*}
  \zeta_l \lesssim  \frac{1}{l}\sum_{m=0}^{l} \left(\sum_{f \in \Fhb} \sum_{t\in\La_m\cap\Above_f}
  \seminorm{1,t}{\ul{v}_{t}}^2\right)^{1/2}.
\end{align*}
Rearranging the sums and using Lemma \ref{lem:Cardinality}-\ref{lem:f.cardinality} leads to
\begin{equation}\label{eq:zeta.pre.hardy}
\zeta_l \lesssim \frac{1}{l} \sum_{m=0}^{l} \left(\sum_{t \in \La_m} \seminorm{1,t}{\ul{v}_{t}}^2  
\sum_{f\in\Fhb \text{ s.t.~}t\in\Above_f} 1 \right)^{1/2} \lesssim \frac{1}{l} \sum_{m=0}^{l} \left(\sum_{t \in \La_m} \seminorm{1,t}{\ul{v}_{t}}^2 \right)^{1/2}.
\end{equation}
The right-hand side above is the discrete equivalent of the term in the bracket on the right of \eqref{eq:S1.polar.3}.
As in the continuous case, we need a (discrete) Hardy inequality to continue.

\begin{restatable}[Discrete Hardy inequality]{lemma}{lemhardy}\label{lem:Discrete-Hardy}
  If $(r_{m})_{m = 0, \dots, L}$ are positive numbers and, for $l = 1, \dots, L$, $R_{l} \doteq \frac{1}{l} \sum_{m = 0}^{l} r_m$, then
  $$\sum_{l=1}^{L} R_{l}^2 \leq 18 \sum_{l=0}^{L} r_{l}^2.$$
\end{restatable}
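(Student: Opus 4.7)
The plan is to adapt the classical proof of the discrete Hardy inequality, based on a telescoping identity combined with the AM--GM inequality, so as to mirror (in the discrete setting) the continuous integral Hardy inequality invoked in the proof of \eqref{trace:continuous}.

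To obtain a clean identity without a boundary mismatch, I would work with the slightly renormalised average $\tilde R_l\doteq \frac{1}{l+1}\sum_{m=0}^l r_m$ for $l=0,\dots,L$. It satisfies $R_l=\frac{l+1}{l}\tilde R_l\le 2\tilde R_l$ for all $l\ge 1$, so the lemma follows once one shows $\sum_{l=0}^L\tilde R_l^2\le C\sum_{l=0}^L r_l^2$ for an explicit constant $C$. By construction, these averages obey the discrete identity $(l+1)\tilde R_l-l\tilde R_{l-1}=r_l$ for $l\ge 1$, with $\tilde R_0=r_0$; this is the role played by $nA_n-(n-1)A_{n-1}=a_n$ in the classical proof of Hardy's inequality.

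Multiplying this identity by $\tilde R_l$ and bounding $l\tilde R_l\tilde R_{l-1}\le \tfrac{l}{2}\tilde R_l^2+\tfrac{l}{2}\tilde R_{l-1}^2$ yields
\begin{equation*}
r_l\tilde R_l\ge \bigl(\tfrac{l}{2}+1\bigr)\tilde R_l^2-\tfrac{l}{2}\tilde R_{l-1}^2.
\end{equation*}
Summing over $l=0,\dots,L$ and telescoping gives $\sum_{l=0}^L r_l\tilde R_l\ge \bigl(\tfrac{L}{2}+1\bigr)\tilde R_L^2+\tfrac{1}{2}\sum_{l=0}^{L-1}\tilde R_l^2\ge \tfrac{1}{2}\sum_{l=0}^L \tilde R_l^2$. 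Applying the Cauchy--Schwarz inequality to the left-hand side and simplifying produces $\sum_{l=0}^L\tilde R_l^2\le 4\sum_{l=0}^L r_l^2$, whence $\sum_{l=1}^L R_l^2\le 4\sum_{l=1}^L\tilde R_l^2\le 16\sum_{l=0}^L r_l^2\le 18\sum_{l=0}^L r_l^2$. This is a standard argument presenting no real obstacle; the only small care needed is that the telescope leaves a boundary term $\bigl(\tfrac{L}{2}+1\bigr)\tilde R_L^2$, which is non-negative and can simply be discarded, and the target constant $18$ is reached with slack (this route in fact yields $16$).
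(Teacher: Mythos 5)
Your proof is correct, and it takes a genuinely different route from the paper's. You follow the classical textbook argument for Hardy's inequality: after renormalising to $\tilde R_l=\frac{1}{l+1}\sum_{m=0}^l r_m$ (so that the exact identity $(l+1)\tilde R_l - l\tilde R_{l-1}=r_l$ holds), you multiply by $\tilde R_l$, apply AM--GM to split $l\tilde R_{l-1}\tilde R_l$, telescope, and finish with Cauchy--Schwarz; the boundary term $\bigl(\tfrac{L}{2}+1\bigr)\tilde R_L^2$ is nonnegative and discarded. This yields the sharp Hardy constant $4$ for $\tilde R$, and the crude bound $R_l\le 2\tilde R_l$ inflates it to $16$, comfortably within the stated $18$. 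The paper instead starts from the elementary estimate $\frac{1}{l^2}\le 2(\frac{1}{l}-\frac{1}{l+1})$, performs an Abel summation on $\sum_l(\frac{1}{l}-\frac{1}{l+1})\bigl(\sum_{m\le l}r_m\bigr)^2$, factors $a^2-b^2=(a+b)(a-b)$, and uses a Young inequality with an explicit parameter $\alpha=8$ to absorb terms, arriving at the constant $18$. Your route is cleaner and gives a slightly better constant; the paper's is more self-contained in the sense that it avoids renormalising the averages and passing through $\tilde R_l$, but at the cost of a longer bookkeeping argument. Either is acceptable.
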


Setting $r_m = \left(\sum_{t \in \La_m} \seminorm{1,t}{\ul{v}_t}^2\right)^{1/2}$, the bound \eqref{eq:zeta.pre.hardy} gives $\zeta_l\le R_l$, and we infer from Lemma \ref{lem:Discrete-Hardy} that
\begin{align}\label{eq:Hardy}
  \sum_{l=1}^{L} \zeta_l^2 \lesssim \sum_{l=0}^{L} r_{l}^2,
\end{align}
where we can choose $L$ to ensure that the layers $(\La_m)_{m=0,\ldots,L}$ cover $\dom$. Recalling the definition \eqref{eq:zetal} of $\zeta_l$, the left-hand side of \eqref{eq:Hardy} is
$$
\sum_{l=1}^{L} \zeta_l^2 = \sum_{l=1}^{L} \sum_{(f,f') \in \W_{l}} |f|_{d-1} |f'|_{d-1} \frac{|\ol{v}_f - \ol{v}_{t_{\ffp,f}}|^2}{\dffp^d} 
= \sum_{f \in \Fhb} \sum_{l=1}^{L}  \sum_{f' \in \W_{lf}} |f|_{d-1} |f'|_{d-1} \frac{|\ol{v}_f - \ol{v}_{t_{\ffp,f}}|^2}{\dffp^d}.
$$
From the definition of $\W_{lf}$ in \eqref{Rfl} and our choice of $L$ we see that, for each $f\in\Fhb$, the family $(\W_{lf})_{l = 1, \dots, L}$ partitions $\Fhb\backslash\{f\}$. 
Hence, $ \sum_{l=1}^{L}  \sum_{f' \in \W_{lf}} \bullet =\sum_{f'\in\Fhb\backslash\{f\}} \bullet $ and
\begin{equation}\label{eq:Hardy.lhs}
  \sum_{l=1}^{L} \zeta_l^2 = \mathcal{S}_1.
\end{equation} 
In the right-hand side of \eqref{eq:Hardy}, we have
$$
\sum_{l=0}^{L} r_{l}^2 = \sum_{l=0}^{L} \left(\sum_{t \in \La_{l}} \seminorm{1,t}{\ul{v}_t}^2\right). 
$$
Recalling that the layers $(\La_{l})_{l = 0, \dots, L}$ partition $\Th$ by choice of $L$, we infer
\begin{equation}\label{eq:Hardy.rhs}
  \sum_{l=0}^{L} r^2_{l} = \sum_{t \in \Th} \seminorm{1,t}{\ul{v}_t}^2 = \seminorm{1,h}{\ul{v}_h}^2.
\end{equation}
Combining \eqref{eq:Hardy}, \eqref{eq:Hardy.lhs} and \eqref{eq:Hardy.rhs} gives the desired bound $\mathcal{S}_1\lesssim
\seminorm{1,h}{\ul{v}_h}^2$.

\medskip

\textit{\ul{Bound on $\mathcal{S}_2$}}: Let $(f,f')\in\W_l$. Analogous to $\mathcal{S}_1$, we have
\begin{equation}\label{eq:bound.S2.eq1}
  |\ol{v}_{t_{\ffp,f}} - \ol{v}_{t_{\ffp,f'}}| \leq \sum_{j=1}^{J-1} |\ol{v}_{t_{j+1}} - \ol{v}_{f_j}| + \sum_{j=1}^{J-1} |\ol{v}_{t_j} - \ol{v}_{f_j}|,
\end{equation}
where $(t_j)_{j=1,\ldots,N} \in \Iffp$ are the cells intersected by the segment $[x_f + \dffp  \mathbf{n}_{\partial \dom}, x_{f'} + \dffp  \mathbf{n}_{\partial \dom}]$ (see \eqref{set:Iffp} and Figure \ref{fig:Slicing-6-7}-(A)). Then, \eqref{lem:local-tf-approximation.b} yields
\begin{align}\label{key:S2}
  |\ol{v}_{t_{\ffp,f}} - \ol{v}_{t_{\ffp,f'}}| \lesssim  \sum_{t \in \Iffp}h_t^{\frac{2-d}{2}} \seminorm{1,t}{\ul{v}_t}.
\end{align}
The partition \eqref{eq:Iffp.in.Ca} allows us to gather the cells in the sum \eqref{key:S2} according to the distance of their projection on $\partial \dom$ to $f$ (as defined by $\Ca$) and to write
\begin{equation}\label{eq:olv.before.Ca}
|\ol{v}_{t_{\ffp,f}} - \ol{v}_{t_{\ffp,f'}}| \lesssim \sum_{s=1}^{l+1} \sum_{t \in \Ca} h_t^{\frac{2-d}{2}} \seminorm{1,t}{\ul{v}_t}
\le  (l+1)^{1/2}\left(\sum_{s=1}^{l+1} \left(\sum_{t \in \Ca} h_t^{\frac{2-d}{2}} \seminorm{1,t}{\ul{v}_t}\right)^2\right)^{1/2},
\end{equation}
where we have used the Cauchy--Schwarz inequality to conclude. We now need estimates on the cardinality of sets involving $\Ca$.

\begin{restatable}{lemma}{lemCa}\label{lem:Ca}
It holds, for all $l\ge 1$ and $s=1,\ldots,l+1$,
  \begin{enumerate}[label=(\roman*)]
    \item \label{lem:Ca.1} For all $(f,f')\in\W_l$, $\# \Ca \lesssim 1$.
    \item \label{lem:Ca.2} For all $t\in\La_l$, $\#\{(f,f')\in\W_l:t\in\Ca\}\lesssim l^{d-2}$.
  \end{enumerate}
\end{restatable}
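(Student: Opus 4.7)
The approach for both parts is geometric: we confine the relevant centroids to a region of small Lebesgue measure, then convert this measure bound into a cardinality bound using the quasi-uniformity of Assumption \ref{assum:reg.mesh} (each cell occupies a $d$-volume $\gtrsim h^d$, each boundary face a $(d-1)$-measure $\gtrsim h^{d-1}$, with essentially disjoint interiors).

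For part (i), observe that membership in $\Ca$ imposes three constraints on a cell $t$. By \eqref{Height.Ca} the height $\delta_t$ lies in an interval of length $\lesssim h$. Since $t \in \Iffp$ and we work on a flat side of the cube, the centroid $x_t$ lies within distance $h_t \lesssim h$ of the horizontal segment $[x_f + \dffp \mathbf{n}_{\partial\dom}, x_{f'} + \dffp \mathbf{n}_{\partial\dom}]$; orthogonal projection then places $p(x_t)$ within distance $\lesssim h$ of $[x_f, x_{f'}] \subset \partial\dom$. The definition of $\Ca$ further gives $|p(x_t) - x_f| \in [(s-1)h, sh)$. Pinning the position along the segment to an interval of length $h$ while allowing deviations of order $h$ transverse to it confines $p(x_t)$ to a region of $(d-1)$-measure $\lesssim h^{d-1}$, and combined with the height constraint $x_t$ lies in a set of $d$-volume $\lesssim h^d$. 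Hence $\# \Ca \lesssim 1$.

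For part (ii), fix $t \in \La_l$ and count pairs $(f,f')$ by choosing $f$ first, then $f'$. The constraint $|x_f - p(x_t)| \in [(s-1)h, sh)$ places $x_f$ in a spherical shell in $\partial\dom \simeq \mathbb{R}^{d-1}$ of inner radius $(s-1)h$ and thickness $h$, whose $(d-1)$-measure is $\lesssim s^{d-2} h^{d-1}$; quasi-uniformity yields $\lesssim s^{d-2}$ choices for $f$. Given $f$, let $\theta$ be the angle at $x_f$ between the ray to $p(x_t)$ and the ray to $x_{f'}$. The perpendicular distance from $p(x_t)$ to the line through $x_f, x_{f'}$ equals $|x_f - p(x_t)| \sin\theta \simeq sh \sin\theta$, and the condition $t \in \Iffp$ (via the same projection argument as in part (i)) forces this to be $\lesssim h$, so $\sin\theta \lesssim 1/s$. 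The intersection of this cone of half-angle $\lesssim 1/s$ with the shell $\{y \in \partial\dom : (l-1)h \leq |y - x_f| < lh\}$ has $(d-1)$-measure $\lesssim (l/s)^{d-2} \cdot h^{d-2} \cdot h = (l/s)^{d-2} h^{d-1}$, giving $\lesssim (l/s)^{d-2}$ choices for $f'$. Multiplying yields $\lesssim s^{d-2} (l/s)^{d-2} = l^{d-2}$.

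The main delicacy will be the edge cases where the cone/shell picture degenerates. For $s = 1$ the angular cone fills all directions, but the shell around $p(x_t)$ is already a ball of measure $\lesssim h^{d-1}$, so the $\lesssim s^{d-2} = 1$ bound on $f$ still holds and we recover $\lesssim l^{d-2}$ choices for $f'$. For $s = l+1$, $p(x_t)$ may project onto the line outside the segment, so the distance-from-segment condition must alternatively be enforced by $|p(x_t) - x_{f'}| \lesssim h$; this adds at most $\lesssim 1$ extra faces and does not affect the final count. Beyond these checks, the argument reduces to a careful dimensional count using the flatness of the face under consideration and Assumption \ref{assum:reg.mesh}.
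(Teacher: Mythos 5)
Your proof is correct and follows essentially the same strategy as the paper's: in both parts you confine the relevant centroids to a region of controlled Lebesgue measure and then invoke quasi-uniformity to convert the measure bound into a cardinality bound. In part (ii) your angle-based estimate $\sin\theta\lesssim 1/s$ is exactly the paper's cone-with-apex-$x_f$ argument in different language (the paper phrases the angular extent via a homothety from radius $sh$ to radius $lh$), and your $s^{d-2}\times(l/s)^{d-2}$ product is the same factorisation over $f$ then $f'$; you are also right to note (and the paper does so via $l/s\ge l/(l+1)\ge 1/2$) that the $h$-enlargement needed so the region contains $f'$ rather than merely $x_{f'}$ does not spoil the estimate when $s\simeq l$.
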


Using Lemma \ref{lem:Ca}-\ref{lem:Ca.1} we can apply \eqref{CSinRn} to the innermost sum in \eqref{eq:olv.before.Ca} and get the discrete version of \eqref{eq:diff.hor}:
\begin{equation*}
  |\ol{v}_{t_{\ffp,f}} - \ol{v}_{t_{\ffp,f'}}|^2 \lesssim   l \sum_{s=1}^{l+1} \sum_{t \in \Ca} 	h_t^{2-d}\seminorm{1,t}{\ul{v}_t}^2.			
\end{equation*}
Multiplying by $ \frac{|f|_{d-1} |f'|_{d-1}}{\dffp^d}$, summing over $(f,f') \in \W_{l}$ and invoking \eqref{eq:ffp.dffp} we infer
\begin{equation}\label{eq:S2.eq2}
  \beta_{l} \doteq \sum_{(f,f') \in \W_{l}} |f|_{d-1} |f'|_{d-1} \frac{|\ol{v}_{t_{\ffp,f}} - \ol{v}_{t_{\ffp,f'}}|^2}{\dffp^d} \lesssim \frac{1}{l^{d-1}} \sum_{s=1}^{l+1} \sum_{(f,f') \in \W_{l}} \sum_{t \in \Ca} \seminorm{1,t}{\ul{v}_t}^2. 
\end{equation}
Note that $\beta_l$ is the discrete equivalent of the integrand under $\int_0^\infty\bullet d\ell$ in \eqref{eq:S2.polar.1}; indeed, we have
\begin{align}
  \sum_{l=1}^{L} \beta_l &= \sum_{l=1}^{L} \sum_{(f,f') \in \W_{l}} |f|_{d-1} |f'|_{d-1} \frac{|\ol{v}_{t_{\ffp,f}} - \ol{v}_{t_{\ffp,f'}}|^2}{\dffp^d} \nonumber\\
  &= \sum_{(f,f') \in \FFhb}  |f|_{d-1} |f'|_{d-1}\frac{|\ol{v}_{t_{\ffp,f}} - \ol{v}_{t_{\ffp,f'}}|^2}{\dffp^d} = \mathcal{S}_2,
\label{eq:S2.eq4}
\end{align}
where the second equality holds since $(\W_{l})_{l = 1, \dots, L}$ partitions $\FFhb$.

Using \eqref{Height.Ca} and swapping the sums over $t$ and over $(f,f')$ in \eqref{eq:S2.eq2}, we infer the following upper bound on $\beta_l$, corresponding to the integrand over $\ell$ in \eqref{eq:S2.polar.2}:
\begin{align*}
  \beta_{l} \lesssim \frac{1}{l^{d-1}} \sum_{s=1}^{l+1} \sum_{t \in \La_{l-2}\cup \La_{l-1}\cup\La_l} \seminorm{1,t}{\ul{v}_t}^2 \sum_{\substack{(f,f') \in \W_{l} \; s.t. \\ t \in \Ca}} 1\lesssim  \frac{1}{l}  \sum_{s=1}^{l+1} \sum_{t \in \La_{l-2}\cup \La_{l-1}\cup\La_l} \seminorm{1,t}{\ul{v}_t}^2,
\end{align*} 
where the conclusion follows from Lemma \ref{lem:Ca}-\ref{lem:Ca.2}. The term inside the sum over $s$ in the right-hand side does not depend on $s$ and, since $\frac{1}{l}  \sum_{s=1}^{l+1}1=\frac{l+1}{l}\lesssim 1$, this leads to the discrete version of the bound \eqref{eq:S2.polar.3}
$$
\beta_l \lesssim \sum_{t \in\La_{l-2}\cup \La_{l-1}\cup\La_l} \seminorm{1,t}{\ul{v}_t}^2.
$$
Since the layers partition $\Th$, we deduce that
\begin{align}\label{eq:S2.eq3}
  \sum_{l=1}^{L} \beta_l \lesssim \sum_{l=1}^{L} \sum_{t \in \La_{l-2}\cup \La_{l-1}\cup\La_l} \seminorm{1,t}{\ul{v}_t}^2 \le 3 \sum_{t \in \Th} \seminorm{1,t}{\ul{v}_t}^2 = 3\seminorm{1,h}{\ul{v}_h}^2.
\end{align}
The relations \eqref{eq:S2.eq4} and \eqref{eq:S2.eq3} lead to $\mathcal{S}_2 \lesssim \seminorm{1,h}{\ul{v}_h}^2$, as required. 

\subsection{Proof of the technical lemmas}\label{sec:proof.technical.trace}

We prove here the technical lemmas used in Section \ref{sec:proof.trace}. For legibility, these lemmas are re-stated before their proof.

\lemlocalapprox*

\begin{proof}
Using the linearity, idempotency and $L^2(f)$-boundedness of $\lproj{f}{0}$ we get
\begin{align*}
  \norm{L^2(f)}{\lproj{f}{0} v_f - \lproj{t}{0} v_t} &= \norm{L^2(f)}{\lproj{f}{0}(v_f - \lproj{t}{0} v_t)}\\
  &\leq \norm{L^2(f)}{v_f - \lproj{t}{0} v_t} \nonumber\\
  &\leq \norm{L^2(f)}{v_f - v_t} + \norm{L^2(f)}{v_t - \lproj{t}{0} v_t} \nonumber\\
  &\lesssim \norm{L^2(f)}{v_f - v_t} + h_t^{\frac{1}{2}} \norm{L^2(t)}{\nabla v_t} \leq h_t^{\frac{1}{2}} \seminorm{1,t}{\ul{v}_t},
\end{align*}
where the third inequality follows from the approximation property of $\lproj{t}{0}$ (see \cite[Theorem 1.45]{di-pietro.droniou:2020:hybrid}), and the last inequality from the definition \eqref{eq:def.disc.H1} of $\seminorm{1,t}{\ul{v}_t}$. This concludes the proof of \eqref{lem:local-tf-approximation.a}. To deduce \eqref{lem:local-tf-approximation.b} from \eqref{lem:local-tf-approximation.a} we simply notice that $\lproj{f}{0} v_f - \lproj{t}{0} v_t$ is constant and that $|f|_{d-1}\simeq h_t^{d-1}$.
\end{proof}

\lemcardinality*

\begin{proof}
  \textit{Proof of \ref{lem:Lm.cardinality}}: The first property can readily be proved using the fact that $\mathcal{L}^m \cap \mathcal{V}_{\fbou }$ is contained in a cylinder with base an $h$-enlargement of $f$ and height $\simeq h$, which implies that $|\mathcal{L}^m \cap \mathcal{V}_{\fbou}|_d \lesssim h^d$. Since each $t\in \mathcal{L}^m \cap \mathcal{V}_{\fbou }$ has measure $\simeq h^d$ by quasi-uniformity of the mesh, we infer that $\#(\La_m\cap \Above_{\fbou })\lesssim \frac{|\mathcal{L}^m \cap \mathcal{V}_{\fbou}|_d}{h^d}\lesssim 1$.

  \medskip 
  \textit{Proof of \ref{lem:Rfl.cardinality}}: 
  We first note that the measure of an annulus on $\partial \dom$ with radii $r_1>r_2$ satisfies
  \begin{align}\label{MeasureOfAnnulus}
   |\An{r_1}{r_2}{z}|_{d-1}= \omega_{d-1} (r_1^{d-1} - r_2^{d-1}) \leq (d-1)\omega_{d-1} r_1^{d-2} (r_1 - r_2),
  \end{align}
  where $\omega_{d-1}$ is the measure of unit sphere in $d-1$ dimensions.   
  All faces in $\W_{l \fbou}$ have their centroid in $\An{lh}{(l-1)h}{x_f}$, and are thus contained in the annulus with same center
  and radii $r_1=(l+1)h$ and $r_2=(l-2)h$. By \eqref{MeasureOfAnnulus}, this annulus has measure $\lesssim ((l+1)h)^{d-2}\times 3h\lesssim l^{d-2}h^{d-1}$.
  Invoking then the quasi-uniformity of the mesh (which yields $|f'|_{d-1}\simeq h^{d-1}$ for all $f'\in\W_{l \fbou }$), we readily infer that  
  $\#\W_{l \fbou } \lesssim \frac{|\W_{l\fbou}|_{d-1}}{h^{d-1}}\lesssim \frac{l^{d-2} h^{d-1}}{h^{d-1}} \lesssim l^{d-2}$.  

  \medskip
  \textit{Proof of \ref{lem:f.cardinality}}: If $\fbou  \in \Fhb$ is such that $t\in\Above_{\fbou }$, then $x_f$ is contained in the projection on $\partial \dom$ of $t$, and thus $f$ is fully contained in the disc centered at $p(x_t)$ and of radius $2h$. Since this disc has measure $\lesssim h^{d-1}$, the same arguments as above give
  $
  \# \{f\in\Fhb:t\in\Above_f\} \lesssim\frac{h^{d-1}}{h^{d-1}}=1.
  $
\end{proof}

\lemhardy*

\begin{proof}
For $l \geq 1$ we have $\frac{1}{l^2} \leq 2 (\frac{1}{l}- \frac{1}{l+1})$.
So
\begin{align}\label{eq:Hardy.1}
  \sum_{l=1}^{L} R_{l}^2 &= \sum_{l=1}^{L} \frac{1}{l^2} \left(\sum_{m=0}^{l} r_m\right)^2 \nonumber \\
  &\leq 2 \sum_{l=1}^{L} \left(\frac{1}{l} - \frac{1}{l+1}\right) \left(\sum_{m=0}^{l} r_m\right)^2 \nonumber \\
  &\leq 	2 \sum_{l=1}^{L} \frac{1}{l} \left(\sum_{m=0}^{l} r_m\right)^2 - 2 \sum_{l=2}^{L+1} \frac{1}{l} \left(\sum_{m=0}^{l-1} r_m\right)^2 \nonumber \\
  &= 2 \sum_{l=2}^{L} \frac{1}{l} \left[\left(\sum_{m=0}^{l} r_m\right)^2 - \left(\sum_{m=0}^{l-1} r_m\right)^2\right] + 2 (r_0 +  r_1)^2- \frac{2}{L+1} \left(\sum_{m=0}^{L} r_m\right)^2\nonumber\\
  &\le 2 \sum_{l=2}^{L} \frac{1}{l} \left[\left(\sum_{m=0}^{l} r_m\right)^2 - \left(\sum_{m=0}^{l-1} r_m\right)^2\right] + 2 (r_0 +  r_1)^2. 
\end{align}
The identity $a^2 - b^2 = (a+b) (a-b)$ yields, for all $l\ge 2$,
$$
\left(\sum_{m=0}^{l} r_m\right)^2 - \left(\sum_{m=0}^{l-1} r_m\right)^2 = \left(\sum_{m=0}^{l} r_m + \sum_{m=0}^{l-1} r_m\right) r_{l}
\le lR_lr_l+(l-1)R_{l-1}r_l\le l(R_l+R_{l-1})r_l.
$$
Substituting the above bound in \eqref{eq:Hardy.1}, writing $2 (r_0+r_1)^2=2(r_0+r_1)r_0+2R_1r_1\le 3r_0^2+r_1^2+2R_1r_1$
and further simplifying yields
\begin{align*}
  \sum_{l=1}^{L} R_{l}^2 \leq 2\sum_{l=2}^L (R_l+R_{l-1})r_l + 3r_0^2+r_1^2+2R_1r_1
   \leq{}& 2\sum_{l=1}^L (R_l+R_{l-1})r_l + 3r_0^2+r_1^2\\
   \leq{}& \frac{1}{\alpha}\sum_{l=1}^L (R_l+R_{l-1})^2+\alpha\sum_{l=1}^L r_l^2 + 3r_0^2+r_1^2,
\end{align*}
where, in the second inequality, we have absorbed $2R_1r_1$ into the first sum on the right (as the term corresponding to 
$l=1$, setting $R_0=0$) and, in the conclusion, we have used the Young inequality $2ab\le \frac{1}{\alpha}a^2+\alpha b^2$
with $\alpha>0$ arbitrary. Selecting $\alpha\ge 3$ we can absorb $3r_0^2$ as the term $l=0$ in the second sum in the
right-hand side and, further using $(R_l+R_{l-1})^2\le 2R_l^2+2R_{l-1}^2$ and $r_1^2\le \sum_{l=0}^L r_l^2$, we infer
\[
  \sum_{l=1}^{L} R_{l}^2 \leq \frac{4}{\alpha}\sum_{l=1}^L R_l^2+(\alpha+1)\sum_{l=0}^L r_l^2.
\]
Fixing $\alpha=8$ and simplifying concludes the proof.
\end{proof}

\lemCa*

\begin{proof}
  \textit{Proof of \ref{lem:Ca.1}}: The construction of $\Iffp$ implies that any $t \in \Iffp$ lies in a cylinder, parallel to $\partial\Omega$, of axis the $h$-enlargement of the segment $[x_f + \dffp \; \mathbf{n}_{\partial \dom}, x_{f'} + \dffp \; \mathbf{n}_{\partial \dom}]$ and base a $(d-1)$-dimensional disc of radius $2 h$. If moreover $t\in\Ca$ then $t$ actually lies in the section of this cylinder between lengths $(s-2)h$ and $(s+1)h$ along its axis. Since this section of cylinder has measure $\lesssim h^d$, the same reasoning as in the proof of Lemma \ref{lem:Cardinality}-\ref{lem:Lm.cardinality} yields $\#\Ca\lesssim 1$.
  
  \medskip	
  \textit{Proof of \ref{lem:Ca.2}}: Fix $t \in \La_l$, set $C_{tl}=\{(f,f')\in \W_l:t\in\Ca\}$, and let $C^1_{tl}=\{f\in\Fhb:\exists f'\in\Fhb \mbox{ s.t. }(f,f')\in C_{tl}\}$ be the projection of $C_{tl}$ on the first component of $\Fhb \times \Fhb$. If $f\in C_{tl}^1$ then, by the definition \eqref{set:HorizontalLayers} of $\Ca$ and since $f$ has diameter $\le h$ we have $f \subset \An{r_1}{r_2}{p(x_t)}$ with $r_1=(s+1)h$ and $r_2=(s-2)h$. The same arguments as in the proof of Lemma \ref{lem:Cardinality}-\ref{lem:Rfl.cardinality} then yield 
  \begin{equation}\label{eq:est.Ctl1}
    \# C_{tl}^1  \lesssim \frac{((s+1)h)^{d-2} h}{h^{d-1}} \lesssim s^{d-2}. 
  \end{equation}
  If $(f,f')\in C_{tl}$ then $t\in\Ca\subset \Iffp$ so the projection of $t$ on $\partial \dom$ intersects the line segment $[x_f, x_{f'}]$, and $x_{f'}$ is therefore in the cone $\mathfrak C$ of apex $x_f$ and opening defined by the projection of $t$ on $\partial \dom$; moreover, by \eqref{Wl}, $x_{f'}$ lies in the section of $\mathfrak C$ that is between distances $(l-1)h$ and $lh$ of $x_f$ (see Figure \ref{fig:slicing-8}). 
  \begin{figure}
    \includegraphics[width=.35\linewidth]{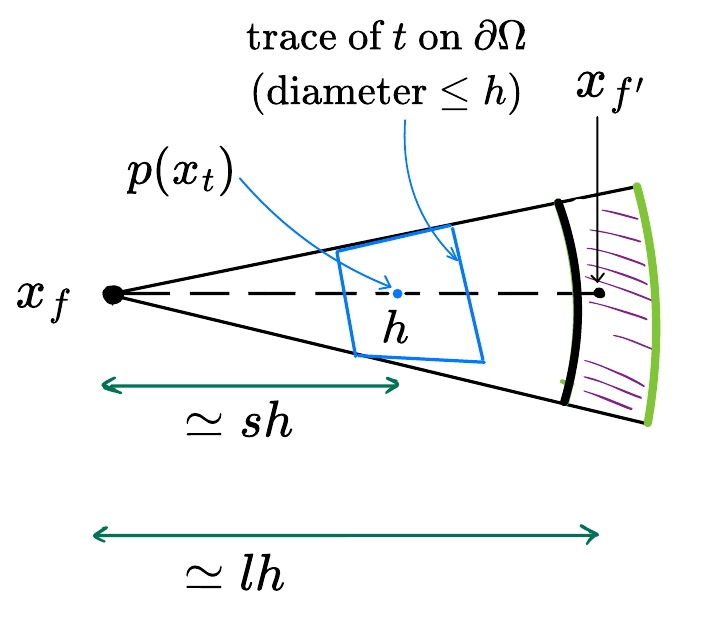}
    \caption{Cone $\mathfrak C$ of apex $x_f$ and opening defined by the trace of a cell $t$ on $\partial \Omega$.}
    \label{fig:slicing-8}
  \end{figure}
  Since $|p(x_t)-x_f|\simeq sh$ (as $t\in\Ca$) and the projection of $t$ on $\partial \dom$ is of diameter $\le h$, the disc centered at $x_f$ and of radius $sh$ intersects $\mathfrak C$ along a set of diameter $\simeq h$. By homothetie, the disc centered at $x_f$ and of radius $lh$ thus intersects $\mathfrak C$ along a set of diameter $\lesssim h\times \frac{lh}{sh}=h\frac{l}{s}$, see the green section in Figure \ref{fig:slicing-8}. The dashed region in this figure contains $x_f'$, so its $h$-enlargement -- both along the radius from $x_f$ and along the portions of spheres in black and green delimiting this region -- fully contains $f'$. The enlarged portions of spheres have diameter $\lesssim h\frac{l}{s}+h\lesssim h\frac{l}{s}$, since $l/s\ge l/(l+1)\ge 1/2$, so the enlargement of the dashed region has measure $\lesssim  \left(h \frac{l}{s}\right)^{d-2} h = \left(\frac{l}{s}\right)^{d-2} h^{d-1}$. Setting $C_{tlf}=\{f' \in \Fhb: (f,f')\in C_{tl}\}$ and using $|f'|_{d-1}\simeq h^{d-1}$ for all $f'\in\Fhb$, we infer that
  \begin{equation}\label{eq:est.Ctlf}
  \#C_{tlf} \lesssim \left(\frac{l}{s}\right)^{d-2}.
  \end{equation}
  In conclusion,
  $$
  \#C_{tl} = \sum_{\substack{(f,f') \in C_{tl}}} 1 = \sum_{f \in C_{tl}^1} \sum_{f' \in C_{tlf}} 1 \overset{\eqref{eq:est.Ctlf},\eqref{eq:est.Ctl1}}\lesssim s^{d-2} \times \left(\frac{l}{s}\right)^{d-2} = l^{d-2}. \qedhere
  $$
\end{proof}

\section{Discrete lifting property from the side of a cube}\label{sec:proof.lifting.cube}

The proof of the lifting property follows the same approach as the proof for the discrete trace inequality. We first consider the case of a cube and we lift a piecewise polynomial function defined on one of its sides. To do so, we follow the ideas of the continuous lifting done in \cite{Droniou_intsob:01}, which we briefly recall in Section \ref{sec:lifting.continuous}. Then, we introduce some notations for the discrete setting in Section \ref{sec:notations.lifting}, before giving the proof of the lifting from one side of $\dom$ into $\dom$ in Section \ref{sec:proof.lifting}. The case of a generic polytopal domain $\dom$ is treated in Section \ref{sec:trace.lift.polytopalcase}.

\subsection{Continuous lifting property}\label{sec:lifting.continuous}

We consider here that $\dom=\mathbb{R}^{d-1}\times (0,1)$ is a strip, we take $w\in H^{1/2}(\mathbb R^{d-1})$ and we construct $v\in H^1(\dom)$ whose trace on $\mathbb R^{d-1}\times \{0\}$ coincides with $w$ and such that $\seminorm{H^1(\dom)}{v}\lesssim \seminorm{H^{1/2}(\mathbb R^{d-1})}{w}$.

\medskip

To define the value of $v$ at $(x',x_d)\in\dom$, we consider an integral average of values of $w$ on a disc centered at $(x',0)$ and of radius $x_d$. This average is smoothed by a kernel to ensure regularity of $v$. We therefore start from a classical regularisation kernel: for $\epsilon>0$ and $z\in\mathbb R^{d-1}$ we set $\rho_\epsilon(z)=\epsilon^{-(d-1)}\rho(\epsilon^{-1}z)$ where $\rho\in C^1_c(\mathbb R^{d-1})$ is non-negative, is supported in the unit disc, and has integral equal to 1. We then set
\begin{equation}\label{eq:cont.lifting.v}
 v(x',x_d)=\int_{\mathbb R^{d-1}} w(y')\rho_{x_d}(x'-y')\,dy'.
\end{equation}
Classical properties of smoothing kernels ensure that $v(\cdot,x_d)\to w$ in $L^2(\mathbb{R}^{d-1})$ as $x_d\to 0$, which shows that the trace of $v$ on $\mathbb R^{d-1}\times \{0\}$ is $w$. We now have to estimate the $H^1(\dom)$-seminorm of $v$.

\medskip

We have
\begin{align}
\label{eq:lift.nabla.v}
\nabla v(x',x_d)={}&\int_{\mathbb R^{d-1}} w(y')\nabla_{(x',x_d)}(\rho_{x_d}(x'-y'))\,dy'\\
\label{eq:lift.nabla.v.2}
={}&\int_{\mathbb R^{d-1}} (w(y')-w(x'))\nabla_{(x',x_d)}(\rho_{x_d}(x'-y'))\,dy'
\end{align}
where, in the second line, we have used 
\[
\int_{\mathbb R^{d-1}}\nabla_{(x',x_d)}(\rho_{x_d}(x'-y'))\,dy'=\nabla_{(x',x_d)}\int_{\mathbb R^{d-1}}\rho_{x_d}(x'-y')\,dy'=\nabla_{(x',x_d)}1=0.
\]
It can easily be checked that
\[
|\nabla_{(x',x_d)}(\rho_{x_d}(x'-y'))|\lesssim \frac{1}{x_d^d}\left[\rho\left(\frac{x'-y'}{x_d}\right)+\left(1+\left|\frac{x'-y'}{x_d}\right|\right)|\nabla \rho|\left(\frac{x'-y'}{x_d}\right)\right]= \frac{1}{x_d^d}D\rho\left(\frac{x'-y'}{x_d}\right),
\]
where $z\in \mathbb R^{d-1}\to D\rho(z)\doteq\rho(z)+(1+|z|)|\nabla \rho|(z)$ is a compactly supported and bounded function.
Plugging this into \eqref{eq:lift.nabla.v.2}, squaring and using a Cauchy--Schwarz inequality yields
\begin{align}
|\nabla v(x',x_d)|^2\lesssim{}& \left(\int_{\mathbb R^{d-1}} |w(x')-w(y')| \frac{1}{x_d^{(d+1)/2}}D\rho\left(\frac{x'-y'}{x_d}\right)^{1/2} \frac{1}{x_d^{(d-1)/2}}D\rho\left(\frac{x'-y'}{x_d}\right)^{1/2}\,dy'\right)^2\nonumber\\
\lesssim{}& \int_{\mathbb R^{d-1}} |w(x')-w(y')|^2 \frac{1}{x_d^{d+1}}D\rho\left(\frac{x'-y'}{x_d}\right)\,dy'
 \times
 \int_{\mathbb R^{d-1}}  \frac{1}{x_d^{d-1}}D\rho\left(\frac{x'-y'}{x_d}\right)\,dy'.
\label{eq:lift.nabla.v.3}\end{align}
A change of variable $y'\to z=\frac{x'-y'}{x_d}$ gives
\begin{equation}\label{eq:g.change}
 \int_{\mathbb R^{d-1}}  \frac{1}{x_d^{d-1}}D\rho\left(\frac{x'-y'}{x_d}\right)\,dy'
=\int_{\mathbb R^{d-1}}D\rho(z)\,dz\lesssim 1.
\end{equation}
So, integrating \eqref{eq:lift.nabla.v.3} over $(x',x_d)\in \dom$, we obtain
\begin{equation}
\norm{L^2(\dom)}{\nabla v}^2\lesssim
 \int_{\mathbb R^{d-1}}  \int_{\mathbb R^{d-1}} |w(x')-w(y')|^2 \left(\int_0^1\frac{1}{x_d^{d+1}}D\rho\left(\frac{x'-y'}{x_d}\right)\,dx_d\right)\,dx'dy'.
\label{eq:lift.nabla.v.4}
\end{equation}
We then note that $D\rho(z)=0$ whenever $|z|\ge 1$, which implies
\begin{align}
\int_0^1\frac{1}{x_d^{d+1}}D\rho\left(\frac{x'-y'}{x_d}\right)\,dx_d&=
\int_{|x'-y'|}^1\frac{1}{x_d^{d+1}}D\rho\left(\frac{x'-y'}{x_d}\right)\,dx_d\nonumber\\
&\le \norm{L^\infty(\mathbb R^{d-1})}{D\rho}\int_{|x'-y'|}^1\frac{dx_d}{x_d^{d+1}}\lesssim \frac{1}{|x'-y'|^d}.
\label{eq:lift.estim.g}
\end{align}
Inserted into \eqref{eq:lift.nabla.v.4} this gives
\[
\norm{L^2(\dom)}{\nabla v}^2\lesssim \int_{\mathbb R^{d-1}}  \int_{\mathbb R^{d-1}} \frac{|w(x')-w(y')|^2 }{|x'-y'|^d}\,dx'dy'=\seminorm{H^{1/2}(\mathbb R^{d-1})}{w}^2,
\]
which concludes the proof of the continuous lifting property.

\subsection{Notations}\label{sec:notations.lifting}

In the rest of Section \ref{sec:proof.lifting.cube}, we come back to the same setting as in Section \ref{sec:notations.trace}: $\dom$ is a cube and $\partial\dom$ only refers to one of its sides (with $\Fhb$ and $\FFhb$ restricted to faces on this side).
We define $A_t$ as the region covered by the closures of the faces on $\partial\Omega$ that are within distance $\delta_t$ of $p(x_t)$ (see Figure \ref{fig:illustration-At}), that is:
\begin{equation}\label{def:At}
  A_t = \bigcup\{\mathrm{cl}(f): f \in \Fhb\text{ such that } \dist(p(x_t),f) \leq \delta_t \},
\end{equation}
where $\dist(z,f)$ is the distance between $z\in\partial\dom$ and $f\subset\partial\dom$, and $\mathrm{cl}(f)$ is the closure of $f$.

\begin{figure}[h]
  \centering
    \includegraphics[width=.4\linewidth]{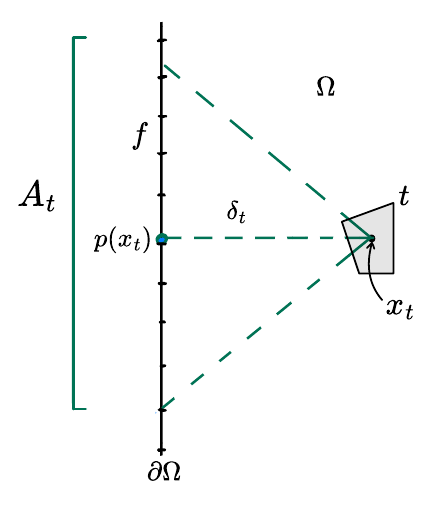}
  \caption{Illustration of $\A_t$.}
  \label{fig:illustration-At}
\end{figure}

The lifting of an element in $\Uhbb$ will be, as in the continuous case (see \eqref{eq:cont.lifting.v}), obtained by averaging face values over $A_t$. To this purpose we will use the following weights:
\begin{align}\label{eq:rho}
  \rho_t(f) \doteq \left\{
  \begin{aligned}
    &\frac{|f|_{d-1}}{|\A_t|_{d-1}} &\mbox{if $f \subset \A_t$},	\\
    &0 &\qquad \mbox{otherwise}.
  \end{aligned}
  \right.
\end{align}

For $\fint \in \Fhi$, similarly to $\delta_t$ we set $\delta_\fint=|x_\fint-p(x_\fint)|$.
The set of neighbouring cells in $\Th$ sharing a face $\fint$ is denoted by $\mathcal{T}_\fint$. By construction, there are only two cells in $\mathcal{T}_\fint$, which we label in the following as $t$ and $t'$.
 The absolute value of the difference of the associated weights, at a face $f \in \Fhb$, is
\begin{align}\label{eq:def.Deltatt}
  D_{g} \rho(f) \doteq |\rho_{t'}(f) - \rho_t(f)|\,,\quad\text{where $t,t'$ are the two cells in $\mathcal T_g$}. 
\end{align}
We will also need the union and symmetric difference of the sets $A_t$ on the sides of $g$:
$$
  \A_\fint \doteq \A_t \cup \A_{t'}\quad\text{ and }\quad \Delta_\fint \doteq A_t\Delta A_{t'}=(\A_t\cup \A_{t'})\backslash (\A_t\cap \A_{t'}).
$$

We fix a map $\projFace: \Fhi \rightarrow \Fhb$ that selects a boundary face at the vertical from $\partial\dom$ of an internal face (see Figure \ref{fig:proj-g}):
\begin{equation}\label{eq:def.projFace}
  \text{$\forall \fint\in\Fhi$, $\projFace(\fint)\in \Fhb$ is such that $p(x_\fint) \in \mathrm{cl}(\projFace(\fint))$}.
\end{equation}
Note that there may be several $f\in\Fhb$ such that $p(x_\fint) \in \mathrm{cl}(f)$ (if $p(x_\fint)$ belongs to the relative boundary of some faces), in which case $\projFace(\fint)\in \Fhb$ just corresponds to one of these $f$, selected arbitrarily.

\begin{figure}[htbp!]%
  \includegraphics[width=.5\linewidth]{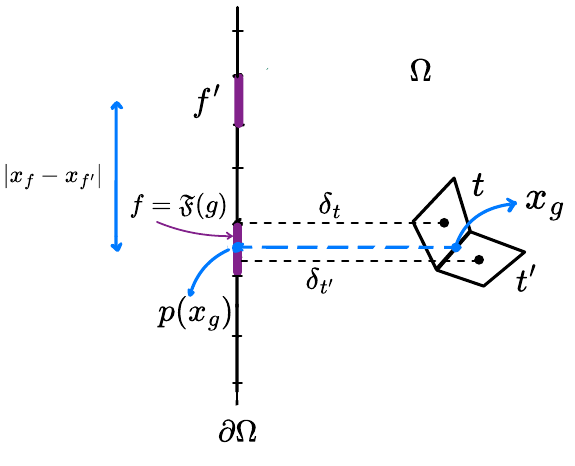}%
  \caption{Illustration of $\projFace$.}
  \label{fig:proj-g}
\end{figure}

Letting $\mathcal P(\Fhi)$ be the set of all subsets of $\Fhi$, we also define $\projFace^\dagger:\Fhb\to\mathcal P(\Fhi)$ that gathers all the internal faces that are \emph{approximately above} a selected boundary face: 
\begin{equation}\label{eq:def.projFacedagger}
  \text{$\forall\fbou\in\Fhb$, $\projFace^\dagger(\fbou)$ is the set of $\fint\in\Fhi$ such that $\projFace(\fint)=\fbou$}.
\end{equation}
We note that $\{ \projFace^\dagger(\fbou)  \}_{\fbou \in \Fhb}$ is a partition of $\Fhi$. 

\subsection{Proof of the discrete lifting property from the side of a cube}\label{sec:proof.lifting} 

The lifting $w_h\in \Uhbb\mapsto \mathcal L_h(w_h)=\ul{v}_h\in\Uh$ is defined the following way. Denoting as usual $w_h=(w_f)_{f \in \Fhb}$ and $\ul{v}_h=((v_t)_{t\in\Th},(v_f)_{f\in\Fh})$, we first define each cell value of $\ul{v}_h$ as a weighted average of boundary face values, similarly to \eqref{eq:cont.lifting.v}:
\begin{subequations}\label{eq:reconstructed}
  \begin{align}\label{eq:reconstructed.vt}
    v_t =  \sum_{f \in \Fhb} \ol{w}_f  \rho_t(f)\qquad\forall t\in\Th,
  \end{align}
  where we recall that $\ol{w}_f$ is the average of $w_f$ over $f$. Then, using these cell values, we define the face values of $\ul{v}_h$ by
  \begin{alignat}{2}
      \label{eq:reconstructed.vg}
      v_\fint &= \frac{v_{t} + v_{t'}}{2} &&\qquad \text{$\forall \fint \in \Fhi$, with $t,t'$ the cells in $\mathcal T_g$ (neighbouring $g$)}, \\
      \label{eq:reconstructed.vf}
      v_\fbou&= w_\fbou &&\qquad\text{$\forall f \in \Fhb$}.
  \end{alignat}
\end{subequations}
We obviously have $\tr(\ul{v}_h)=w_h$ so it remains to prove that the lifting estimate in \eqref{eq:lifting} holds.
\medskip

All components of $\ul{v}_h$ except boundary ones are constant, so for all $t\in\Th$ we have $\nabla v_t=0$ and $\norm{L^2(g)}{v_g-v_t}^2=|g|_{d-1}|v_g-v_t|^2$ for all $g\in\mathcal F_t\cap \Fhi$. Hence, 
\begin{align}
  \seminorm{1,h}{\ul{v}_h}^2 \overset{\eqref{eq:def.disc.H1}}& = \sum_{t \in \Th} \sum_{f \in \mathcal{F}_t} h_t^{-1} \norm{L^2(f)}{v_f-v_t}^2\nonumber\\
  &= \sum_{t \in \Th} \sum_{g \in \mathcal F_t\cap \Fhi} h_t^{-1} |g|_{d-1} |v_g - v_t|^2 + \sum_{t \in \Th} \sum_{f \in \mathcal F_t\cap\Fhb} h_t^{-1} \norm{L^2(f)}{v_f-v_t}^2\nonumber\\
  \overset{\text{Assum. \ref{assum:reg.mesh}}}&\simeq \sum_{g \in \Fhi} h^{d-2} (D_g v)^2 +  \sum_{f \in \Fhb} h^{-1}\norm{L^2(f)}{w_f-v_t}^2 = T_1 + T_2,
\label{eq:lifting.internal_boundary}
\end{align}
where, in the sum $T_1$, we have used $v_g - v_t \overset{\eqref{eq:reconstructed.vg}}= \frac{v_{t'} - v_{t}}{2}$ and set $D_g v \doteq  |v_{t'} - v_{t}|$ (with $t,t'$ the cells in $\mathcal T_g$) while, in the sum $T_2$, $t$ is the unique cell that contains $f \in \Fhb$. We now conclude by showing that $T_1\lesssim \tnorm{1/2,h}{w_{h}}^2$ and $T_2\lesssim \tnorm{1/2,h}{w_{h}}^2$.

\medskip

\textit{\ul{Bound on $T_1$}}: For $t,t'\in\Th$, using \eqref{eq:reconstructed.vt} we have (compare with \eqref{eq:lift.nabla.v} in the continuous setting)
\begin{equation}\label{eq:lift.T1.tt}
  v_{t'}-v_t = \sum_{f \in \Fhb} \ol{w}_f (\rho_{t'}(f)-\rho_t(f)).
\end{equation}
The faces are pairwise disjoint so $|A_t|_{d-1}=\sum_{f\subset A_t}|f|_{d-1}$ and
\begin{equation}\label{eq:sum.rho}
\sum_{f\in\Fhb}\rho_t(f)\overset{\eqref{eq:rho}}=\frac{1}{|A_t|_{d-1}}\sum_{f\subset A_t}|f|_{d-1}=1\qquad\forall t\in\Th.
\end{equation}
Hence,
$$
\sum_{f \in \Fhb} (\rho_{t'}(f)-\rho_t(f))=\sum_{f \in \Fhb} \rho_{t'}(f) - \sum_{f \in \Fhb} \rho_t(f) = 0.
$$
Taking an arbitrary $f'\in\Fhb$, multiplying the equation above by $\ol{w}_{f'}$ and subtracting it from \eqref{eq:lift.T1.tt} yields the discrete equivalent of \eqref{eq:lift.nabla.v.2}:
$$
v_{t'}-v_t= \sum_{f \in \Fhb} (\ol{w}_f - \ol{w}_{f'}) (\rho_{t'}(f)-\rho_t(f)).
$$ 
Applying this relation to $t,t'\in\mathcal T_\fint$ for some $\fint\in\Fhi$, taking $f'=\projFace(g)$ and recalling that $D_g\rho(f)=|\rho_{t'}(f)-\rho_t(f)|$, we infer
$$
  D_g v \leq  \sum_{f \in \Fhb} |\ol{w}_f - \ol{w}_{f'}|  D_g\rho(f).
$$
Analogously to \eqref{eq:lift.nabla.v.3}, the Cauchy-Schwarz inequality then yields
\begin{align}
  (D_g v)^2 \leq{}&  \left(\sum_{f \in \Fhb} |\ol{w}_f - \ol{w}_{f'}|^2 \; D_g\rho(f) \; \frac{h}{\delta_g}\right) \times \left(\sum_{f \in \Fhb} \frac{\delta_g}{h}D_g\rho(f)\right)\nonumber\\
  \lesssim{}&\sum_{f \in \Fhb} |\ol{w}_f - \ol{w}_{f'}|^2 \; D_g\rho(f) \; \frac{h}{\delta_g},
\label{eq:Dgv}
\end{align}
the second inequality following from Lemma \ref{lem:Dgrho} below, which establishes the discrete equivalent of \eqref{eq:g.change}.

\begin{restatable}[Estimate on $D_g\rho$]{lemma}{lemDgrho}\label{lem:Dgrho}
For all $g\in\Fhi$, it holds
\begin{equation*}
  \sum_{f \in \Fhb} \frac{\delta_g}{h}D_g\rho(f) \lesssim 1.
\end{equation*}
\end{restatable}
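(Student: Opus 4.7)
The plan is to decompose the sum according to the position of $f$ relative to $A_t$ and $A_{t'}$ (where $t,t'$ are the two cells in $\mathcal T_g$), then estimate each piece using the geometric structure of these sets on $\partial\dom$. Specifically, since $\rho_t(f)$ is $|f|_{d-1}/|A_t|_{d-1}$ if $f\subset A_t$ and zero otherwise, the three cases $f\subset A_t\cap A_{t'}$, $f\subset A_t\setminus A_{t'}$, and $f\subset A_{t'}\setminus A_t$ give, after summing $|f|_{d-1}$ over each region,
\[
\sum_{f\in\Fhb} D_g\rho(f) = \frac{|A_t\setminus A_{t'}|_{d-1}}{|A_t|_{d-1}} + \frac{|A_{t'}\setminus A_t|_{d-1}}{|A_{t'}|_{d-1}} + |A_t\cap A_{t'}|_{d-1}\,\frac{\bigl||A_t|_{d-1}-|A_{t'}|_{d-1}\bigr|}{|A_t|_{d-1}|A_{t'}|_{d-1}}.
\]
Since $\bigl||A_t|_{d-1}-|A_{t'}|_{d-1}\bigr|\le |\Delta_g|_{d-1}$ and $|A_t\cap A_{t'}|_{d-1}\le |A_t|_{d-1}$, all three contributions are dominated by $|\Delta_g|_{d-1}/\min(|A_t|_{d-1},|A_{t'}|_{d-1})$, and it suffices to establish this is $\lesssim h/\delta_g$. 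As an aside, the identity \eqref{eq:sum.rho} trivially gives $\sum_f D_g\rho(f)\le 2$, so the regime $\delta_g\lesssim h$ is immediate; the geometric analysis below is needed only when $\delta_g\gg h$.

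The core geometric estimates, using Assumption \ref{assum:reg.mesh}, are the following. First, $A_t$ is the union of boundary faces meeting the disc $\{y\in\partial\dom: \dist(p(x_t),y)\le\delta_t\}$, so it is sandwiched between the $d{-}1$-dimensional discs of radii $\delta_t-O(h)$ and $\delta_t+O(h)$ around $p(x_t)$; consequently $|A_t|_{d-1}\simeq \max(h,\delta_t)^{d-1}$, and similarly for $A_{t'}$. Second, since $t,t'$ share the face $g$, their centroids satisfy $|x_t-x_{t'}|\lesssim h$, which yields both $|\delta_t-\delta_{t'}|\lesssim h$ and $|p(x_t)-p(x_{t'})|\lesssim h$; hence $\delta_t\simeq\delta_{t'}\simeq\delta_g$ up to additive $O(h)$, and $|A_t|_{d-1}\simeq|A_{t'}|_{d-1}\simeq \max(h,\delta_g)^{d-1}$. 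Third, if $y\in\Delta_g$ lies in a face $f$, then $\dist(p(x_t),f)\le\delta_t$ while $\dist(p(x_{t'}),f)>\delta_{t'}$ (or vice versa); combining with the estimates above forces $y$ into an annular shell of inner radius $\delta_g-O(h)$ and outer radius $\delta_g+O(h)$ around $p(x_t)$. Using \eqref{MeasureOfAnnulus} on this shell yields $|\Delta_g|_{d-1}\lesssim h\max(h,\delta_g)^{d-2}$.

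Combining these estimates gives $\sum_f D_g\rho(f)\lesssim h/\max(h,\delta_g)$, and multiplying by $\delta_g/h$ produces $\delta_g/\max(h,\delta_g)\le 1$, which is the desired bound. The main technical obstacle is the third geometric estimate -- controlling $|\Delta_g|_{d-1}$ by an annular-shell measure -- because $A_t$ and $A_{t'}$ are unions of whole polytopal faces rather than smooth discs, so one must absorb the $O(h)$ roughness on their boundaries into the shell width while still producing the $\delta_g^{d-2}$ factor; this is where the quasi-uniformity assumption is crucial, as it allows a clean comparison with Euclidean discs and controls both the mismatch $|\delta_t-\delta_{t'}|$ and the displacement $|p(x_t)-p(x_{t'})|$ by $h$.
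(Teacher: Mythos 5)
Your proposal is correct and follows essentially the same route as the paper. Both decompose the sum over $f\in\Fhb$ according to whether $f$ lies in $A_t\cap A_{t'}$, in $\Delta_g$, or outside $A_g$ (the last contributing zero), and both invoke the same geometric facts -- $|A_t|_{d-1}\simeq\delta_t^{d-1}$, $\delta_t\simeq\delta_{t'}\simeq\delta_g$, and $|\Delta_g|_{d-1}\lesssim h\delta_g^{d-2}$ -- which the paper isolates as Lemma~\ref{lem:At}. You compute the three partial sums exactly before bounding them by $|\Delta_g|_{d-1}/\min(|A_t|_{d-1},|A_{t'}|_{d-1})$, whereas the paper bounds $D_g\rho(f)$ per face via Lemma~\ref{lem:At}-\ref{lem:At.3} and \ref{lem:At.4} and then sums; this is a cosmetic difference, not a different argument. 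One small remark: your $\max(h,\delta_g)$ hedging and the aside about the regime $\delta_g\lesssim h$ are unnecessary, since $h\lesssim\delta_t\simeq\delta_g$ always holds (this is Lemma~\ref{lem:At}-\ref{lem:At.0}, a consequence of each cell containing a ball of radius $\gtrsim h$ around its centroid), so that case never arises.
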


Plugging \eqref{eq:Dgv} into the definition in \eqref{eq:lifting.internal_boundary} of $T_1$, we obtain
\begin{equation}\label{eq:lift.bound.T1}
  T_1 \lesssim   \sum_{g \in \Fhi} \sum_{f \in \Fhb} |\ol{w}_f - \ol{w}_{f'}|^2 \; D_g\rho(f) \; \frac{h^{d-1}}{\delta_g}.
\end{equation}
Since $\{ \projFace^\dagger(\fbou')  \}_{\fbou' \in \Fhb}$ partitions $\Fhi$, we can write $\sum_{g \in \Fhi} \bullet  = \sum_{f' \in \Fhb} \sum_{g \in \projFace^{\dagger}(f')} \bullet $. Recalling that $f'=\projFace(g)$ in the sum in \eqref{eq:lift.bound.T1}, we obtain the equivalent of \eqref{eq:lift.nabla.v.4}:
\begin{equation*}
  T_1 \lesssim  \sum_{(f,f')\in\FFhb} \Big(|\ol{w}_f - \ol{w}_{f'}|^2  \sum_{g \in \projFace^{\dagger}(f')} \frac{h^{d-1}}{\delta_g} D_g\rho(f)\Big).
\end{equation*}
The desired estimate $T_1 \lesssim \tnorm{1/2,h}{w_{h}}^2$ then follows from Lemma \ref{lem:Qff} below (compare with \eqref{eq:lift.estim.g}).

\begin{restatable}{lemma}{lemQff}\label{lem:Qff}
For all $(\fbou, \fbou') \in \FFhb$, the following holds: 
  $$
  \sum_{\fint \in \projFace^\dagger(f')} \frac{h^{d-1}}{\delta_\fint} D_\fint\rho(\fbou)  \lesssim  \frac{|\fbou |_{d-1} |\fbou' |_{d-1}}{\dffp^d}.
  $$
\end{restatable}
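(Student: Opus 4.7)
The plan is to establish a discrete analog of the continuous bound \eqref{eq:lift.estim.g}, in which the compact support of $D\rho$ produced the lower cutoff $x_d\gtrsim |x'-y'|$ in the integral. Here the corresponding cutoff arises because $D_g\rho(f)$ vanishes unless $f\subset \A_t\cup\A_{t'}$, which forces $\delta_g\gtrsim\dffp$ (up to an additive $O(h)$).

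I would first exploit the fact that for any $g\in\projFace^\dagger(f')$ with neighbouring cells $t,t'\in\mathcal T_g$, we have $\delta_t\simeq \delta_{t'}\simeq \delta_g$ and $|p(x_t)-x_{f'}|,|p(x_{t'})-x_{f'}|\lesssim h$, which lets me replace the cell-centric geometry of $\A_t$ by quantities indexed by $g$ and $f'$. Then I would split the summation into two regimes depending on the position of $f$ with respect to $\A_t,\A_{t'}$: the \emph{interior} case $f\subset\A_t\cap\A_{t'}$ and the \emph{boundary} case $f\subset\A_t\triangle\A_{t'}$ (the remaining terms vanish). In the interior case I write $D_g\rho(f)=|f|_{d-1}\bigl||\A_t|_{d-1}^{-1}-|\A_{t'}|_{d-1}^{-1}\bigr|$ and use the geometric estimates $|\A_t|_{d-1}\simeq\delta_g^{d-1}$ and $\bigl||\A_t|_{d-1}-|\A_{t'}|_{d-1}\bigr|\lesssim\delta_g^{d-2}h$ (the symmetric difference lies in a ring of radius $\simeq\delta_g$ and width $O(h)$) to obtain $D_g\rho(f)\lesssim|f|_{d-1}h/\delta_g^d$. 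In the boundary case, $D_g\rho(f)\simeq|f|_{d-1}/\delta_g^{d-1}$, but it only arises when $|\dist(x_{f'},f)-\delta_g|\lesssim h$, hence for $O(1)$ values of $g$, all with $\delta_g\simeq\dffp$.

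The second ingredient is the cardinality bound $\#\{g\in\projFace^\dagger(f'):mh\le\delta_g<(m+1)h\}\lesssim 1$ for every $m\ge 1$, which follows exactly as in Lemma \ref{lem:Cardinality}-\ref{lem:Lm.cardinality}: any such $g$ sits in a slab above $f'$ of volume $\lesssim h^d$, which by quasi-uniformity contains $O(1)$ cells. Summing the interior-case contributions yields the geometric sum
\[
\sum_{g:\,\delta_g\gtrsim\dffp}\frac{h^{d-1}}{\delta_g}\cdot\frac{|f|_{d-1}h}{\delta_g^d}\lesssim |f|_{d-1}\sum_{m\gtrsim\dffp/h}\frac{h^{d}}{(mh)^{d+1}}\lesssim \frac{|f|_{d-1}h^{d-1}}{\dffp^d},
\]
which is the discrete mimic of $\int_{\dffp}^{1}x_d^{-(d+1)}\,dx_d\simeq\dffp^{-d}$. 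The boundary-case contribution is similarly $\lesssim |f|_{d-1}h^{d-1}/\dffp^d$. Using $|f'|_{d-1}\simeq h^{d-1}$ to rewrite $h^{d-1}\simeq |f'|_{d-1}$ then concludes the proof.

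The main obstacle is establishing rigorously the two geometric measure estimates $|\A_t|_{d-1}\simeq\delta_g^{d-1}$ and $|\A_t\triangle\A_{t'}|_{d-1}\lesssim\delta_g^{d-2}h$ on the polytopal boundary partition, together with identifying cleanly which pairs $(g,f)$ trigger each of the two cases, and treating separately the degenerate regime $\delta_g\simeq h$ where the ring is no longer annular. Once those geometric facts are in place, the summation is a direct discrete transcription of \eqref{eq:lift.estim.g}.
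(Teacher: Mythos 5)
Your proposal follows essentially the same route as the paper: the same interior/boundary dichotomy $f\subset\A_t\cap\A_{t'}$ versus $f\subset\A_t\triangle\A_{t'}$, the same geometric estimates $|\A_t|_{d-1}\simeq\delta_g^{d-1}$, $|\A_t\triangle\A_{t'}|_{d-1}\lesssim h\delta_g^{d-2}$, and $D_g\rho(f)\lesssim |f|_{d-1}h/\delta_g^d$ (these are exactly the paper's Lemma~\ref{lem:At}), the same cut-off $\delta_g\gtrsim\dffp$ (Lemma~\ref{lem:Cases}-\ref{lem:Cases.1}) and the same layer-counting and geometric-series estimate for the interior sum, while the boundary sum is controlled by the $O(1)$ cardinality bound (Lemma~\ref{lem:Cases}-\ref{lem:Cases.2-b}). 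The geometric facts you flag as the remaining obstacle are precisely what the paper's Lemmas~\ref{lem:At} and~\ref{lem:Cases} supply, so there is no gap in the strategy itself.
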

\medskip

\textit{\ul{Bound on $T_2$}}: In order to estimate $T_2,$ we introduce $\pm\ol{w}_f$ and use a triangle inequality together with $ \norm{L^2(f)}{\ol{w}_f-v_t}^2=|f|_{d-1}|\ol{w}_f-v_t|^2\lesssim h^{d-1}|\ol{w}_f-v_t|^2$ to write
\begin{align}
  T_2 &\lesssim \sum_{f \in \Fhb} h^{-1} \norm{L^2(f)}{w_f - \ol{w}_f}^2 + \sum_{f \in \Fhb} h^{d-2} |\ol{w}_f - v_t|^2 \nonumber\\
  \label{eq:list.est.T2}
  \overset{\eqref{eq:def.disc.Hhalf}}&\lesssim  \tnorm{1/2,h}{w_{h}}^2 + \sum_{f \in \Fhb} h^{d-2} |\ol{w}_f - v_t|^2.
\end{align}
Let $f\in\Fhb$ and $t$ be the cell that contains $f$ in its boundary (which is the cell appearing in the last sum above). By \eqref{eq:reconstructed.vt}, \eqref{eq:sum.rho} and the convexity of the square function,
$$
|\ol{w}_f - v_t|^2 =\left|\sum_{f'\in\Fhb}(\ol{w}_{f}-\ol{w}_{f'})\rho_t(f')\right|^2\leq \sum_{f' \in \Fhb} |\ol{w}_f - \ol{w}_{f'}|^2\rho_t(f')
\le \sum_{f' \subset \A_t\backslash f} |\ol{w}_f - \ol{w}_{f'}|^2,
$$
where, in the conclusion, we have used $\rho_t(f')=0$ if $f'\not\subset A_t$ and $\rho_t(f')\le 1$ for all $f'\in\Fhb$.
Since $t$ is a boundary cell, we have $\delta_t=|x_t-p(x_t)|\le h$. Recalling the definition \eqref{def:At} of $\A_t$, we infer that any $f'\subset A_t$ is within distance $\lesssim h$ of $x_t$, and thus within distance $\lesssim h$ of $f$. This gives $\dffp\lesssim h$ and thus, coming back to \eqref{eq:list.est.T2}, we obtain the desired estimate:
\begin{align*}
  T_2 &\lesssim \tnorm{1/2,h}{w_h}^2 + \sum_{f \in \Fhb} \sum_{f' \subset \A_t\backslash f} |f|_{d-1} |f'|_{d-1} \frac{|\ol{w}_f - \ol{w}_{f'}|^2}{\dffp^d} \times \frac{h^{d-2} \; \dffp^d}{|f|_{d-1} |f'|_{d-1}} \\
  \overset{\text{Assum. \ref{assum:reg.mesh}}}&\lesssim \tnorm{1/2,h}{w_{h}}^2 + \sum_{f \in \Fhb} \sum_{f' \subset A_t\backslash f} |f|_{d-1} |f'|_{d-1} \frac{|\ol{w}_f - \ol{w}_{f'}|^2}{\dffp^d} 
  \times \frac{h^{2d-2}}{h^{d-1} h^{d-1}} \\
  \overset{\eqref{eq:def.disc.Hhalf}}&\lesssim \tnorm{1/2, h}{w_h}^2.
\end{align*}

\subsection{Proof of the technical lemmas}\label{sec:proof.technical.lifting}

Throughout this section, $D(x, r)$ denotes the disc on $\partial\Omega$ centered at $x$ with radius $r$.
The next lemma establishes preparatory results to prove Lemma \ref{lem:Dgrho}.

\begin{lemma}[Estimates on $A_t$]\label{lem:At}
  For all $t \in \Th$ and $g\in\Fhi$, denoting by $t,t' \in \mathcal{T}_g$ the two cells on each side of $g$, the following properties hold:
  \begin{enumerate}[label=(\roman*)]
    \item \label{lem:At.0} $h \lesssim \delta_t$, $|\delta_t-\delta_{t'}|\le 4h$ and $\delta_t\simeq \delta_{t'}\simeq \delta_g$.
    \item \label{lem:At.1} $|\A_t|_{d-1} \simeq \delta_t^{d-1}$.
    \item \label{lem:At.2} $ |\Delta_g|_{d-1}\lesssim h\delta_g^{d-2}$.
    \item \label{lem:At.3} If $f \subset \A_t \cap \A_{t'}$, then $D_g\rho(f) \lesssim \frac{h^d}{\delta_g^d}$.
    \item \label{lem:At.4} For any $f\in\Fhb$ we have $D_g\rho(f) \lesssim \frac{h^{d-1}}{\delta_g^{d-1}}$.  
  \end{enumerate}
\end{lemma}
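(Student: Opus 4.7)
The plan is to prove the five items in sequence, each relying on the previous ones together with the mesh regularity and the definition of centroids recalled in the footnote on page \pageref{eq:def.disc.Hhalf}.

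For item \ref{lem:At.0}, I will start from the fact that $t$ contains a ball of radius $\varpi h_t$ centered at $x_t$; since $t\subset\Omega$, this forces $\delta_t=|x_t-p(x_t)|\ge \varpi h_t\gtrsim h$ by quasi-uniformity. For the middle inequality, I note that $t$ and $t'$ share the face $g$, so $|x_t-x_{t'}|\le h_t+h_{t'}\le 2h$; since the orthogonal projection onto $\partial\Omega$ is $1$-Lipschitz, two applications of the reverse triangle inequality yield $|\delta_t-\delta_{t'}|\le 2|x_t-x_{t'}|\le 4h$. The comparabilities $\delta_t\simeq\delta_{t'}\simeq\delta_g$ then follow by combining $\delta_t\gtrsim h$ with the bounded differences (and the same argument applied to $g$ in place of $t'$).

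For item \ref{lem:At.1}, I will sandwich $A_t$ between two discs: any face $f\in\Fhb$ with $f\cap D(p(x_t),\delta_t-h)\neq\emptyset$ satisfies $\dist(p(x_t),f)\le\delta_t$ so belongs to $A_t$; conversely, by diameter $\le h$, every $f\subset A_t$ is contained in $D(p(x_t),\delta_t+h)$. Since $\delta_t\gtrsim h$ by \ref{lem:At.0}, both bounding radii are $\simeq\delta_t$, giving $|A_t|_{d-1}\simeq\delta_t^{d-1}$. For item \ref{lem:At.2}, using \ref{lem:At.0} to locate both $p(x_t)$ and $p(x_{t'})$ within distance $2h$ of $p(x_g)$, the same sandwich construction shows that $A_t$ and $A_{t'}$ are both contained in $D(p(x_g),\delta_g+Ch)$ and both contain $D(p(x_g),\delta_g-Ch)\cap\partial\Omega$ for some constant $C$; hence $\Delta_g$ lies in an annulus of radius $\simeq \delta_g$ and width $\simeq h$, whose measure is estimated as in \eqref{MeasureOfAnnulus}, giving $|\Delta_g|_{d-1}\lesssim h\delta_g^{d-2}$. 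This is the step I expect to be the most delicate: because the two centers $p(x_t)$ and $p(x_{t'})$ do not coincide and the two radii $\delta_t,\delta_{t'}$ differ, I must carefully choose the common center $p(x_g)$ and a sufficiently large $C$ so that the sandwich is valid simultaneously for both $A_t$ and $A_{t'}$.

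Items \ref{lem:At.3} and \ref{lem:At.4} follow by direct computation. For \ref{lem:At.3}, when $f\subset A_t\cap A_{t'}$, the definition \eqref{eq:rho} yields
\[
D_g\rho(f) = |f|_{d-1}\,\frac{\bigl||A_{t'}|_{d-1}-|A_t|_{d-1}\bigr|}{|A_t|_{d-1}\,|A_{t'}|_{d-1}}.
\]
The numerator is bounded by $|\Delta_g|_{d-1}\lesssim h\delta_g^{d-2}$ from \ref{lem:At.2}, the denominator is $\simeq\delta_g^{2(d-1)}$ from \ref{lem:At.0}--\ref{lem:At.1}, and $|f|_{d-1}\simeq h^{d-1}$ by quasi-uniformity; multiplying out gives $h^d/\delta_g^d$. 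For \ref{lem:At.4}, I note that $\rho_t(f)\le |f|_{d-1}/|A_t|_{d-1}\lesssim h^{d-1}/\delta_g^{d-1}$ whether or not $f\subset A_t$ (since $\rho_t(f)=0$ otherwise), and similarly for $\rho_{t'}$; a triangle inequality in \eqref{eq:def.Deltatt} concludes.
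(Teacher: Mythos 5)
Your overall architecture follows the paper's proof closely: item~\ref{lem:At.0} by the ball-in-cell property and Lipschitz continuity of the projection, item~\ref{lem:At.1} by sandwiching $A_t$ between concentric discs, item~\ref{lem:At.2} by trapping $\Delta_g$ in an annulus of width $\simeq h$, and items~\ref{lem:At.3}--\ref{lem:At.4} by direct computation. The only cosmetic difference is in item~\ref{lem:At.2}, where you center the annulus at $p(x_g)$ while the paper uses $p(x_t)$; both work, and you correctly flag that step as the delicate one.

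There is, however, a genuine (if local) gap in your lower bound for item~\ref{lem:At.1}. You claim $D(p(x_t),\delta_t - h)\subset A_t$ and then argue that ``since $\delta_t\gtrsim h$, both bounding radii are $\simeq\delta_t$''. This is not a valid inference: $h\lesssim\delta_t$ only gives $\delta_t\ge ch$ for some constant $c$ depending on $\varpi$ and $\varrho$, and $c$ can be well below $1$, so $\delta_t - h$ can be negative (for instance for cells adjacent to $\partial\Omega$). In that case the inner disc is empty and the sandwich yields no lower bound at all. The $-h$ is in fact unnecessary: by definition \eqref{def:At}, any $f\in\Fhb$ whose closure meets $D(p(x_t),\delta_t)$ already satisfies $\dist(p(x_t),f)\le\delta_t$ and so lies in $A_t$; hence $D(p(x_t),\delta_t)\subset A_t$ directly, giving $|A_t|_{d-1}\gtrsim\delta_t^{d-1}$ without any constraint on the size of $\delta_t$. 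This is exactly what the paper does. Note that in item~\ref{lem:At.2} the possibly-empty inner disc is harmless (the annulus estimate degenerates gracefully because $\delta_g\gtrsim h$ makes $h\delta_g^{d-2}\gtrsim h^{d-1}$), but in item~\ref{lem:At.1} you really need the sharper containment.
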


\begin{proof}[Proof of Lemma \ref{lem:At}]~\\
  \textit{Proof of \ref{lem:At.0}}: 
  By definition of the centroids, for any $t\in\Th$ there exists $r \gtrsim h$ such that the ball centered at $x_t$ with radius $r$ is contained in $t$. This ball does not intersect $\partial \dom$ and thus $r \le \delta_t$, from which we infer that	$h\lesssim \delta_t$. Since $t$ and $t'$ share the face $g$, we have $|x_t-x_{t'}|\le |x_t-x_g|+ |x_g-x_{t'}|\le 2h$.
  The orthogonal projection is 1-Lipschitz so 
  \begin{equation}\label{eq:proj.tt'}
    |p(x_t)-p(x_{t'})|\le 2h.
  \end{equation}
  Hence, 
  \[
  \delta_t=|x_t-p(x_t)|\le |x_t-x_{t'}|+|x_{t'}-p(x_{t'})|+|p(x_{t'})-p(x_t)|\le 2h+\delta_{t'}+2h.
  \]
  Swapping the roles of $t$ and $t'$, we conclude that $|\delta_t-\delta_{t'}|\le 4h$, and the relation $\delta_{t'}\simeq \delta_t$ is then obtained using the fact that $h\lesssim \delta_t$ and $h\lesssim \delta_{t'}$. To get $\delta_g\lesssim\delta_t$, we write
  \[
  \delta_g=|x_g-p(x_g)|\le |x_g-x_t|+|x_t-p(x_t)|+|p(x_t)-p(x_g)|\le 2h+\delta_t\lesssim \delta_t,
  \]
  where we have used the fact that $p$ is 1-Lipschitz in the second inequality and that $|x_g-x_t|\le h$. The reverse inequality is obtained by swapping the roles of $t$ and $g$, and by noticing that $h\lesssim \delta_g$, thanks to the mesh regularity assumption  \cite[Definition 1.9]{di-pietro.droniou:2020:hybrid} and the fact that $g$ contains a disc centered at $x_g$ and of radius $\gtrsim h$.
  
  \medskip
  \textit{Proof of \ref{lem:At.1}}: 
  For any $f\subset\A_t$, there is $x\in \mathrm{cl}{(f)}$ such that $|x-p(x_t)| \le \delta_t$. Since $\mathrm{diam}(f) \leq h$, $f$ is contained in the disc $D(p(x_t), \delta_t+h)$ on $\partial \dom$ (which has dimension $d-1$). Using the bound $h \lesssim \delta_t$ in \ref{lem:At.0}, $A_t$ is therefore contained in a disc centered at $p(x_t)$ and of radius $\lesssim\delta_t$, which implies
  \begin{equation}\label{eq:upper.bound.At}
    |A_t|_{d-1}\lesssim \delta_t^{d-1}.
  \end{equation}
  
  The disc $D(p(x_t), \delta_t)$ on $\partial \dom$ is covered by all the faces $f\in\Fhb$ that intersect it. By definition, these faces are included in $\A_t$, so this disc is contained in $A_t$ and $|A_t|_{d-1}\ge |D(p(x_t), \delta_t)|_{d-1}\gtrsim \delta_t^{d-1}$. Combined with \eqref{eq:upper.bound.At} this proves \ref{lem:At.1}.
  \medskip
  
  \textit{Proof of \ref{lem:At.2}}:   
  As shown above, $D(p(x_t), \delta_t) \subset A_t$. Let $x \in D(p(x_t), \delta_t -6h)$ and note that
  \begin{equation}\label{eq:x.in.Atp}
    |p(x_{t'})-x| \leq  |p(x_{t'}) - p(x_t)|+|p(x_t) - x| \le 2h + \delta_t - 6h 
    \leq \delta_t - 4h \leq \delta_{t'},
  \end{equation}
  where we have used \eqref{eq:proj.tt'} and the definition of the disc $D(p(x_t), \delta_t -6h)$ in the second inequality, and $|\delta_t - \delta_{t'}| \leq 4h$ from \ref{lem:At.0} in the last one. Letting $f\in\Fhb$ be a face that contains $x$ in its closure, \eqref{eq:x.in.Atp} shows that $f\subset\A_{t'}$. Hence, $D(p(x_t), \delta_t -6h) \subset A_{t'}$ and thus $D(p(x_t), \delta_t -6h) \subset A_t\cap A_{t'}$.
  
  We also saw that $A_t \subset D(p(x_t), \delta_t + h)$ in the proof of \eqref{eq:upper.bound.At}. Following the same arguments as above, we infer that $A_{t'} \subset D(p(x_t), \delta_t + 7h)$. Hence, $A_t\cup A_{t'} \subset D(p(x_t), \delta_t + 7h)$.
  
  These relations show that $\Delta_g=(A_t\cup A_{t'})\backslash (A_t\cap A_{t'})$ is contained in the annulus $\An{r_1}{r_2}{p(x_t)}$ with $r_1=\delta_t+7h$ and $r_2=\delta_t-6 h$. We have $r_1\lesssim \delta_g$ by \ref{lem:At.0} and $r_1-r_2=13h$, so the estimate \eqref{MeasureOfAnnulus} on the measure of $\An{r_1}{r_2}{p(x_t)}$ yields $|\Delta_g|_{d-1}\lesssim h\delta_g^{d-2}$.
  
  \medskip	
  \textit{Proof of \ref{lem:At.3}}: 
  For $f \subset \A_t \cap \A_{t'}$, the definition \eqref{eq:rho} of the weights yields
  \begin{equation}\label{eq:bound.Dg.init}
    D_g\rho(f) = \left|\frac{|f|_{d-1}}{|\A_t|_{d-1}} - \frac{|f|_{d-1}}{|\A_{t'}|_{d-1}}\right| = |f|_{d-1}\frac{\big| |\A_{t'}|_{d-1} - |\A_{t}|_{d-1}\big|}{|\A_{t'}|_{d-1}|\A_t|_{d-1}}.
  \end{equation}
  Since $\A_t\subset \Delta_g \cup \A_{t'}$, we have $|\A_t|_{d-1}\le |\Delta_g|_{d-1}+|\A_{t'}|_{d-1}$. Swapping the roles of $t$ and $t'$ and using \ref{lem:At.2} we infer $\big| |\A_t|_{d-1} - |\A_{t'}|_{d-1}\big| \leq |\Delta_g|_{d-1} \lesssim h\delta_g^{d-2}$.
  By \ref{lem:At.1} and \ref{lem:At.0}, we have $|\A_{t'}|_{d-1}\simeq |\A_{t}|_{d-1}\simeq \delta_g^{d-1}$. Plugging these estimates in \eqref{eq:bound.Dg.init} and using $|f|_{d-1}\lesssim h^{d-1}$ concludes the proof of \ref{lem:At.3}.
  
  \medskip
  \textit{Proof of \ref{lem:At.4}}: Direct consequence of the definitions \eqref{eq:def.Deltatt} and \eqref{eq:rho} of $D_g\rho(f)$ and $\rho_t(f)$, the triangle inequality, $|f|_{d-1}\lesssim h^{d-1}$ and \ref{lem:At.0}--\ref{lem:At.1}.
  
  \end{proof}
  
\lemDgrho*

\begin{proof}[Proof of Lemma \ref{lem:Dgrho}]  
  We write
  $$
    \sum_{f \in \Fhb} D_g\rho(f) = \sum_{f \subset \A_g \backslash \Delta_g} D_g\rho(f) + \sum_{f \subset \Delta_g} D_g\rho(f) + \sum_{f \in \Fhb,\;f\not\subset \A_g} D_g\rho(f).
  $$ 
  Recalling that $A_g=A_t\cup A_{t'}$ (with $t,t'$ the two cells on each side of $g$), the definitions \eqref{eq:def.Deltatt} of $D_g\rho(f)$ and \eqref{eq:rho} of $\rho_t(f)$ show that the last term in the above equality vanishes. Noting that $\A_g \backslash \Delta_g=\A_t\cap \A_{t'}$, an application of Lemma \ref{lem:At}-(\ref{lem:At.3},\ref{lem:At.4}) then yields
  \begin{align*}
    \sum_{f \in \Fhb} D_g\rho(f) &\lesssim  \sum_{f \subset \A_g \backslash \Delta_g} \frac{h^d}{\delta_g^d} + \sum_{f \subset \Delta_g} \frac{h^{d-1}}{\delta_g^{d-1}}\\
    \overset{\text{Assum. \ref{assum:reg.mesh}}}&\lesssim  \frac{h}{\delta_g^d}\sum_{f \subset \A_g \backslash \Delta_g} |f|_{d-1} + \frac{1}{\delta_g^{d-1}}\sum_{f \subset \Delta_g} |f|_{d-1}\\
    &\le  \frac{h}{\delta_g^d}|A_t|_{d-1}+\frac{1}{\delta_g^{d-1}}|\Delta_g|_{d-1}\lesssim \frac{h}{\delta_g},
  \end{align*}    
  where the conclusion follows from Lemma \ref{lem:At}-(\ref{lem:At.0},\ref{lem:At.1},\ref{lem:At.2}).
\end{proof}  
 
 The following lemma states intermediate results that will be useful to prove Lemma \ref{lem:Qff}.
 
 \begin{lemma}\label{lem:Cases}
  The following results hold for any $\fint\in\Fhi$ :
  \begin{enumerate}[label=(\roman*)] 
    \item \label{lem:Cases.1} If $\fbou  \subset A_\fint$, then $|x_{\fbou}-x_{\projFace(\fint)}|\lesssim \delta_{\fint}$.
    \item \label{lem:Cases.2-b} For all $\fbou, \fbou' \in \Fhb$, $\# \{ \fint \in \Fhi \, : \,  \fbou' = \projFace(\fint), \fbou \subset \Delta_\fint \}\lesssim 1$. 
  \end{enumerate}
\end{lemma}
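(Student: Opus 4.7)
The plan is to prove each part by directly chasing distances with the triangle inequality and then, for (ii), leveraging mesh quasi-uniformity to convert a localisation of $x_g$ into a cardinality bound, in the same spirit as Lemma \ref{lem:Cardinality}.

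For (i), I would first write $f \subset A_t$ or $f \subset A_{t'}$, where $t, t'$ are the two cells of $\mathcal{T}_g$, and pick the corresponding side, say $s \in \{t,t'\}$. By the definition \eqref{def:At} of $A_s$ there exists $y \in \mathrm{cl}(f)$ with $|y - p(x_s)| \le \delta_s$. Then I would chain $|x_f - x_{\projFace(g)}| \le |x_f - y| + |y - p(x_s)| + |p(x_s) - p(x_g)| + |p(x_g) - x_{\projFace(g)}|$. The first and last terms are $\lesssim h$ (bounded diameters of faces), the third is $\le |x_s - x_g| \le h$ by the $1$-Lipschitz property of $p$, and the second is $\le \delta_s$. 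Combining with Lemma \ref{lem:At}-\ref{lem:At.0} (namely $h \lesssim \delta_g$ and $\delta_s \simeq \delta_g$) gives the claimed bound $\lesssim \delta_g$.

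For (ii), the strategy is to show that the two constraints $\projFace(g) = f'$ and $f \subset \Delta_g$ pin $x_g$ down to a ball of radius $\lesssim h$; quasi-uniformity then forces $\lesssim 1$ such internal faces. From $\projFace(g) = f'$ I get $p(x_g) \in \mathrm{cl}(f')$, hence $|p(x_g) - x_{f'}| \lesssim h$. From the proof of Lemma \ref{lem:At}-\ref{lem:At.2}, $\Delta_g$ is contained in the annulus $\An{\delta_t + 7h}{\delta_t - 6h}{p(x_t)}$, so if $f \subset \Delta_g$ then $\big||x_f - p(x_t)| - \delta_t\big| \lesssim h$. Combined with $|p(x_t) - p(x_g)| \le h$, $|p(x_g) - x_{f'}| \lesssim h$ and $\delta_t \simeq \delta_g$ (Lemma \ref{lem:At}-\ref{lem:At.0}), this yields $\big||x_f - x_{f'}| - \delta_g\big| \lesssim h$. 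Since on a flat side of the cube $x_g = p(x_g) + \delta_g \, \mathbf{n}_{\partial \Omega}$, the point $x_g$ lies in a fixed ball of radius $\lesssim h$ determined by $f$ and $f'$.

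The last step is a standard counting argument: a ball of radius $\lesssim h$ in $\Omega$ intersects $\lesssim 1$ cells by quasi-uniformity, and each cell has $\lesssim 1$ faces, so the number of admissible $g$ is $\lesssim 1$. The mildly delicate point is the constant chase in (ii) ensuring that $\delta_g$ is pinned up to $O(h)$ by $|x_f - x_{f'}|$; everything else is a direct application of the triangle inequality and Lemma \ref{lem:At}.
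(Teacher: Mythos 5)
Your proof of part (i) is essentially the paper's argument: bound $|x_f - x_{\projFace(g)}|$ by a chain of triangle inequalities through $p(x_s)$ and $p(x_g)$, control the $O(h)$ terms via face diameters and the 1-Lipschitz property of $p$, and use Lemma \ref{lem:At}-\ref{lem:At.0} to absorb everything into $\delta_g$. Your explicit choice of the side $s\in\{t,t'\}$ for which $f\subset A_s$ is in fact a welcome clarification of the paper's slightly loose ``$f\subset A_g\subset A_t$''.

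For part (ii) you take a slight variant of the paper's route. The paper re-derives lower and upper bounds on $\dffp$ in terms of $\delta_t$ (using the case split $f\subset A_t\setminus A_{t'}$ or $f\subset A_{t'}\setminus A_t$) and then pins the \emph{cell} centroid $x_t$ to a small cylinder, whereas you re-use the annulus containment of $\Delta_g$ established inside the proof of Lemma \ref{lem:At}-\ref{lem:At.2} and then pin the \emph{face} centroid $x_g$ to a small ball. Both end with the same counting step. Your version is marginally cleaner in that it leverages an already-derived geometric fact rather than re-chasing constants.

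There is, however, one precision issue worth fixing. To pass from $\big||x_f-x_{f'}|-\delta_t\big|\lesssim h$ to $\big||x_f-x_{f'}|-\delta_g\big|\lesssim h$ you invoke ``$\delta_t\simeq\delta_g$ (Lemma \ref{lem:At}-\ref{lem:At.0})''. But $\simeq$ is a multiplicative equivalence and does not, by itself, give the \emph{additive} bound $|\delta_t-\delta_g|\lesssim h$ that this step requires. The additive bound is actually true (the proof of Lemma \ref{lem:At}-\ref{lem:At.0} shows $|\delta_t-\delta_g|\le 2h$), but the lemma statement only records $\simeq$. So you should either cite that stronger, additive fact explicitly, or sidestep the issue as the paper does by pinning $x_t$ (whose distance to $\partial\Omega$ is $\delta_t$ by definition) rather than $x_g$, so that no $\delta_t$-to-$\delta_g$ conversion is needed. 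With that small patch your proof is correct.
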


\medskip

\begin{proof}[Proof of Lemma \ref{lem:Cases}]~\\
  \textit{Proof of \ref{lem:Cases.1}}: 
  For simplicity of notations, let us set $\fbou' \doteq \projFace(\fint)$. Given ${\fbou} \subset \A_\fint$ and $t \in \mathcal{T}_\fint$, we write
  \begin{align}\label{delff'}
    |x_\fbou - x_{\fbou'}| 
    \leq |x_\fbou - p(x_t)| + |p(x_t) -  x_{\fbou'}|. 	
  \end{align}
  The projection $p(x_{\fint})$ belongs to $\mathrm{cl}(\fbou')$ by definition of $\projFace$, so $|p(x_{\fint}) - x_{\fbou'}| \leq h$. We also have $|x_{t} - x_{\fint}| \leq h$, hence $|p(x_t) - p(x_{\fint})| \leq h$ since $p$ is 1-Lipschitz. These estimates show that
  \begin{equation}\label{eq:pxtxf'}
    |p(x_t) -  x_{\fbou'}|\le |p(x_t) - p(x_{\fint})| + |p(x_{\fint}) - x_{\fbou'}| \leq 2h.
  \end{equation}
  The condition $\fbou \subset A_\fint \subset A_t$, implies $\dist(p(x_t),\fbou) \leq \delta_t$. Moreover, $\mathrm{diam}(\fbou) \leq h$ and $x_{\fbou} \in \fbou$, so
  \begin{align}\label{eq:u.dist(pxt,xf)}
    |p(x_t) - x_{\fbou}| \leq \delta_t + h.
  \end{align}
  Plugging \eqref{eq:pxtxf'} and \eqref{eq:u.dist(pxt,xf)} into \eqref{delff'} and using Lemma \ref{lem:At}-\ref{lem:At.0} we conclude 
  \begin{align}\label{eq:u.dff'-a}
    \dffp \leq \delta_t + 3h \lesssim \delta_t\lesssim \delta_g.
  \end{align}
  
  \medskip
  \textit{Proof of \ref{lem:Cases.2-b}}. Let $\fbou, \fbou' \in \Fhb$ and $g \in \Fhi$ such that $\projFace(\fint) = f'$ and $f \subset \Delta_g$. Let $t, t'$ be the two cells in $\mathcal{T}_\fint$. By definition of the symmetric difference,
  $$
    \fbou \subset \Delta_g=(A_{t} \backslash A_{t'}) \cup (A_{t'} \backslash A_{t}).
  $$  
  We first consider the case $\fbou \subset A_t \backslash A_{t'}$. The condition $\fbou \not\subset \A_{t'}$ implies that $|p(x_{t'}) - x_{\fbou}| > \delta_{t'}$, and thus
  \begin{align*}
    \delta_{t'} < |p(x_{t'}) - x_{\fbou}| \leq |p(x_{t'}) - p(x_{t})| + |p(x_{t}) - x_{\fbou}| \overset{\eqref{eq:proj.tt'}}\leq 2h + |p(x_{t}) - x_{\fbou}|.
  \end{align*}
  Lemma \ref{lem:At}-\ref{lem:At.0} then gives
  \begin{align*}
    |p(x_{t}) - x_{\fbou}| \geq \delta_{t'} - 2h \geq \delta_{t} - 6h.
  \end{align*}
Recalling \eqref{eq:pxtxf'}, we are able to derive a lower bound for $\dffp$ as follows:
\begin{align*}
  \dffp \geq |x_f - p(x_t)| - |p(x_t) - x_{f'}| \geq \delta_t - 6h - 2h = \delta_t - 8h.
\end{align*}
Together with \eqref{eq:u.dff'-a}, this implies
\begin{align*}
  \delta_t - 8h \leq \dffp \leq \delta_t + 3h
\end{align*}
and thus, recalling the definition of $\delta_t$ and rearranging terms,
\begin{align*}
  \dffp - 3h \leq |x_{t} - p(x_t)| \leq \dffp + 8h.
\end{align*}
Moreover, by \eqref{eq:pxtxf'}, we have  $p(x_t) \in D(x_{f'}, 2h)$. So $x_t$ is in the cylinder vertical to $\partial\dom$, with base $D(x_{f'}, 2h)$ and between heights $\dffp - 3h$ and $\dffp + 8h$ from $\partial \dom$.
We can proceed analogously for $\fbou \subset  \A_{t'} \backslash \A_t$. As a consequence, any $g \in \Fhi$ such that $\projFace(\fint) = f'$ and $f \subset \Delta_g$ has a neighbouring cell $t$ such that $x_t$ lies in a cylinder of base a disc of radius $\simeq h$ and with height $\simeq h$. Such cells $t$ are entirely contained in the cylinder obtained by enlarging the base and height by $h$ and, reasoning as in the proof of Lemma \ref{lem:Cardinality}-\ref{lem:Lm.cardinality}, we obtain a bound $\lesssim 1$ on the number of these cells $t$. As $\#\mathcal F_t\lesssim 1$ by mesh regularity assumption, this leads to the required bound on the number of considered $\fint$.
\end{proof}

\lemQff* 

\begin{proof}[Proof of Lemma \ref{lem:Qff}]
  Let $Q_{\ffp}$ be the left-hand side of the inequality in the lemma. We first split this sum according to which set $\fbou $ belongs to: 	\begin{align}\label{eq:Qff}
    Q_\ffp &= \sum_{\substack{ \fint \in \projFace^\dagger (\fbou') \\ \text{s.t.} \; \fbou  \subset \A_\fint \setminus \Delta_\fint}} \frac{h^{d-1}}{\delta_\fint} D_\fint\rho(\fbou ) + \sum_{\substack{\fint \in \projFace^\dagger (\fbou' ) \\ \text{s.t.} \; \fbou  \subset \Delta_\fint }} \frac{h^{d-1}}{\delta_\fint} D_\fint\rho(\fbou ) + \sum_{\substack{\fint \in \projFace^\dagger (\fbou' ) \\ \text{s.t.} \; {\fbou  \in \Fhb,\,\fbou\not\subset A_\fint }}} \frac{h^{d-1}}{\delta_\fint} D_\fint\rho(\fbou )  \nonumber\\
    &\doteq Q_{1} + Q_{2} + Q_3.
  \end{align}  
  If $\fbou \in \Fhb$ and $\fbou\not\subset A_{\fint}$ the definitions \eqref{eq:def.Deltatt} and \eqref{eq:rho} imply that $D_\fint\rho(\fbou)$ vanishes, thus $Q_3=0$. For the sum in $Q_1,$ recalling that $\A_g\backslash\Delta_g=\A_t\cap \A_{t'}$ we apply Lemma \ref{lem:At}-(\ref{lem:At.0},\ref{lem:At.3}) then exploit the quasi-uniformity of the mesh (which gives $|\fbou |_{d-1}|\fbou' |_{d-1}\simeq h^{2d-2}$) to get
  \begin{align*}
    Q_1 \lesssim |\fbou |_{d-1} |\fbou' |_{d-1} 
    \sum_{\substack{ \fint \in \projFace^\dagger (\fbou' ) \\ \text{s.t.} \; \fbou  \subset \A_\fint \setminus \Delta_\fint}} \frac{h}{\delta_\fint^{d+1}}.  
  \end{align*} 
  We know from Lemma \ref{lem:Cases}-\ref{lem:Cases.1} that $\dffp \lesssim \delta_\fint$ for all $\fint$ in the sum above (for which $\projFace(\fint)=f'$). Furthermore, due to Remark \ref{scaling.dom}, we also have $\delta_\fint \lesssim 1$. Using this we decompose the sum over $\delta_\fint \in [lh, (l+1)h)$ for $l = l_0, \dots, L$ with $l_0$ and $L$ such that $l_0h \simeq \dffp$ and $L h \simeq 1$ (upon adjusting the hidden constant in the first $\simeq$ we can assume that $l_0\ge 2$). The cells $t \in \mathcal{T}_\fint$ such that $\fint \in\projFace^\dagger(\fbou' )$ and $\delta_\fint \in [lh,(l+1)h)$ are contained in an $h$-enlargement of the cylinder at the vertical of $\fbou'$ and between heights $lh$ and $(l+1)h$; the arguments in the proof of Lemma \ref{lem:Cardinality}-\ref{lem:Lm.cardinality} show that we have $\lesssim 1$ such cells, which means that we also have $\lesssim 1$ possible $\fint$, and thus
  $$
  \sum_{\substack{ \fint \in \projFace^\dagger (\fbou' ) \\ \text{s.t.} \; \fbou  \subset \A_\fint \setminus \Delta_\fint}} \frac{h}{\delta_\fint^{d+1}} = \sum_{l=l_0}^L \sum_{\substack{ \fint \in \projFace^\dagger (\fbou' ) \\ \text{s.t.} \; \fbou  \subset \A_\fint \setminus \Delta_\fint \\ \text{ and } \delta_\fint \in [lh, (l+1)h) }} \frac{h}{\delta_\fint^{d+1}} 
  \lesssim \sum_{l = l_0}^{L} \frac{h}{(lh)^{d+1}}.
  $$
  We estimate the last sum by writing
  \begin{equation*}
    \sum_{l=l_0}^{L} \frac{h}{(lh)^{d+1}} \leq \sum_{l=l_0}^{L} \int_{(l-1)h}^{lh} \frac{dx}{x^{d+1}} 
    = \int_{(l_0 - 1) h}^{L h} \frac{dx}{x^{d+1}} 
    \leq \frac{1}{d[(l_0-1)h]^d} 
    \leq {\frac1d}\left(\frac{l_0}{l_0-1}\right)^d \frac{1}{(l_0 h)^d} \lesssim \frac{1}{\dffp^d}. 
  \end{equation*}
  In conclusion, 
  $$
    Q_1 \lesssim   \frac{|f|_{d-1} |f'|_{d-1}}{\dffp^d}.
  $$ 	
  Lemma \ref{lem:At}-\ref{lem:At.4} and Lemma \ref{lem:Cases} lead to the following bound for the sum in $Q_2$:
  \begin{align*}
    Q_2 &\lesssim \sum_{\substack{\fint \in \projFace^\dagger (\fbou' ) \\ \text{s.t.} \; \fbou  \subset \Delta_\fint }} \frac{h^{d-1}}{\delta_\fint} \times \frac{h^{d-1}}{\delta_\fint^{d-1}} 
    \lesssim  \frac{|f|_{d-1} |f'|_{d-1}}{\dffp^d}.
  \end{align*}
  Putting the bounds for $Q_1$ and $Q_2$ in \eqref{eq:Qff} concludes the proof.
\end{proof}

\section{Extensions}\label{sec:extensions}

\subsection{Proof of the discrete trace and lifting inequalities in polytopal domains}\label{sec:trace.lift.polytopalcase} 

The idea to extend the proofs of the trace and lifting properties to the case of a generic polytope $\dom$ is as follows: (i) we select an atlas of a neighbourhood of $\partial \dom$ such that, in each chart, $\dom$ is the hypergraph of a Lipschitz function, (ii) with minor adaptations of the proofs made in Sections \ref{sec:proof.trace.cube} and \ref{sec:proof.lifting.cube} in the case of a cube, we show how the trace/lifting property can be obtained in each chart, and (iii) we glue together these local properties to obtain global trace/lifting properties.

This section is divided into three subsections. In Section \ref{sec:adaptation.local} we discuss the choice of the atlas and the adjustments to the definitions (in particular of the distances to the boundary and on the boundary) that need to be done, compared to the case of a cube, to establish trace and lifting properties in each chart.
The gluing of the local trace inequalities is then detailed in Section \ref{sec:polytopal.trace}, while that of the local liftings is addressed in Section \ref{sec:polytopal.lifting}.

\subsubsection{Localisation}\label{sec:adaptation.local}

Consider a generic polytope $\dom$. We can cover $\partial \dom$ by a finite family of open sets $(U_i)_{i \in I}$ such that for all $i \in I$,
\begin{align*}
  \dom \cap U_i= \left\{(x',x_d) \in V_i \times J_i: x_d > \phi_i(x') \right\},
\end{align*}
where $\phi_i: V_i \longrightarrow J_i$ is a Lipschitz function and $U_i=V_i\times J_i$ (see Figure \ref{fig:Polytopal-1-2} for an illustration). We note that, by regularity of $\dom$, we have $\# I\lesssim 1$ and that the Lipschitz constant of each $\phi_i$ is $\lesssim 1$.

\begin{figure}[htbp!]%
  \begin{tabular}{cc}
  \includegraphics[width=.3\linewidth]{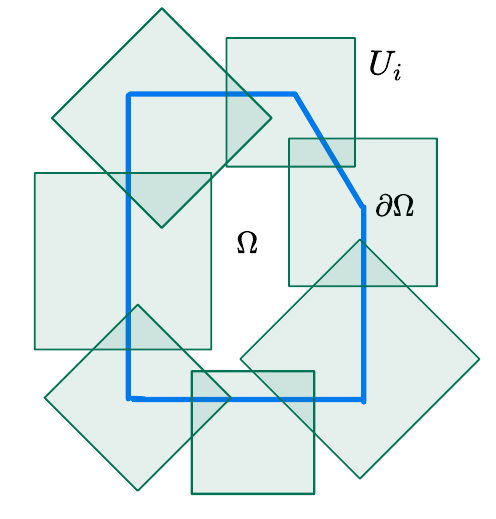} & 
  \includegraphics[width=.5\linewidth]{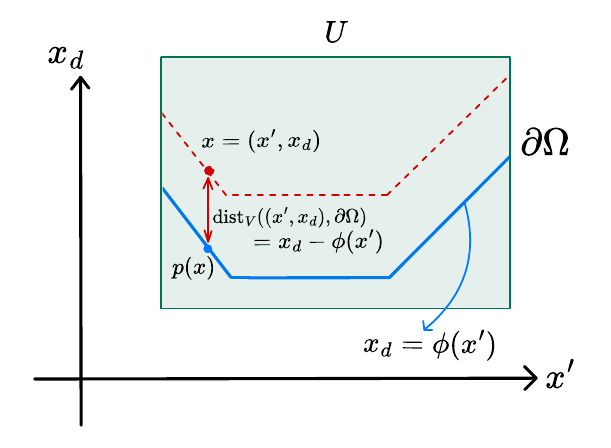}\\
  (A) & (B)
  \end{tabular}
  \caption{Illustration of localisation and chart on the boundary of a polytopal domain.}
  \label{fig:Polytopal-1-2}
\end{figure}

Let us describe the main adaptations to the construction made in Sections \ref{sec:proof.trace.cube}--\ref{sec:proof.lifting.cube} to establish the local trace and lifting property in each $\dom \cap U_i$. We momentarily drop the index $i$ as we work in a single map.

\medskip

\textbf{C1.} \ul{The notion of distances to/along $\partial \dom$}: In $\dom \cap U$, the distances to $\partial \dom$ (e.g., $\delta_t$) are expressed as \emph{vertical} distances (see Figure \ref{fig:Polytopal-1-2}-(B)), that is, for $x=(x',x_d)\in U$,
\begin{equation}\label{eq:dist.vertical.polytopal}
  \distV((x',x_d), \partial \dom) \doteq x_d - \phi(x').
\end{equation}
Accordingly, the projections $p(x_s)$ (for $s=t,g$) are also considered vertically in the coordinate system: if $x_t=(x'_t,x_{t,d})$ then $p(x_t)=(x'_t,\phi(x'_t))$.

Distances measured along $\partial \dom$ in the flat case (e.g., $\dist(p(x_t),f)$ in \eqref{def:At}) are now measured \emph{horizontally} on the $x'$-axis. For example, for $x=(x',x_d)\in\partial \dom\cap U$ and $y=(y',y_d)\in\partial \dom\cap U$,
\begin{equation}\label{eq:dist.dOmega.polytopal}
  \distH(x,y)\doteq |x'-y'|.
\end{equation}
Since $\partial \dom \cap U$ is the graph of a function, this formula defines a distance on that set.
As a consequence of this definition, if $f\in\Fhb$ is contained in $U$ and $x\in\partial\dom$, we set
\begin{equation}\label{eq:def.dist.dO}
  \distH(x,f)\doteq \min_{y\in f}\distH(x,y)=\min_{(y',y_d)\in f}|x'-y'|.
\end{equation}
With these notions of distances, for $t\in\Th$ such that $x_t\in U$ the local partition $\A_t$ is defined by \eqref{def:At} but restricting to the faces with centroids in $U$: 
$$
\A_t \doteq \bigcup \{ \mathrm{cl}(f): \text{$f \in \Fhb$, $x_f \in U$ and $\distH(p(x_t),f)\leq \delta_t$}\}.
$$
To show that this does not affect the results proved for $\A_t$ and related sets in Section \ref{sec:proof.technical.lifting} in the case of a cube, we show as an example that $\Delta_g$ still satisfies Lemma \ref{lem:At}-\ref{lem:At.2} in this polytopal case.

\begin{proposition}[Estimate on $\Delta_g$]
With the definitions above, we have $|\Delta_g|\lesssim h\delta_g^{d-2}$ for all $\fint\in\Fhi$.
\end{proposition}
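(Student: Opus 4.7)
The plan is to mimic the proof of Lemma \ref{lem:At}-\ref{lem:At.2} in the case of a cube, while systematically replacing the Euclidean disc and annulus on a flat side of $\partial\dom$ by their ``horizontal'' counterparts defined through $\distH$ (see \eqref{eq:dist.dOmega.polytopal}--\eqref{eq:def.dist.dO}). Concretely, for $x\in\partial\dom\cap U$ and $r>0$ I would set
\[
D_H(x,r)\doteq \{y\in\partial\dom\cap U:\distH(x,y)\le r\}\quad\text{and}\quad \An{r_1}{r_2}{x}_H\doteq D_H(x,r_1)\setminus D_H(x,r_2).
\]
Since $\partial\dom\cap U$ is the graph of the Lipschitz function $\phi$ (with Lipschitz constant $\lesssim 1$), the projection $(x',\phi(x'))\mapsto x'$ distorts the $(d-1)$-dimensional Hausdorff measure by a uniformly bounded factor $\sqrt{1+\mathrm{Lip}(\phi)^2}\lesssim 1$. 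Consequently, the measure estimate \eqref{MeasureOfAnnulus} that we used on a flat side of the cube transposes to this setting as $|\An{r_1}{r_2}{x}_H|_{d-1}\lesssim r_1^{d-2}(r_1-r_2)$.

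Next, I would re-establish the analogue of Lemma \ref{lem:At}-\ref{lem:At.0} in this chart. The relation $|p(x_t)-p(x_{t'})|\lesssim h$, used crucially in the flat case, now follows from the explicit form $p(x_s)=(x'_s,\phi(x'_s))$ together with $|x_t-x_{t'}|\le 2h$ and the Lipschitz property of $\phi$: indeed, $|p(x_t)-p(x_{t'})|\le \sqrt{1+\mathrm{Lip}(\phi)^2}\,|x'_t-x'_{t'}|\lesssim h$. The bounds $h\lesssim\delta_t$ and $\delta_t\simeq\delta_{t'}\simeq\delta_g$ are obtained verbatim as in Section \ref{sec:proof.technical.lifting}, using that $\delta_s$ is now the \emph{vertical} distance \eqref{eq:dist.vertical.polytopal} and exploiting $|x_t-x_g|\le h$, $|x_t-x_{t'}|\le 2h$ componentwise.

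With these ingredients at hand, I would then replay the two set-theoretic inclusions that drive the proof in the cube case. For any $x\in D_H(p(x_t),\delta_t-Ch)$ with $C$ large enough (depending only on the Lipschitz constant of $\phi$), the triangle inequality for $\distH$ together with the bound on $|p(x_t)-p(x_{t'})|$ and $|\delta_t-\delta_{t'}|\lesssim h$ yields $\distH(p(x_{t'}),x)\le \delta_{t'}$, so that any boundary face whose closure contains $x$ sits in $\A_{t'}$; hence $D_H(p(x_t),\delta_t-Ch)\subset \A_t\cap\A_{t'}$. Similarly, $\A_t\cup \A_{t'}\subset D_H(p(x_t),\delta_t+Ch)$. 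Combining these two inclusions gives $\Delta_g\subset \An{\delta_t+Ch}{\delta_t-Ch}{p(x_t)}_H$, and the annulus measure estimate together with $\delta_t\simeq \delta_g$ yields $|\Delta_g|_{d-1}\lesssim h\delta_g^{d-2}$, as required.

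The main technical point to watch out for is the interplay between $\distH$ (which is only a horizontal pseudo-metric) and the true surface measure on the Lipschitz graph $\partial\dom\cap U$; the bounded Lipschitz constant of $\phi$ is what makes the two notions equivalent up to a universal factor, and thus permits a verbatim transport of all the measure-theoretic estimates from the flat case. Once this is codified once (e.g., by stating a small preparatory sub-lemma on $|\An{r_1}{r_2}{x}_H|_{d-1}$), the remaining results of Section \ref{sec:proof.technical.lifting} (and in particular the other items of Lemma \ref{lem:At}) extend to the chart $\dom\cap U$ with no further conceptual change.
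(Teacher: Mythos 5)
Your proposal is correct and follows essentially the same route as the paper: establish $|\delta_t-\delta_{t'}|\lesssim h$ using the explicit form $p(x_s)=(x_s',\phi(x_s'))$ and the Lipschitz constant of $\phi$, then trap $\Delta_g$ in a horizontal annulus of width $\simeq h$ and radius $\simeq\delta_g$, and conclude with the (graph-transported) annulus measure estimate. The only cosmetic difference is that you proceed via the two inclusions $D_H(p(x_t),\delta_t-Ch)\subset A_t\cap A_{t'}$ and $A_t\cup A_{t'}\subset D_H(p(x_t),\delta_t+Ch)$, mirroring the cube-case proof of Lemma~\ref{lem:At}-\ref{lem:At.2}, whereas the paper bounds $\A_t\setminus\A_{t'}$ directly; the underlying estimates are identical.
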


\begin{proof}
Let $g \in \Fhi$ and $t,t' \in \mathcal{T}_g$. If $f \subset \A_t \backslash \A_{t'}$ then $\distH(x_f,p(x_t)) \leq \delta_t+h$ and $\distH(x_f,p(x_{t'})) > \delta_{t'}$. Due to Assumption \ref{assum:reg.mesh} we also have, since $t$ and $t'$ share a face and accounting for the definition \eqref{eq:dist.dOmega.polytopal} of distances along $\partial\dom\cap U$,
\begin{align}\label{eq:poly.eq1}
  2h \geq |x_{t} - x_{t'}| = \left(|x'_{t} - x'_{t'}|^2 + |x_{t,d} -x_{t',d}|^2\right)^{1/2} \geq |x'_t - x'_{t'}| = \distH(p(x_t),p(x_{t'})).
\end{align}
So
\begin{equation}\label{eq:poly.eq2}
  \delta_{t'}<\distH(x_f,p(x_{t'})) \leq \distH(x_f,p(x_t)) + \distH(p(x_t), p(x_{t'})) \leq \distH(x_f, p(x_t)) + 2h.
\end{equation}
Moreover, recalling that $\delta_{t'}$ is defined through the vertical distance \eqref{eq:dist.vertical.polytopal},
\begin{align}\label{eq:poly.eq3}
  \delta_{t'} = x_{t',d} - \phi(x'_{t'}) = (x_{t',d} - x_{t,d}) + \underbrace{(x_{t,d} - \phi(x'_t))}_{=\delta_t} + (\phi(x'_t) - \phi(x'_{t'})).
\end{align}
Denoting by $L_\phi$ the Lipschitz constant of $\phi$  we have $|\phi(x'_t) - \phi(x'_{t'})| \leq L_{\phi} |x'_{t} - x'_{t'}|$. Then, by \eqref{eq:poly.eq1},
$$
|x_{t',d} - x_{t,d}| + L_{\phi} |x'_t - x'_{t'}| \leq \sqrt{2} \max\{1,L_{\phi}\} \left(|x_{t',d} - x_{t,d}|^2 + |x'_t - x'_{t'}|^2\right)^{1/2} 
\leq 2\sqrt{2} \max\{1,L_{\phi}\} h.
$$
Substituting the above bounds in \eqref{eq:poly.eq3} gives $\delta_{t'} \geq \delta_t - 2\sqrt{2} \max\{1,L_{\phi}\} h$ which, plugged into \eqref{eq:poly.eq2}, leads to 
$$
\distH(x_f, p(x_t)) \geq \delta_t - 2(1+\sqrt{2} \max\{1,L_{\phi}\}) h.
$$
To summarise, for all $f \subset \A_t \backslash \A_{t'}$ we have $x_f \in \An{r_1}{r_2}{p(x_t)}$ (the annulus being defined along $\partial\dom\cap U$ based on the definition \eqref{eq:dist.dOmega.polytopal} of $\distH$ on that set) with $r_1 = \delta_t+h$, $r_2 = \delta_t - C_0 h$ and $C_0 = 2(1+\sqrt{2} \max\{1,L_{\phi}\})$.  By definition of the distance along the boundary, we have $\distH(x_f,y)\le h$ for all $y\in f$, so $f\subset \An{r_1+h}{r_2-h}{p(x_t)}$. Hence, $\A_t \backslash \A_{t'}$ is contained in this annulus, and
\begin{align*}
  | (\A_t \backslash \A_{t'}) |_{d-1}  \lesssim (\delta_t + h)^{d-2}\times(C_0 + 2)h \lesssim h \delta_t^{d-2}.
\end{align*}
Here the measure of the annulus is the $(d-1)$-dimensional measure along $\partial \dom\cap U$ -- for which  \eqref{MeasureOfAnnulus} is still valid (up to a multiplicative constant depending only on $\phi$) -- and we have used Lemma \ref{lem:At}--\ref{lem:At.0} to write $h\lesssim \delta_t$. Swapping the roles of $t$ and $t'$, using again Lemma \ref{lem:At}--\ref{lem:At.0} to get $\delta_t\simeq \delta_g$, and recalling that $\Delta_g = (\A_t \backslash \A_{t'}) \cup (\A_{t'} \backslash \A_{t})$ concludes the proof. 
\end{proof}

\medskip

\textbf{C2.} \ul{Layers $\La_m$}: The layers $\La_m$ are still given by \eqref{set:Layers} but with $\delta_t$ defined using the vertical distance \eqref{eq:dist.vertical.polytopal}, which leads to
\begin{align*}
  \La_m = \left\{t\in\Th: x_t=(x_t',x_{t,d})\in U\,,\;\phi(x_t')+mh\le x_{t,d}<\phi(x_t')+(m+1)h\right\}.
\end{align*} 
In other words, the layers are taken following the graph $\phi$ at a given height (see Figure \ref{fig:Polytopal-A2-A3}). 

\medskip

\textbf{C3.} \ul{Selecting cells}: Line segments along which we select cells (for example in the proof of the trace inequality) are either taken vertically above a fixed coordinate in $V$, as in the case of a cube, or following the graph of $\phi$ at a given height for lines that were ``horizontal'' in the cube (see, again, Figure \ref{fig:Polytopal-A2-A3}). With our definition of distance $\distV$ to the boundary, this ensures for example that all the cells $(t_j)_j$ appearing in \eqref{eq:bound.S2.eq1}, that are intersected by the ``line segment'' $\{(y',\phi(y')+ \distH(x_f,x_{f'}))\,:\,y'\in [x'_f,x'_{f'}]\}$, satisfy $\delta_{t_j}\simeq \distH(x_f,x_{f'})$.

\begin{figure}[htbp!]%
  \includegraphics[width=.5\linewidth]{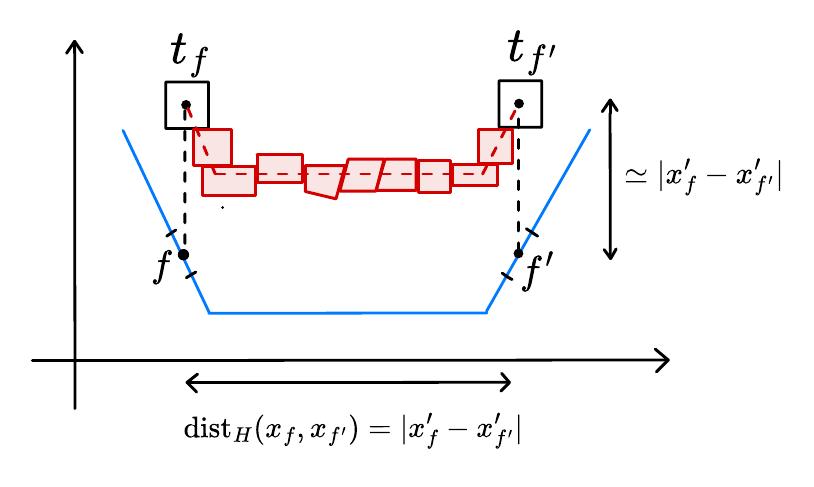}
  \caption{Local layer of cells following the boundary of a polytopal domain.}
  \label{fig:Polytopal-A2-A3}%
\end{figure}

We will now explain how local trace inequalities or lifting properties can be glued together to get Theorems \ref{thm:trace} and \ref{thm:lifting}.
From hereon, we re-introduce the index $i$ corresponding to the localisations.
If $X$ is a set of cells and $i\in I$, we denote by $X^i$ the set of cells in $X$ whose centroid belong to $U_i$; if $X$ is a set of faces, $X^i$ denotes the set of all faces in $X$ belonging to a cell whose centroid is in $U_i$. We extend this notation to the case where $X$ is a discrete space (of values on cells/faces): $X^i$ is the discrete space with values on cells (and/or faces belonging to these cells) whose centroids belong to $U_i$. 

It can be checked that there exists $h_0\gtrsim 1$ and $\zeta\gtrsim 1$ (depending only on the overlaps between the charts $(U_i)_{i\in I}$) such that, for all $h\le h_0$:
\begin{itemize}
\item For any $f\in\Fhb$, there exists $i\in I$ such that $f\subset \partial\dom\cap U_i$ (which shows in particular that \eqref{eq:def.dist.dO} makes sense for any face).
\item Pairs of nearby faces belong to the same local chart:
\begin{equation}\label{eq:choice.zeta}
\text{$\forall (f,f')\in \FFhb$ s.t.~$\dffp\le\zeta$, there exists $i\in I$ satisfying $(f,f')\in \FFhbloc$.}
\end{equation}
\item Non-empty overlaps $\partial\dom\cap (U_i\cap U_j)$ of boundary charts are sufficiently covered by faces with centroids in each chart, in such a way that
\begin{equation}\label{eq:h.small.enough}
\left|\bigcup (\Fhbloc{i}\cap \Fhbloc{j})\right|_{d-1}\ge \frac12 |\partial\dom\cap (U_i\cap U_j)|_{d-1}\gtrsim 1\quad\forall (i,j)\text{ such that }\partial\dom\cap (U_i\cap U_j)\not=\emptyset.
\end{equation}
\end{itemize}
There is no loss of generality in assuming that $h\le h_0$ since, if $h\ge h_0>0$, the trace inequality and lifting property trivially hold with constants depending only on $h_0$.

Finally, for $i\in I$, we define the following local discrete $H^1$- and $H^{1/2}$-seminorms, obtained by restricting the sums over the mesh entities whose centroid lie in $U_i$: for all $\ul{v}_h\in\Uh$,
\begin{equation}\label{eq:def.disc.H1.loc}
  \seminorm{1, h, i}{\ul{v}_h}^2 \doteq \left( \sum_{t\in\Thloc}\seminorm{1,t}{\ul{v}_t}^2\right)^{1/2}
\end{equation}
and, for all $w_h\in \Uhbb$,
\begin{equation}\label{eq:def.disc.Hhalf.loc}
  \tnorm{1/2, h, i}{w_h} \doteq \left(\sum_{f \in \Fhbloc{i}} h^{-1} \norm{L^2(f)}{w_f - \ol{w}_f}^2 + \sum_{(f,f') \in \FFhbloc} |f|_{d-1} |f'|_{d-1} \frac{|\ol{w}_f - \ol{w}_{f'}|^2}{\dffp^d}\right)^{1/2}.
\end{equation}

\subsubsection{Trace inequality}\label{sec:polytopal.trace}

With the adaptations described in Section \ref{sec:adaptation.local}, the arguments in Section \ref{sec:proof.trace} together with $\seminorm{1, h, i}{{\cdot}}\lesssim \seminorm{1, h}{{\cdot}}$ yield the following local trace inequalities: for all $\ul{v}_h\in\Uh$,
\[
\tnorm{1/2,h,i}{\tr(\ul{v}_h)}\lesssim \seminorm{1,h}{\ul{v}_h}\qquad\forall i\in I.
\]
The proof of the discrete trace inequality \eqref{eq:trace} is concluded by invoking the following proposition.

\begin{proposition}[Control of the discrete $H^{1/2}$-seminorm by local seminorms]
For all $w_h\in\Uhbb$ we have
\begin{equation}\label{eq:Hhalf.norm.from.local}
\left(\sum_{i\in I}\tnorm{1/2, h, i}{w_h}^2\right)^{1/2}\simeq \tnorm{1/2,h}{w_h}.
\end{equation}
\end{proposition}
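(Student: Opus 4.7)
The plan is to prove the equivalence \eqref{eq:Hhalf.norm.from.local} by establishing each direction separately.

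The direction $\sum_{i\in I}\tnorm{1/2,h,i}{w_h}^2\lesssim \tnorm{1/2,h}{w_h}^2$ is straightforward. Since $\#I\lesssim 1$, each face $f\in\Fhb$ lies in at most $\lesssim 1$ sets $\Fhbloc{i}$, and each pair $(f,f')\in\FFhb$ lies in at most $\lesssim 1$ local pair sets. A term-by-term comparison of the definitions \eqref{eq:def.disc.Hhalf.loc} and \eqref{eq:def.disc.Hhalf}, followed by summation over $i\in I$, yields the desired bound.

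For the reverse direction $\tnorm{1/2,h}{w_h}^2\lesssim \sum_{i\in I}\tnorm{1/2,h,i}{w_h}^2$, I would split the global seminorm into its two constituent sums. The local-variation contributions in \eqref{eq:def.disc.Hhalf} are handled immediately, since every $f\in\Fhb$ belongs to at least one $\Fhbloc{i}$ (by the choice of $h\le h_0$ discussed after \eqref{eq:choice.zeta}); this leaves only the long-range term to control. I would further split $\FFhb$ using the threshold $\zeta$ from \eqref{eq:choice.zeta}: pairs with $\dffp\le\zeta$ belong to some $\FFhbloc$ and their contribution is directly absorbed into $\sum_i\tnorm{1/2,h,i}{w_h}^2$.

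Far pairs, with $\dffp>\zeta\gtrsim 1$, are handled via a chaining argument. Given such a pair $(f,f')$ with $f\in\Fhbloc{i}$ and $f'\in\Fhbloc{j}$, I would construct a path $i=i_0,\dots,i_k=j$ of length $k\le\#I\lesssim 1$ in the connected chart-overlap graph together with intermediate faces $f_l\in\Fhbloc{i_{l-1}}\cap\Fhbloc{i_l}$; the overlap condition \eqref{eq:h.small.enough} ensures that every non-empty chart intersection contains $\gtrsim h^{-(d-1)}$ boundary faces, giving ample room to choose these faces. A repeated triangle inequality gives $|\ol{w}_f-\ol{w}_{f'}|^2\lesssim \sum_{l=0}^{k-1}|\ol{w}_{f_l}-\ol{w}_{f_{l+1}}|^2$, and since $\dffp\gtrsim 1$ while $\dist(f_l,f_{l+1})\lesssim 1$ for all consecutive pairs, each summand weighted by $|f|_{d-1}|f'|_{d-1}/\dffp^d$ is comparable to a contribution of the form $|f_l|_{d-1}|f_{l+1}|_{d-1}|\ol{w}_{f_l}-\ol{w}_{f_{l+1}}|^2/\dist(f_l,f_{l+1})^d$ belonging to the local seminorm $\tnorm{1/2,h,i_l}{w_h}^2$.

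The main obstacle I anticipate is the combinatorial accounting: naively, a single intra-chart pair $(f_l,f_{l+1})$ could be reused by many far pairs, producing a suboptimal bound. A clean resolution is to choose the intermediate faces so that the map $(f,f')\mapsto (f_l,f_{l+1})$ has uniformly bounded multiplicity---essentially a discrete transport map between the face families of neighbouring charts. Because $\#I\lesssim 1$, each intersection contains a comparable number of faces by \eqref{eq:h.small.enough}, and all relevant distances are $\simeq 1$, such an assignment can be constructed explicitly and yields the required inequality with $h$-independent constants.
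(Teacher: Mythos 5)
Your proposal has the same skeleton as the paper's proof: trivial upper bound, handling of the local-variation terms via the covering, splitting of $\FFhb$ by the threshold $\zeta$, and a triangle-inequality chain through a path of overlapping charts for the far pairs. The substantive difference lies in how the two arguments control the reuse of intra-chart pairs once the far pairs are chained: you propose a deterministic ``transport map'' $(f,f')\mapsto (f_l,f_{l+1})$ with uniformly bounded multiplicity, whereas the paper averages over \emph{all} intermediate faces $F_{ij}\in\Fhbloc{i}\cap\Fhbloc{j}$, weighted by $|F_{ij}|_{d-1}$, and then divides out by $\bigl|\bigcup(\Fhbloc{i}\cap\Fhbloc{j})\bigr|_{d-1}\gtrsim 1$ via \eqref{eq:h.small.enough}.

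The bounded-multiplicity claim is where your argument is incomplete, and it is more delicate than the cardinality count suggests. Take the two-chart case $k=1$ with a single intermediate face $f_1=\phi(f,f')\in\Fhbloc{i}\cap\Fhbloc{j}$. You need \emph{both} $\#\{f':\phi(f,f')=g\}\lesssim 1$ for every $(f,g)$ \emph{and} $\#\{f:\phi(f,f')=g\}\lesssim 1$ for every $(g,f')$; in other words, for fixed $f$ the map $f'\mapsto\phi(f,f')$ must be essentially injective, and symmetrically for fixed $f'$. A naive choice such as ``for each $f$, biject $f'$ with $f_1$ independently of $f$'' satisfies the first constraint but catastrophically fails the second: the pair $(f_1,f')$ is then reused by every $f\in\Fhbloc{i}$, giving multiplicity $\sim h^{-(d-1)}$. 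A valid $\phi$ must satisfy a Latin-square-type condition (e.g.\ $\phi(f,f')=\mathrm{index}(f)+\mathrm{index}(f')\bmod N$ after identifying faces with integers, with a padding argument since $\#\Fhbloc{i}\cap\Fhbloc{j}$ need only be comparable to, not larger than, $\#\Fhbloc{i}$ and $\#\Fhbloc{j}$). This is constructible but not ``essentially immediate,'' and for longer chains $k>1$ the consistency requirements across consecutive pairs compound. The paper's averaging device replaces the combinatorial assignment by the elementary inequality $|\ol{w}_f-\ol{w}_{f'}|^2\lesssim |\ol{w}_f-\ol{w}_{F_{ij}}|^2+|\ol{w}_{F_{ij}}-\ol{w}_{f'}|^2$, summed with weights $|F_{ij}|_{d-1}$, and the multiplicity bookkeeping collapses to the single volume estimate \eqref{eq:h.small.enough}. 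I would either flesh out the transport construction (nontrivial, and arguably not ``cleaner'' once written down) or adopt the averaging argument.
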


\begin{proof}
The upper bound in \eqref{eq:Hhalf.norm.from.local} is trivial since $\#I\lesssim 1$ and $\tnorm{1/2, h, i}{{\cdot}}\le \tnorm{1/2, h}{{\cdot}}$. 
To get the lower bound, we first note that all the local contributions $h^{-1}\norm{L^2(f)}{w_f-\ol{w}_f}^2$ to $\tnorm{1/2, h}{w_h}^2$ appear in the left-hand side of \eqref{eq:Hhalf.norm.from.local}, since by choice of $h_0$ each $f\in\Fhb$ belongs to at least one $\Fhbloc{i}$.
It remains to bound the long-range contributions to $\tnorm{1/2,h}{w_h}^2$. 

Recalling the choice of $\zeta$ satisfying \eqref{eq:choice.zeta} and setting $\alpha_{f,f'}=|f|_{d-1}|f'|_{d-1}\frac{|\ol{w}_f-\ol{w}_{f'}|^2}{|x_f-x_{f'}|^d}$ to alleviate the notations, we can bound these contributions the following way:
\begin{align*}
\sum_{(f,f')\in\FFhb}\alpha_{f,f'} &= \sum_{(f,f')\in\FFhb,\,|x_f-x_{f'}|\le \zeta}\alpha_{f,f'} + \sum_{(f,f')\in\FFhb,\,|x_f-x_{f'}|> \zeta}\alpha_{f,f'}\\
\overset{\eqref{eq:choice.zeta}}&\le \sum_{i\in I}\sum_{(f,f')\in\FFhbloc}\alpha_{f,f'} 
+
\sum_{(i,j)\in I^2} \sum_{(f,f')\in\Fhbloc{i}\times\Fhbloc{j},\,|x_f-x_{f'}|> \zeta}\alpha_{f,f'}\\
&\lesssim \sum_{i\in I}\tnorm{1/2,h,i}{w_h}^2 + 
\frac{1}{\zeta^d}\sum_{(i,j)\in I^2} \underbrace{\sum_{(f,f')\in\Fhbloc{i}\times\Fhbloc{j}}|f|_{d-1}|f'|_{d-1}|\ol{w}_f-\ol{w}_{f'}|^2}_{\doteq  T_{ij}}.
\end{align*}
The conclusion follows if we prove that any $T_{ij}$ is bounded by the square of the left-hand side of \eqref{eq:Hhalf.norm.from.local}.

Let us first assume that $\partial\dom\cap (U_i\cap U_j)\not=\emptyset$. For any $F_{ij}\in\Fhbloc{i}\cap\Fhbloc{j}$, we have
$|\ol{w}_f-\ol{w}_{f'}|^2\le 2|\ol{w}_f-\ol{w}_{F_{ij}}|^2+2|\ol{w}_{F_{ij}}-\ol{w}_{f'}|^2$ and so
\begin{align*}
T_{ij}&\lesssim\sum_{(f,f')\in\Fhbloc{i}\times\Fhbloc{j}}|f|_{d-1}|f'|_{d-1}|\ol{w}_f-\ol{w}_{F_{ij}}|^2+\sum_{(f,f')\in\Fhbloc{i}\times\Fhbloc{j}}|f|_{d-1}|f'|_{d-1}|\ol{w}_{F_{ij}}-\ol{w}_{f'}|^2\\
&\lesssim \sum_{f\in\Fhbloc{i}}|f|_{d-1}|\ol{w}_f-\ol{w}_{F_{ij}}|^2+\sum_{f'\in\Fhbloc{j}}|f'|_{d-1}|\ol{w}_{F_{ij}}-\ol{w}_{f'}|^2,
\end{align*}
where, in the second inequality, we have used $\sum_{g\in\Fhbloc{k}}|g|_{d-1}\le |\partial\dom|_{d-1}\lesssim 1$ for $k=i,j$. Multiplying by $|F_{ij}|_{d-1}$ and summing over $F_{ij}\in \Fhbloc{i}\cap\Fhbloc{j}$ we infer
\begin{align*}
\left|\bigcup (\Fhbloc{i}\cap \Fhbloc{j})\right|_{d-1}T_{ij}\lesssim{}&
\sum_{(f,F_{ij})\in\Fhbloc{i}\times (\Fhbloc{i}\cap\Fhbloc{j})}|f|_{d-1}|F_{ij}|_{d-1}|\ol{w}_f-\ol{w}_{F_{ij}}|^2\\
&+\sum_{(F_{ij},f')\in(\Fhbloc{i}\cap\Fhbloc{j})\times\Fhbloc{j}}|f'|_{d-1}|F_{ij}|_{d-1}|\ol{w}_{F_{ij}}-\ol{w}_{f'}|^2\\
\lesssim{}& \tnorm{1/2,h,i}{w_h}^2+\tnorm{1/2,h,j}{w_h}^2,
\end{align*}
the conclusion following from $|x_g-x_{F_{ij}}|^d\le \mathrm{diam}(\partial\dom)^d$ for $g=f,f'$. Recalling \eqref{eq:h.small.enough} gives the desired bound on $T_{ij}$.

If $\partial\dom\cap (U_i\cap U_j)=\emptyset$, we can find a chain $U_i=U_{i(1)},U_{i(2)},\cdots,U_{i(n)}=U_j$ with $n\le \#I\lesssim 1$ such that
$\partial\dom\cap (U_{i(k)}\cap U_{i(k+1)})\not=\emptyset$ for all $k\in\{1,\ldots,n-1\}$. For all $k$ we take an arbitrary $F_{i(k)i(k+1)}\in \Fhbloc{i(k)}\cap \Fhbloc{i(k+1)}$ and write, for $(f,f')\in\Fhbloc{i}\times\Fhbloc{j}$,
\[
|w_f-w_{f'}|^2\overset{\eqref{CSinRn},\, n\lesssim 1}\lesssim |w_f-w_{F_{i(1)i(2)}}|^2+|w_{F_{i(1)i(2)}}-w_{F_{i(2)i(3)}}|^2+\cdots+|w_{F_{i(n-1)i(n)}}-w_{f'}|^2.
\]
We then reason in a similar way as in the intersecting case above: multiply this relation by the product $|f|_{d-1}|f'|_{d-1}|F_{i(1)i(2)}|_{d-1}\cdots |F_{i(n-1)i(n)}|_{d-1}$, sum over all the involved faces in their respective sets, and invoke \eqref{eq:h.small.enough} to see that $T_{ij}\lesssim \tnorm{1/2,h,i(1)}{w_h}^2+\tnorm{1/2,h,i(2)}{w_h}^2+\cdots+\tnorm{1/2,h,i(n)}{w_h}^2$.
\end{proof}

\subsubsection{Lifting property}\label{sec:polytopal.lifting} 

Given $w_h\in\Uhbb$ and following the same arguments as in Section \ref{sec:proof.lifting} -- with the adaptations mentioned in Section \ref{sec:adaptation.local} -- we can find for all $i\in I$ a local lifting $\ul{\tilde{v}}_{h,i} \in \Uhloc$ of the restriction $w_{h,i} \in \Uhbbloc$ of $w_h$, such that
\begin{align}\label{eq:lifting.local}
  \seminorm{1,h,i}{\ul{\tilde{v}}_{h,i}} \lesssim \tnorm{1/2, h, i}{w_{h,i}} \quad\mbox{and} \quad \tilde{v}_{f,i} = w_f \quad \forall  f \in \Fhbloc{i}.
\end{align}
Our aim here is to leverage these local liftings to create a global lifting $\ul{v}_h\in\Uh$ of $w_h \in \Uhbb$.

First, without loss of generality we can assume that
\begin{equation}\label{eq:wh.zero.integral}
  \int_{\partial \dom} w_h = 0.
\end{equation}
Indeed, this relation always holds upon adding a constant $C$ to $w_h$, and the constant function $C\in\Uhbb$ can be trivially lifted as the vector $((C)_{t\in\Th},(C)_{f\in\Fh})\in\Uh$.

Consider a partition of unity $(\eta_i)_{i \in I}$ on $\partial \dom$ associated with $(U_i)_{i \in I}$, such that
\begin{align*}
  \eta_i \in C_c^{\infty}(U_i), \quad 0 \leq \eta_i \leq 1, \quad \sum_{i \in I} \eta_i = 1 \mbox{ on $\partial \dom$}.
\end{align*}
Upon reducing $h_0$, we can also ensure that each $\eta_i$ vanishes on an $h_0$-neighbourhood of $\partial U_i$.
Recalling that $\lproj{X}{0}$ denotes the $L^2(X)$-orthogonal projection on $\mathbb{P}_0(X)$, and extending each $\ul{\tilde{v}}_{h,i}$ by zero on cells/faces that do not have their centroid in $U_i$, we then set $\ul{v}_{h,i} \in \Uh$ such that
\begin{equation}\label{eq:def.vhi}
  \begin{aligned}
    v_{t,i} ={}& (\lproj{t}{0} \eta_i) \tilde{v}_{t,i} \quad \forall t \in \Th, \\
    v_{f,i} ={}& (\lproj{f}{0} \eta_i) \tilde{v}_{f,i} \quad \forall f \in \Fh.
  \end{aligned}
\end{equation}

Define $\ul{v}_h = \sum_{i\in I} \ul{v}_{h,i}$. Using Proposition \ref{prop:lifting.reconstructed} below, we get
\begin{equation}\label{eq:lifting.ineq}
  \seminorm{1,h}{\ul{v}_h} \lesssim \tnorm{1/2, h}{w_h}.
\end{equation}
Furthermore, for all $f \in \Fhb$,
\begin{equation}\label{eq:lifting.trace}
  (\tr (\ul{v}_h))|_f = \sum_{i\in I} v_{f,i} = \sum_{i \in I} (\lproj{f}{0} \eta_i) \tilde{v}_{f,i} = \sum_{i \in I} (\lproj{f}{0} \eta_i) w_f = w_f,
\end{equation}
where the last two equalities are obtained by noticing that, since $\lproj{f}{0} \eta_i=0$ whenever $f\not\in\Fhbloc{i}$ (since $\eta_i$ vanishes on an $h$-neighbourhood of $\partial U_i$), 
\begin{align*}
  (\lproj{f}{0} \eta_i) \tilde{v}_{f,i} \overset{\eqref{eq:lifting.local}}= (\lproj{f}{0} \eta_i) w_f \;\; \mbox{and} \;\; \sum_{i \in I} (\lproj{f}{0} \eta_i) = \lproj{f}{0} (\sum_{i \in I} \eta_i) = \lproj{f}{0} 1= 1.
\end{align*}
The combination of \eqref{eq:lifting.ineq} and \eqref{eq:lifting.trace} concludes the proof of Theorem \ref{thm:lifting} in the case where $\dom$ is a polytope.

\medskip

The following lemma was used in the argument above.

\begin{proposition}\label{prop:lifting.reconstructed}
  Assuming that $w_h\in\Uhbb$ satisfies \eqref{eq:wh.zero.integral} and defining $\ul{v}_{h,i}$ by \eqref{eq:def.vhi} with $\ul{\tilde{v}}_{h,i}$ satisfying \eqref{eq:lifting.local}, we have
  $\seminorm{1,h}{\ul{v}_{h,i}} \lesssim \tnorm{1/2,h}{w_h}$.
\end{proposition}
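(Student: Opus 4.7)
The plan is to expand the discrete $H^1$-seminorm of $\ul{v}_{h,i}$, exploit that cell (and interior-face) components of $\ul{\tilde v}_{h,i}$ are constants, and then control the resulting terms using the local lifting estimate \eqref{eq:lifting.local}, a weighted-average structure of $\ul{\tilde v}_{h,i}$, and a discrete Poincar\'e--Wirtinger argument that uses the hypothesis $\int_{\partial\dom}w_h=0$.

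First, since $\tilde v_{t,i}$ is constant on each cell and $\lproj{t}{0}\eta_i$ is a scalar, $\nabla v_{t,i}=0$, so the seminorm reduces to the jump contributions. For $f\in\mathcal F_t$, I would decompose
\[
v_{f,i}-v_{t,i}=(\lproj{f}{0}\eta_i)(\tilde v_{f,i}-\tilde v_{t,i})+(\lproj{f}{0}\eta_i-\lproj{t}{0}\eta_i)\tilde v_{t,i}.
\]
The first term is bounded by $\|\tilde v_{f,i}-\tilde v_{t,i}\|_{L^2(f)}$ since $\|\eta_i\|_\infty\le 1$. For the second, Lipschitz continuity of $\eta_i$ gives $|\lproj{f}{0}\eta_i-\lproj{t}{0}\eta_i|\lesssim h$, so its $L^2(f)$-norm squared is $\lesssim h^2|\tilde v_{t,i}|^2|f|_{d-1}\simeq h^{d+1}|\tilde v_{t,i}|^2$. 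Summing over $f\in\mathcal F_t$ and dividing by $h_t\simeq h$ yields $h^d|\tilde v_{t,i}|^2\simeq\|\tilde v_{t,i}\|_{L^2(t)}^2$. Altogether,
\[
\seminorm{1,h}{\ul v_{h,i}}^2\lesssim \seminorm{1,h,i}{\ul{\tilde v}_{h,i}}^2+\sum_{t\in\Thloc}\|\tilde v_{t,i}\|_{L^2(t)}^2.
\]
The first term is handled by \eqref{eq:lifting.local} together with \eqref{eq:Hhalf.norm.from.local}, producing $\lesssim\tnorm{1/2,h}{w_h}^2$.

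The remaining work is to bound $\sum_t\|\tilde v_{t,i}\|_{L^2(t)}^2$ by $\tnorm{1/2,h}{w_h}^2$. I would exploit the explicit formula $\tilde v_{t,i}=\sum_{f\subset A_t}\ol w_f\rho_t(f)$ and the fact that $\rho_t$ is a convex combination (using \eqref{eq:sum.rho}). Jensen's inequality then gives $|\tilde v_{t,i}|^2\le\sum_f|\ol w_f|^2\rho_t(f)$, and after multiplying by $|t|_d$ and swapping sums,
\[
\sum_{t\in\Thloc}\|\tilde v_{t,i}\|_{L^2(t)}^2\le \sum_{f\in\Fhbloc{i}}|f|_{d-1}|\ol w_f|^2\Bigg(\sum_{t\,:\,f\subset A_t}\frac{|t|_d}{|A_t|_{d-1}}\Bigg).
\]
I would show that the bracketed sum is $\lesssim 1$ by splitting the cells $t$ over the layers $\La_m$: in each layer the cells with $f\subset A_t$ project onto a disc of radius $\simeq mh$ around $f$, hence number $\lesssim m^{d-1}$ by quasi-uniformity. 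Using $|t|_d\simeq h^d$ and Lemma~\ref{lem:At}-\ref{lem:At.1} ($|A_t|_{d-1}\simeq(mh)^{d-1}$) yields a contribution $\simeq h$ per layer, and summing over the $\simeq 1/h$ layers gives $\lesssim 1$.

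It remains to show $\sum_f|f|_{d-1}|\ol w_f|^2\lesssim\tnorm{1/2,h}{w_h}^2$, a discrete Poincar\'e--Wirtinger inequality on $\partial\dom$. The zero-mean assumption \eqref{eq:wh.zero.integral} implies $\sum_{f'}|f'|_{d-1}\ol w_{f'}=0$, so for every $f$, $\ol w_f=\frac{1}{|\partial\dom|_{d-1}}\sum_{f'}|f'|_{d-1}(\ol w_f-\ol w_{f'})$. Cauchy--Schwarz gives $|\ol w_f|^2\lesssim\sum_{f'}|f'|_{d-1}|\ol w_f-\ol w_{f'}|^2$; multiplying by $|f|_{d-1}$, summing over $f$, and using $\dffp\lesssim\mathrm{diam}(\partial\dom)\simeq 1$ (so $\dffp^d\lesssim 1$) produces exactly the long-range term of $\tnorm{1/2,h}{w_h}^2$. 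Chaining the estimates completes the proof.

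The main obstacle I expect is the counting step $\sum_{t\,:\,f\subset A_t}|t|_d/|A_t|_{d-1}\lesssim 1$, which requires carefully reusing the layer/cardinality arguments of Section~\ref{sec:proof.technical.trace} (and their polytopal adaptations from Section~\ref{sec:adaptation.local}) to control how many cells at each height can ``see'' the given boundary face $f$; the remaining ingredients (the product-rule decomposition, Lipschitz regularity of $\eta_i$, and the boundary Poincar\'e--Wirtinger step) are comparatively routine.
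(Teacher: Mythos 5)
Your proposal is correct, and it follows the paper's proof through the product-rule decomposition, the Lipschitz estimate on $\eta_i$, and the reduction $\seminorm{1,h}{\ul v_{h,i}}^2\lesssim \seminorm{1,h,i}{\ul{\tilde v}_{h,i}}^2+\sum_{t\in\Thloc}\|\tilde v_{t,i}\|_{L^2(t)}^2$, as well as the final Poincar\'e--Wirtinger step (which is exactly Lemma \ref{lem:PW} with $P_0=\partial\dom$). The one place where you take a genuinely different route is the control of the $L^2$-norm term $\sum_t\|\tilde v_{t,i}\|_{L^2(t)}^2$. The paper does not touch the internal structure of $\tilde v_{t,i}$ at that point; it invokes Lemma \ref{lem:gdm}, a discrete Sobolev-type estimate adapted from \cite[Lemma B.22]{Droniou.Eymard:2018:GDM}, to bound the $L^2$-norm in $\dom\cap U_i$ by the discrete $H^1$-seminorm plus the $L^2$-norm of the trace, and then applies Lemma \ref{lem:PW} to the trace term. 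You instead exploit the explicit weighted-average formula $\tilde v_{t,i}=\sum_f\ol w_f\rho_t(f)$, apply Jensen, swap sums, and reduce everything to the counting fact $\sum_{t:f\subset A_t}|t|_d/|A_t|_{d-1}\lesssim 1$, which you verify via the layer argument and Lemma \ref{lem:At}-\ref{lem:At.1}. Both routes are valid; yours is more elementary and self-contained (no appeal to the GDM reference), at the cost of being tied to the specific averaging construction of the local lifting, whereas the paper's argument only needs the piecewise-constant cell values and would apply to any such lifting. Two minor notes: invoking the full equivalence \eqref{eq:Hhalf.norm.from.local} is overkill -- the one-sided inequality $\tnorm{1/2,h,i}{\cdot}\le\tnorm{1/2,h}{\cdot}$ suffices and is trivial -- and the layer-$0$ contribution in your counting needs $\delta_t\simeq h$ rather than $\delta_t\simeq 0$, which is supplied by Lemma \ref{lem:At}-\ref{lem:At.0} ($h\lesssim\delta_t$).
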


The localisation process used in the proof of Proposition \ref{prop:lifting.reconstructed} generates $L^2$-norms of the discrete functions that have to be estimated in terms of their seminorms. To achieve this, we use the following two lemmas. The first one locally bounds the $L^2$-norm inside the domain by the discrete $H^1$-seminorm and the $L^2$-norm of the trace. The second one bounds the remaining $L^2$-norm of the trace by the discrete $H^{1/2}$-seminorm on the boundary.

\begin{lemma}\label{lem:gdm}
  For all $\ul{\widehat{v}}_{h,i} \in \bigtimes_{t\in\Th^i}\mathbb{P}_0(t)\times \bigtimes_{f\in\Fh^i}\mathbb{P}_0(f)$ (the lowest order hybrid space in $\dom\cap U_i$), it holds
  \begin{equation}\label{eq:L2.O.Ui}
  \norm{L^2(\dom \cap U_i)}{\widehat{v}_{h,i}} \lesssim \seminorm{\dom \cap U_i,2}{\ul{\widehat{v}}_{h,i}} + \norm{L^2(\partial \dom \cap U_i)}{\tr (\ul{\widehat{v}}_{h,i})},
  \end{equation}
  where $\widehat{v}_{h,i}$ is the piecewise constant function defined by $(\widehat{v}_{h,i})_{|t}=\widehat{v}_t$ for all $t\in\Th^i$, and 
  $$
    \seminorm{\dom \cap U_i,2}{\ul{\widehat{v}}_{h,i}}^2 \doteq \sum_{t \in \Th^i} \sum_{f \in \mathcal F_t} \frac{|f|_{d-1}}{d_{t,f}} |\widehat{v}_{f,i} - \widehat{v}_{t,i}|^2,
  $$
  with $d_{t,f}$ denoting the orthogonal distance between $f$ and the centre of the largest ball contained in $t$.
\end{lemma}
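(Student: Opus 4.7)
The result is a discrete Poincar\'e-type inequality with boundary trace term, for piecewise constant hybrid functions on the Lipschitz subgraph $\dom\cap U_i$. The plan is to mimic at the discrete level the continuous inequality
\[
\norm{L^2(\dom\cap U_i)}{v}^2 \lesssim \norm{L^2(\dom\cap U_i)}{\nabla v}^2 + \norm{L^2(\partial\dom\cap U_i)}{v}^2 \qquad \forall v\in H^1(\dom\cap U_i),
\]
which holds because, by construction of the chart, $\partial\dom\cap U_i$ occupies a non-negligible portion of $\partial(\dom\cap U_i)$. The discrete adaptation follows the Discrete Functional Analysis strategy pioneered in \cite{eymard.gallouet.ea:2010:discretization,Droniou.Eymard:2018:GDM} for finite-volume schemes, modified to accommodate the hybrid structure where face DOFs $\widehat{v}_{f,i}$ serve as intermediaries between neighbouring cell DOFs.

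First, I would fix a bounded open cone $\mathcal{C}$ of unit vectors pointing ``downward'' in the chart, chosen so that for a.e.\ $x\in\dom\cap U_i$ and every $\xi\in\mathcal{C}$ the ray $\{x+s\xi:\ s\ge 0\}$ stays in $\dom\cap U_i$ until it exits through $\partial\dom\cap U_i$. The Lipschitz structure of the chart guarantees such a cone exists with aperture bounded from below independently of $h$. For each such ray, listing the cells $t_0,t_1,\ldots,t_N$ traversed, the interior faces $f_1,\ldots,f_N$ between consecutive cells, and the exit face $f_{\mathrm{end}}\in\Fhbloc{i}$, I write the telescopic identity
\[
\widehat{v}_{t_0} - \widehat{v}_{f_{\mathrm{end}}} = \sum_{j=1}^{N}\bigl((\widehat{v}_{t_{j-1}}-\widehat{v}_{f_j}) + (\widehat{v}_{f_j}-\widehat{v}_{t_j})\bigr) + (\widehat{v}_{t_N}-\widehat{v}_{f_{\mathrm{end}}}),
\]
so that every increment is of the form $\widehat{v}_t-\widehat{v}_f$ with $f\in\mathcal{F}_t$, which is exactly what is measured by $\seminorm{\dom\cap U_i,2}{\cdot}$.

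Next, I would square the identity, bound $|\widehat{v}_{t_0}|^2 \lesssim |\widehat{v}_{f_{\mathrm{end}}}|^2 + |\widehat{v}_{t_0}-\widehat{v}_{f_{\mathrm{end}}}|^2$, and apply a weighted Cauchy--Schwarz with weights $d_{t,f}$ to the second term: the sum of $d_{t,f}$ along the face-cell crossings of a single ray is bounded by the ray length, itself $\lesssim \mathrm{diam}(\dom\cap U_i)\lesssim 1$. Integrating over $(x,\xi)\in(\dom\cap U_i)\times\mathcal{C}$ and swapping the order of integration via Fubini, the boundary contribution yields $\norm{L^2(\partial\dom\cap U_i)}{\tr(\ul{\widehat{v}}_{h,i})}^2$, since the $(d-1+1)$-measure of rays exiting through a fixed $f\in\Fhbloc{i}$ is $\simeq|f|_{d-1}$ (up to aperture factors). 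Similarly, for each interior $f\in\mathcal{F}_t$, the measure of $(x,\xi)$ whose ray crosses $f$ is also $\simeq|f|_{d-1}$ (times a bounded aperture factor), so the contribution $|\widehat{v}_t-\widehat{v}_f|^2/d_{t,f}$ gets multiplied by $|f|_{d-1}$, reproducing exactly the weight in $\seminorm{\dom\cap U_i,2}{\ul{\widehat{v}}_{h,i}}^2$.

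The main obstacle is the geometric bookkeeping: choosing $\mathcal{C}$ so that rays reliably reach $\partial\dom\cap U_i$ (and not the lateral portion of $\partial(\dom\cap U_i)$), and verifying that the measure of rays crossing a face $f$ is indeed $\simeq|f|_{d-1}$ with constants depending only on the Lipschitz constants of the chart and the mesh regularity. Once this geometric setup is fixed, the quasi-uniformity of the mesh ($d_{t,f}\simeq h$) makes the Cauchy--Schwarz/Fubini algebra routine, and the argument is essentially the hybrid analogue of the classical finite-volume Poincar\'e inequality with boundary trace.
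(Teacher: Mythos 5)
Your strategy -- telescoping along rays, weighted Cauchy--Schwarz, then Fubini to convert the ray-bound into the local $H^1$-seminorm plus a boundary trace term -- is exactly the Discrete Functional Analysis mechanism the paper relies on: the proof cites \cite[Lemma B.22]{Droniou.Eymard:2018:GDM}, which bounds cell values along lines in a fixed direction $\mathbf{e}$, and observes that taking $\mathbf{e}=(0,\ldots,0,-1)$ in the chart coordinates makes the argument automatically local (cells crossed lie in $\Th^i$, the exit point lies in $\partial\dom\cap U_i$). So you have the right toolbox. However, there is a genuine gap in the cone construction that you flag as an obstacle but underestimate. For $\dom\cap U_i=\{(x',x_d)\in V_i\times J_i : x_d>\phi_i(x')\}$, the lateral part of $\partial(\dom\cap U_i)$ (coming from $\partial U_i$, inside $\dom$) is a vertical cylinder over $\partial V_i$. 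For any open cone $\mathcal{C}$ with positive aperture, there exist points $x\in\dom\cap U_i$ arbitrarily close to $\partial V_i\times J_i$ and directions $\xi\in\mathcal{C}$ with a lateral component such that the ray $x+\mathbb R^+\xi$ exits $U_i$ laterally before ever meeting the graph of $\phi_i$; the claim ``such a cone exists with aperture bounded from below independently of $h$, for a.e.\ $x$'' is therefore false. Since the lemma only grants a trace term on $\partial\dom\cap U_i$ (not on the lateral boundary), those rays carry no usable exit data. The hypograph structure forces the degenerate choice $\mathcal{C}=\{(0,\ldots,0,-1)\}$: only the exactly vertical direction guarantees that \emph{every} ray from $\dom\cap U_i$ exits through $\partial\dom\cap U_i$. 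This is precisely why the paper takes a single unit vector $\mathbf e$, not a cone; the Fubini is then over the projection of rays onto $\mathbf e^\perp$ rather than over $(x,\xi)$.

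Two further points you would need to address. First, even with the vertical direction, a ray emanating from $t\in\Th^i$ can cross cells $s$ that overlap $U_i$ but whose centroid lies outside $U_i$, hence $s\notin\Th^i$ and the seminorm $\seminorm{\dom\cap U_i,2}{\cdot}$ provides no control on $\widehat v_{s,i}$. The paper patches this by defining $\widehat v_{s,i}$ as an average of the available face values of $s$, fixing the missing face values accordingly, and showing the resulting extra seminorm contributions are controlled by the terms already present (because the faces of $s$ in $\Fh^i$ are shared with cells of $\Th^i$). Your proposal does not consider this boundary-of-chart effect. Second, the statement ``the measure of rays crossing $f$ is $\simeq|f|_{d-1}$'' is not accurate with a single direction: it is $|f|_{d-1}\,|\cos\theta_f|$, where $\theta_f$ is the angle between $\mathbf n_f$ and $\mathbf e$, and the factor can degenerate. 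For the direction of the inequality being proved (an upper bound on the $L^2$-norm), this is harmless since it only shrinks the face contribution, but the ``$\simeq$'' should be a ``$\lesssim$''.
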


\begin{proof}
  \cite[Lemma B.22]{Droniou.Eymard:2018:GDM} establishes the global version of \eqref{eq:L2.O.Ui}, that is, when $U_i$ is not present. However, the proof in this reference provides a more precise result. Specifically, it consists in selecting a unit vector $\mathbf{e}$ and in bounding the value of a hybrid function on a cell $t$ using (i) its local discrete $H^1$-seminorms on cells crossed by $t+\mathbb R^+ \mathbf{e}$, and (ii) its boundary values on $(t+\mathbb R^+ \mathbf{e})\cap \partial \dom$. Given the structure of our localisation, if we take $\mathbf{e}=(0,\ldots,0,-1)$ in the coordinates system corresponding to $U_i$ and apply the argument in the proof of \cite[Lemma B.22]{Droniou.Eymard:2018:GDM}, we naturally get \eqref{eq:L2.O.Ui} by noticing that, for $t\subset\dom\cap U_i$, (i) the cells crossed by $t+\mathbb R^+ \mathbf{e}$ are in $\Th^i$, and (ii) $(t+\mathbb R^+ \mathbf{e})\cap \partial \dom\subset U_i\cap\partial\dom$.
  
  To be completely rigorous, $t+\mathbb R^+ \mathbf{e}$ could cross some cells $s$ that are not in $\Th^i$ -- when $s$ overlaps with $U_i$ but $x_s\not\in U_i$. This situation can be fixed by defining $\widehat{v}_{s,i}$ as the average of the face values $\widehat{v}_{f,i}$ for $f\in\Fh^i\cap\mathcal F_s$, by fixing $\widehat{v}_{g,i}=\widehat{v}_{s,i}$ for all $g\in\mathcal F_s\backslash\Fh^i$, and by noticing that the added term $\sum_{f\in\mathcal F_s}\frac{|f|_{d-1}}{d_{s,f}}|\widehat{v}_{f,i}-\widehat{v}_{s,i}|^2$ in $\seminorm{\Omega\cap U_i,2}{\ul{\widehat{v}}_{h,i}}^2$ can be bounded by the other terms already present in this seminorm, since all $f\in\Fh^i\cap\mathcal F_s$ are shared with cells in $\Th^i$.
\end{proof}

\begin{lemma}\label{lem:PW}
  Let $P_0\subset \partial \dom$ be the union of certain faces in $\Fhb$. For all $w_h \in \Uhbb$ such that $\int_{P_0} w_h  = 0$, we have 
  $$
  \norm{L^2(\partial \dom)}{w_h} \lesssim \left(1 + \frac{\mathrm{diam}(\partial \dom)^d}{|P_0|_{d-1}}\right)^{1/2}\tnorm{1/2, h}{w_h}.
  $$
\end{lemma}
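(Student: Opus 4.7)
The plan is to establish the Poincaré--Wirtinger type bound by splitting each face value into its average part and its oscillation part, and controlling each contribution separately via the two pieces of $\tnorm{1/2, h}{{\cdot}}$.

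First, for every $f \in \Fhb$ I would write $w_f = (w_f - \ol{w}_f) + \ol{w}_f$ and use orthogonality (in $L^2(f)$) to get
\begin{equation*}
  \norm{L^2(\partial \dom)}{w_h}^2 = \sum_{f \in \Fhb} \norm{L^2(f)}{w_f-\ol{w}_f}^2 + \sum_{f\in\Fhb}|f|_{d-1}|\ol{w}_f|^2 \doteq T_{\mathrm{osc}} + T_{\mathrm{avg}}.
\end{equation*}
The oscillation part $T_{\mathrm{osc}}$ is immediate: $\norm{L^2(f)}{w_f-\ol{w}_f}^2 = h_f\cdot h_f^{-1}\norm{L^2(f)}{w_f-\ol{w}_f}^2$, and since $h_f \leq \mathrm{diam}(\partial\dom)\lesssim 1$ (by Remark \ref{scaling.dom}) we obtain $T_{\mathrm{osc}} \lesssim \tnorm{1/2,h}{w_h}^2$ directly from the first (local) contribution to the discrete $H^{1/2}$-seminorm.

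The main work is on $T_{\mathrm{avg}}$, and this is where the condition $\int_{P_0}w_h = 0$ comes in. Setting $M \doteq |P_0|_{d-1} = \sum_{f'\subset P_0}|f'|_{d-1}$, the hypothesis reads $\sum_{f'\subset P_0}|f'|_{d-1}\ol{w}_{f'}=0$, so that for every $f\in\Fhb$,
\begin{equation*}
  \ol{w}_f = \frac{1}{M}\sum_{f'\subset P_0}|f'|_{d-1}(\ol{w}_f - \ol{w}_{f'}).
\end{equation*}
A Cauchy--Schwarz inequality then yields $|\ol{w}_f|^2 \leq \frac{1}{M}\sum_{f'\subset P_0}|f'|_{d-1}|\ol{w}_f-\ol{w}_{f'}|^2$. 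Multiplying by $|f|_{d-1}$, summing over $f\in\Fhb$, noting that the diagonal terms $f=f'$ vanish, and using $|x_f-x_{f'}| \leq \mathrm{diam}(\partial\dom)$ to insert the weight $|x_f-x_{f'}|^{-d}$, I get
\begin{equation*}
  T_{\mathrm{avg}} \leq \frac{\mathrm{diam}(\partial\dom)^d}{M}\sum_{(f,f')\in \FFhb}|f|_{d-1}|f'|_{d-1}\frac{|\ol{w}_f-\ol{w}_{f'}|^2}{\dffp^d} \leq \frac{\mathrm{diam}(\partial\dom)^d}{|P_0|_{d-1}}\tnorm{1/2,h}{w_h}^2.
\end{equation*}
Combining the bounds on $T_{\mathrm{osc}}$ and $T_{\mathrm{avg}}$ and taking a square root gives the result.

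I do not foresee a genuine obstacle: the argument is the discrete analogue of the standard Poincaré--Wirtinger proof. The only mild care needed is in the two scaling checks -- bounding $h_f \lesssim 1$ for the oscillation piece (justified by the rescaling remark), and ensuring the averages-sum is indexed over pairs in $\FFhb$ so that the long-range term of $\tnorm{1/2,h}{\cdot}$ can be used as an upper bound.
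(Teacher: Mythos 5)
Your proof is correct and follows essentially the same route as the paper's: split $w_f$ into oscillation plus average, absorb the oscillation into the local part of $\tnorm{1/2,h}{{\cdot}}$ using $h_f\lesssim 1$, then exploit $\int_{P_0}w_h=0$, Jensen/Cauchy--Schwarz, and $\dffp\lesssim\mathrm{diam}(\partial\dom)$ to control the averages via the long-range part. The only (cosmetic) difference is that you use $L^2(f)$-orthogonality to get an exact Pythagorean split where the paper uses a triangle inequality with a factor of $2$.
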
 

\begin{proof} We have 
\begin{align}
  \norm{L^2(\partial \dom)}{w_h}^2 &= \sum_{f \in \Fhb} \norm{L^2(f)}{w_f}^2 \nonumber\\
  &\leq 2 \sum_{f \in \Fhb} \norm{L^2(f)}{w_f - \ol{w}_f}^2 + 2 \sum_{f \in \Fhb} \norm{L^2(f)}{\ol{w}_f}^2 \nonumber\\
  \overset{h_f\lesssim 1}&\lesssim \sum_{f \in \Fhb} h_f^{-1} \norm{L^2(f)}{w_f - \ol{w}_f}^2 +  \sum_{f \in \Fhb} |f|_{d-1} |\ol{w}_f|^2 \nonumber\\
  &\lesssim \tnorm{1/2, h}{w_h}^2 + \sum_{f \in \Fhb} |f|_{d-1} |\ol{w}_f|^2.
\label{eq:PW.eq1}
\end{align}
Let us now work on the second term in \eqref{eq:PW.eq1}. Notice that
\begin{align*}
  0 = \int_{P_0} w_h  = \sum_{f' \in \Fhb(P_0)} \int_{f'} w_{f'} = \sum_{f' \in \Fhb(P_0)} |f'|_{d-1} \ol{w}_{f'},
\end{align*}
where $\Fhb(P_0)$ is the set of faces that form $P_0$.
Using this relation, we can write
\begin{align*}
  \sum_{f \in \Fhb} |f|_{d-1} |\ol{w}_f|^2 &= \sum_{f \in \Fhb} |f|_{d-1} \left|\ol{w}_f - \frac{1}{|P_0|_{d-1}} \sum_{f' \in \Fhb(P_0)}|f'|_{d-1} \ol{w}_{f'} \right|^2 \\
  &= \sum_{f \in \Fhb} |f|_{d-1} \left|\frac{1}{|P_0|_{d-1}} \sum_{f' \in \Fhb(P_0)} |f'|_{d-1} (\ol{w}_f - \ol{w}_{f'})\right|^2\\
  &\le \sum_{f \in \Fhb} |f|_{d-1} \frac{1}{|P_0|_{d-1}} \sum_{f' \in \Fhb(P_0)} |f'|_{d-1} |\ol{w}_f - \ol{w}_{f'}|^2,
\end{align*} 
where the last inequality comes from the convexity of $s\to s^2$. For all 
$(f,f') \in \FFhb$ we have $\dffp \lesssim \mathrm{diam}(\partial \dom)$. Hence,
\begin{align*}
  \sum_{f \in \Fhb} |f|_{d-1} |\ol{w}_f|^2 \lesssim \frac{\mathrm{diam}(\partial \dom)^d}{|P_0|_{d-1}} \sum_{(f,f') \in \FFhb} |f|_{d-1} |f'|_{d-1} \frac{|\ol{w}_f - \ol{w}_{f'}|^2}{\dffp^d} \leq \frac{\mathrm{diam}(\partial \dom)^d}{|P_0|_{d-1}} \tnorm{1/2, h}{w_h}^2.
\end{align*} 
Plugging the above inequality into \eqref{eq:PW.eq1} concludes the proof.
\end{proof}

\medskip
We are now ready to prove Proposition \ref{prop:lifting.reconstructed}.

\begin{proof}[Proof of Proposition \ref{prop:lifting.reconstructed}]
  For $t \in \Th$ and $f \in \mathcal F_t$, we have
$$
  v_{t,i} - v_{f,i} \overset{\eqref{eq:def.vhi}}=  (\lproj{t}{0} \eta_i) \tilde{v}_{t,i} -  (\lproj{f}{0} \eta_i) \tilde{v}_{f,i} 
  = (\lproj{t}{0} \eta_i - \lproj{f}{0} \eta_i) \tilde{v}_{t,i} + (\lproj{f}{0} \eta_i)(\tilde{v}_{t,i} - \tilde{v}_{f,i}).
$$ 
  Since $\eta_i$ is smooth, $|\lproj{t}{0} \eta_i - \lproj{f}{0} \eta_i|\lesssim h_t$ and $|\eta_i|\lesssim 1$, so
  \begin{align*}
    h_t^{-1} \norm{L^2(f)}{v_{t,i} - v_{f,i}}^2 \lesssim h_t \norm{L^2(f)}{\tilde{v}_{t,i}}^2 + h_t^{-1} \norm{L^2(f)}{\tilde{v}_{t,i} - \tilde{v}_{f,i}}^2. 
  \end{align*} 
  Noticing that $v_{t,i}$ is constant (recall \eqref{eq:reconstructed.vt} and \eqref{eq:def.vhi}), the definition \eqref{eq:def.disc.H1} of $\seminorm{1,t}{{\cdot}}$ and the discrete trace inequality \cite[Lemma 1.32]{di-pietro.droniou:2020:hybrid} then give
  \begin{equation}\label{eq:lift.bound.local}
    \seminorm{1,t}{\ul{v}_{t,i}}^2 \lesssim \norm{L^2(t)}{\tilde{v}_{t,i}}^2 + \seminorm{1,t}{\ul{\tilde{v}}_{h,i}}^2. 
  \end{equation} 
  By \eqref{eq:def.vhi} and choice of the support of $\eta_i$, we have $\ul{v}_{t,i}=\ul{0}$ whenever $t\not\subset U_i$, from which we infer
  \begin{equation}\label{eq:lifting.reconstructed.eq0}
    \seminorm{1,h}{\ul{v}_{h,i}}^2 =\sum_{t\subset U_i}\seminorm{1,t}{\ul{v}_{t,i}}^2\overset{\eqref{eq:lift.bound.local}}\lesssim \norm{L^2(\dom \cap U_i)}{\tilde{v}_{h,i}}^2 + \seminorm{1, h,i}{\ul{\tilde{v}}_{h,i}}^2 \lesssim  \norm{L^2(\dom \cap U_i)}{\tilde{v}_{h,i}}^2 + \tnorm{1/2, h}{w_h}^2,
  \end{equation} 
  where we have used \eqref{eq:lifting.local} and $\tnorm{1/2, h,i}{w_{h,i}}\le \tnorm{1/2, h}{w_h}$ in the last inequality.
  
  We deal with the first term in the right-hand side of \eqref{eq:lifting.reconstructed.eq0} using Lemma \ref{lem:gdm}. Let $\ul{\widehat{v}}_{h,i}$ be the lowest-order projection of $\ul{\tilde{v}}_{h,i}$, that is: $\widehat{v}_{t,i}=\tilde{v}_{t,i}\in\Poly{0}(t)$ for all $t\in\Th$ (recall that the lifting $\ul{\tilde{v}}_{h,i}$ has constant cell values) and $\widehat{v}_{f,i}=\lproj{f}{0}\tilde{v}_{f,i}\in\Poly{0}(f)$ for all $f\in\Fh$. Then, Lemma \ref{lem:gdm} gives
   \begin{equation}\label{eq:lifting.reconstructed.eq1}
    \norm{L^2(\dom \cap U_i)}{\tilde{v}_{h,i}}=\norm{L^2(\dom \cap U_i)}{\widehat{v}_{h,i}} \lesssim \seminorm{\dom \cap U_i,2}{\ul{\widehat{v}}_{h,i}} + \norm{L^2(\partial \dom \cap U_i)}{\tr (\ul{\widehat{v}}_{h,i})}. 
  \end{equation}
  Using $d_{t,f} \simeq h_t$ (by mesh regularity) and applying \eqref{lem:local-tf-approximation.b} to $\ul{\tilde{v}}_{t,i}$, we get $\seminorm{\dom \cap U_i,2}{\ul{\widehat{v}}_{h,i}} \lesssim \seminorm{1,h,i}{{\ul{\tilde{v}}_{h,i}}}$. Moreover, for all $f\in\Fhb$, $\norm{L^2(f)}{\widehat{v}_{f,i}}
  =\norm{L^2(f)}{\lproj{f}{0}\tilde{v}_{f,i}}\le \norm{L^2(f)}{\tilde{v}_{f,i}}\overset{\eqref{eq:lifting.local}}=\norm{L^2(f)}{w_f}$. Plugging these estimates into \eqref{eq:lifting.reconstructed.eq1} yields
   \begin{equation}\label{eq:lifting.reconstructed.eq2}
    \norm{L^2(\dom \cap U_i)}{\tilde{v}_{h,i}} \lesssim \seminorm{1,h,i}{{\ul{\tilde{v}}_{h,i}}}+ \norm{L^2(\partial \dom)}{w_h}
    \overset{\eqref{eq:lifting.local}}\lesssim  \seminorm{1/2,h}{w_h},
  \end{equation}
  where, in the conclusion, we have additionally used Lemma \ref{lem:PW} with $P_0 = \partial \dom$, recalling that $w_h$ satisfies \eqref{eq:wh.zero.integral}.
  Plugging \eqref{eq:lifting.reconstructed.eq2} into \eqref{eq:lifting.reconstructed.eq0} concludes the proof of Proposition \ref{prop:lifting.reconstructed}.
\end{proof}

\subsection{More general hybrid spaces}\label{sec:general.hybrid.space}

A range of polytopal methods are based on discrete spaces spanned by vectors of polynomials in the cells and on the faces: Hybrid High-Order \cite{di-pietro.droniou:2020:hybrid}, Hybridizable Discontinuous Galerkin \cite{cockburn.dong.ea:2009:hybridizable}, non-conforming Virtual Element Method \cite{Dios-Lipnikov-Manzini-ncVEM}, Weak Galerkin \cite{mu.wang.ea:2015:weak}, etc. On the contrary to those in $\Uh$, these polynomials may not be of equal degree in the cells and on the faces, or even not of the same degrees on all cells/faces. Nonetheless, an easy argument shows that the results of Theorems \ref{thm:trace} and \ref{thm:lifting} still apply to these situations.

Let us briefly detail this argument when the underlying space $\underline{V}_h$ of a considered method is made of polynomials having different degrees in cells and faces:
for some $\ell,r\ge 0$,
$$
  \underline{V}_h=\bigtimes_{t\in\Th}\mathbb{P}_\ell(t)\times \bigtimes_{f\in\Fh}\mathbb{P}_r(f).
$$
Setting $k=\max(\ell,r)$, we have $\underline{V}_h\subset \Uh$. The restriction to $\underline{V}_h$ of the trace operator $\gamma:\Uh\to\Uhbb$ yields a trace operator $\gamma_{V,h}:\underline{V}_h\to V_h^{\partial, \boundary}$ where $V_h^{\partial, \boundary}\doteq\bigtimes_{f\in\Fhb}\mathbb{P}_r(f)$.
Endowing $\underline{V}_h$ and $V_h^{\partial,\boundary}$ with the restrictions of the discrete norms \eqref{eq:def.disc.H1} and \eqref{eq:def.disc.Hhalf}, respectively, Theorem \ref{thm:trace} applied to $\Uh$ directly gives a trace inequality for $\gamma_{V,h}$. In practical situations, it can be checked that the energy norm of a specific method is equivalent to the restriction of \eqref{eq:def.disc.H1} on its space; hence, the trace inequality obtained for $\gamma_{V,h}$ is a suitable one, as it can be written in terms of this energy norm on $\ul{V}_h$.

Concerning the lifting, since $V_h^{\partial,\boundary}\subset \Uhbb$, using Theorem \ref{thm:lifting} we can lift any $w_h\in V_h^{\partial, \boundary}$ into $\ul{v}_h\in\Uh$. An examination of the construction of this lifting (see, in particular, \eqref{eq:reconstructed} and \eqref{eq:def.vhi}) shows that $v_t\in\mathbb{P}_0(t)$ for all $t\in\Th$ and $v_f\in\mathbb{P}_0(f)$ for all $f\in\Fhi$. Since $v_f=w_f\in\mathbb{P}_r(f)$ for all $f\in\Fhb$, we infer that the lifting $\ul{v}_h$ actually belongs to $\underline{V}_h$. This argument therefore gives a lifting $V_h^{\partial, \boundary}\to\underline{V}_h$, with suitable seminorm estimates.

\begin{remark}[No cell values]
Some low-order schemes (e.g., some variants of HHO \cite[Section 5.1]{di-pietro.droniou:2020:hybrid}) do not have cell values. In this case, the lifted $\ul{v}_h$ constructed above does not belong to the space of the method. An easy work-around then consists in simply discarding the cell unknowns in $\ul{v}_h$ and in only keeping the face unknowns.
\end{remark}

\begin{remark}[No face values]
Some polytopal methods rely on discrete spaces without face values -- for example, broken polynomial spaces on the mesh cells as in DG.
These spaces can however easily be embedded into $\Uh$, by adding face values equal to the averages of the traces of the two cell values on each side of each face (or just the trace of the cell value containing a boundary face). This allows for a straightforward adaptation of Theorems \ref{thm:trace} and \ref{thm:lifting} to these methods.
\end{remark}


\section{Numerical tests}\label{sec:NumExp}

In this section, we present numerical simulations that validate the theoretical results. The experiment is conducted in \textit{Gridap} \cite{Badia2020,Verdugo-Badia-2022-Gridap}, an open source PDE approximation toolbox written in the Julia programming language. The domain is a unit square meshed with a family of uniform Cartesian meshes that satisfy the mesh regularity Assumption \ref{assum:reg.mesh}. The test verifies the equivalence, for discretely harmonic functions, between the discrete $H^1$-seminorm on the domain and the discrete $H^{1/2}$-seminorm on the boundary, which is a consequence of Theorems \ref{thm:trace} and \ref{thm:lifting}. 
We present the construction of the test, followed by a brief explanation of the results. 


Given $w_h \in \Uhbb$, let us define the discrete harmonic extension $\mathcal{E}_h({w}_h) \in \Uh$ such that $\mathcal{E}_h({w}_h) = {w}_h$ on $\partial\dom$ and $\mathcal E_h(w_h)$ minimises the $H^1(\dom)$-seminorm; in other words, $\mathcal E_h(w_h)$ solves 
$$
  \text{$\mathcal{E}_h(w_h) = w_h$ on $\partial\dom$, and $\langle \mathcal{A}_h \mathcal{E}_h(w_h), \ul{v}_h \rangle = 0$ for all $\ul{v}_h \in \ul{U}_{h,0}$},
$$ 
where $\mathcal{A}_h$ is the operator corresponding to the discrete $H^1(\dom)$-seminorm and $\ul{U}_{h,0}$ is the subspace of $\Uh$ with zero trace on $\partial\dom$. The following seminorm equivalence is a direct consequence of Theorems \ref{thm:trace} and \ref{thm:lifting}, and of the definition of the discrete harmonic extension:
\begin{align}\label{eq:equivalence}
\seminorm{1,h}{\mathcal{E}_h(w_h)} \simeq \tnorm{1/2, h}{{w}_h}  \qquad \forall w_h \in \Uhbb.
\end{align}
Using Lemma \ref{lem:PW}, we note that this seminorm equivalence is actually a norm equivalence on the space $\Uhbb \cap L_0^2(\partial\dom)$, where $L_0^2(\partial\dom)$ is the subspace of $L^2(\partial\dom)$ spanned by functions with zero integral on the boundary.

We consider a suitable choice of basis vectors spanning the hybrid space $\Uh$, following an approach analogous to the one detailed in \cite[Appendix B]{di-pietro.droniou:2020:hybrid}. We number the boundary faces as $(f_m)_{m = 1, \ldots, \# \Fhb}$, and label shape functions $\phi_i^{f_m} \in \Uhbb$ on each face $f_m$ by a face-local index $i \in \{1, \ldots, \mathcal{N}_{f_m}\}$, $\mathcal{N}_{f_m}{=\mathrm{dim}(\mathbb{P}_k(f_m))}$ being the number of degrees of freedom belonging to $f_m$. 

Let $\HoneOp$ and $\HhalfOp$ be the Gram matrices associated to the discrete $H^1(\dom)$- and $H^{1/2}(\partial\Omega)$-seminorms \eqref{eq:def.disc.H1} and \eqref{eq:def.disc.Hhalf}, respectively. $\HoneOp$ has the following block structure:
\[
\HoneOp = 
\renewcommand\arraystretch{1}
\mleft[
\begin{array}{c c c}
\HoneOp_{\Th \Th} & \HoneOp_{\Th \Fhi} & \HoneOp_{\Th \Fhb} \\
\HoneOp_{\Fhi \Th} & \HoneOp_{\Fhi \Fhi} & \HoneOp_{\Fhi \Fhb} \\
\HoneOp_{\Fhb \Th} & \HoneOp_{\Fhb \Fhi} & \HoneOp_{\Fhb \Fhb} 
\end{array}
\mright].
\]
We can compute the Schur complement of this matrix with respect to boundary degrees of freedom, denoted by $\HoneOp_{SC}$, as
$$
\HoneOp_{SC} \doteq \HoneOp_{\Fhb \Fhb} - \left[ \HoneOp_{\Fhb \Th} \;\; \HoneOp_{\Fhb \Fhi} \right] \begin{bmatrix}
\HoneOp_{\Th \Th} & \HoneOp_{\Th \Fhi} \\
\HoneOp_{\Fhi \Th} & \HoneOp_{\Fhi \Fhi}
\end{bmatrix}^{-1} 
\begin{bmatrix}
\HoneOp_{\Th \Fhb} \\
\HoneOp_{\Fhi \Fhb}
\end{bmatrix}.
$$
$\HoneOp_{SC}$ is a symmetric and positive semi-definite matrix of dimension $\mathcal{N}^{\partial, \boundary} \times \mathcal{N}^{\partial, \boundary}$, where $\mathcal{N}^{\partial, \boundary}$ is the dimension of $\Uhbb$. 

In order to build the hybrid operator $\HhalfOp$, we restrict the shape functions to the ones associated with the boundary faces, that is, {the shape functions that span} $\Uhbb$. 
The non-local part of $\HhalfOp$ can be written in matrix form by assembling contributions by pairs of faces.

Given a pair of boundary faces $(f_l,f_m) \in \Fhb \times \Fhb$, we define the corresponding local contribution $K \boldsymbol{\Delta}^{\top} \boldsymbol{\Delta}$ as:
\begin{align*}
&K \doteq  \frac{|f_l|_{d-1} |f_m|_{d-1}}{|x_{f_l} - x_{f_m}|^d} 
, \qquad
&\boldsymbol{\Delta}_{ij} \doteq \left[ \bar{\phi}^{f_l}_i - \bar{\phi}^{f_m}_j \right]_{1 \leq i \leq \mathcal{N}_{f_l}, 1 \leq j \leq \mathcal{N}_{f_m} }. 
\end{align*}
This local 4D array is assembled into the global matrix $\HhalfOp$ by using a local-to-global indexing of the degrees of freedom associated with the boundary faces. The local part of the operator $\HhalfOp$ can be computed analogously to $\HoneOp$, resulting in a matrix $\HhalfOp$ of dimension $\mathcal{N}^{\partial, \boundary} \times \mathcal{N}^{\partial, \boundary}$. 

To avoid zero eigenvalues and indefinite matrices, we proceed as follows. We define the rank-one matrix $\boldsymbol{S}^{\top} \boldsymbol{S}$, where $\boldsymbol{S}\doteq [ \int_{\partial \dom} \phi_a ]_{1 \leq a \leq \mathcal{N}^{\partial, \boundary}}$ is a row vector, $\phi_a$ denoting the shape functions of $\Uhbb$ labelled with global index $a$. 
Thus, if $\boldsymbol{w}$ is the vector representing $w_h\in\Uhbb$ in the chosen basis on $\Uhbb$, we have $\boldsymbol{S} \boldsymbol{w} = \int_{\partial \Omega} {w}_h$ and $\boldsymbol{w}^{\top} \boldsymbol{S}^{\top} \boldsymbol{S} \boldsymbol{w} = (\int_{\partial \Omega} w_h)^2$; $\boldsymbol{S}$ thus serves the purpose of fixing the kernels of the operators $\HhalfOp$ and $\HoneOp_{SC}$. Since the bilinear form $\boldsymbol{S}^{\top} \boldsymbol{S}$ vanishes on vectors corresponding to functions with zero integral, the seminorm equivalence \eqref{eq:equivalence} readily implies:
\begin{align}\label{eq:equivalence2}
\boldsymbol{w}^{\top} \left( \HhalfOp + \boldsymbol{S}^{\top} \boldsymbol{S} \right) \boldsymbol{w} \simeq \boldsymbol{w}^{\top} \left( \HoneOp_{SC} + \boldsymbol{S}^{\top} \boldsymbol{S} \right) \boldsymbol{w}, \qquad \forall {w}_h \in \Uhbb \cap L^2_0(\partial \dom).
\end{align}
Using the fact that both sides in the above equality match for vectors corresponding to constant functions, and that $(\Uhbb \cap L^2_0(\partial \dom)) \oplus \mathbb{R} = \Uhbb$, we see that \eqref{eq:equivalence2} implies the equivalence \eqref{eq:equivalence} for all functions in $\Uhbb$. 

We experimentally verify the equivalence in \eqref{eq:equivalence2} by comparing the eigenvalues of the operators $\HhalfOp$ and $\HoneOp_{SC}$. In particular, we solve the following generalised eigenvalue problem:
\begin{align}\label{eq:EVP}
\left[ \left( \HhalfOp + \boldsymbol{S}^{\top} \boldsymbol{S} \right)^{-1} \left( \HoneOp_{SC} + \boldsymbol{S}^{\top} \boldsymbol{S} \right) \right] \boldsymbol{W} = \boldsymbol{W} \boldsymbol{\Lambda}.
\end{align}
Here, the diagonal elements $\boldsymbol{\Lambda} \doteq \boldsymbol{\mbox{diag}}(\lambda_i)_{i=1,\ldots,\mathcal{N}^{\partial, \boundary}}$ are the eigenvalues and the columns of $\boldsymbol{W}$ are the eigenvectors. In order for \eqref{eq:equivalence2} to hold, the eigenvalues of \eqref{eq:EVP} should be bounded away from zero and bounded above by a constant independent of the mesh size.

Figure \ref{fig:EigenvaluesVsDofs} presents the maximum and minimum eigenvalues of \eqref{eq:EVP} against the number of degrees of freedom associated with the hybrid space $\Uh$, for different polynomial orders. 
We see an asymptotic trend towards a constant for the maximum and minimum eigenvalues as we increase the number of degrees of freedom, indicating that the eigenvalues of \eqref{eq:EVP} are indeed bounded above and below independently of $h$, which verifies the correctness of Theorems \ref{thm:trace} and \ref{thm:lifting}. 
\begin{figure}[!htbp]
  \centering
  \begin{subfigure}[b]{0.48\linewidth}
    \centering
    \includegraphics[width=\linewidth]{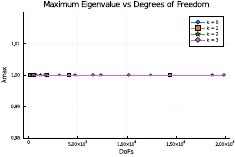}
    \caption{Maximum eigenvalues}
    \label{fig:MaxEigenvalues}
  \end{subfigure}
  \hfill
  \begin{subfigure}[b]{0.48\linewidth}
    \centering
    \includegraphics[width=\linewidth]{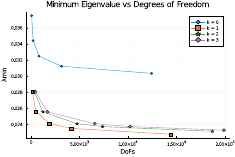}
    \caption{Minimum eigenvalues}
    \label{fig:MinEigenvalues}
  \end{subfigure}
  \caption{Eigenvalues of \eqref{eq:EVP} versus the number of degrees of freedom of $\Uh$ for polynomial orders $0, 1, 2$ and $3$.}
  \label{fig:EigenvaluesVsDofs}
\end{figure}

\section{Appendix: Tables of notations}\label{sec:appendix}

We collect in Tables \ref{Table:Notation_Symbols} and \ref{Table:Notation_Sets} the notations used in the paper. The first table contains symbols related to points, faces and distances, while the second describes symbols related to sets of faces and cells, as well as other relevant notations.

\begin{table}[h!]
\centering
 \renewcommand*{\arraystretch}{1.2}
  \begin{tabular}{| c | l |}
      \hline
      \textbf{Symbol} & \textbf{Description} \\
      \hline
      $p(x)$ & Cubic $\dom$: orthogonal projection of $x\in \dom$ on a face of $\dom$ (see Figure \ref{fig:Slicing-6-7}-(B)). \\
              & Polytopal $\dom$: in a local chart, vertical projection of $x\in\dom$ on $\partial\dom$ (see Figure \ref{fig:Polytopal-1-2}-(B)).\\
      \hline
      $x_Z$  & Centroid of $Z$, a face or cell of the mesh. \\
      \hline
      $D(x,r)$ & Disc on $\partial\dom$ centered at $x$ with radius $r$. \\
      \hline
      $\delta_Z$& Distance between $x_Z$ and $\partial\dom$ (for $Z$ a mesh cell or face), defined by $|x_Z - p(x_Z)|$. \\
      \hline
      $\dist(z,f)$ & Distance between $z \in \partial\dom$ and $f \subset \partial\dom$, when $\partial\dom$ is flat.\\
      \hline
      $\distV(x,\partial\dom)$ & In a local chart, vertical distance from $x\in\dom$ to $\partial\dom$. See \eqref{eq:dist.vertical.polytopal}.\\
      \hline
      $\distH(x,y)$ & In a local chart, horizontal distance between $x,y\in\partial\dom$. See \eqref{eq:dist.dOmega.polytopal}.\\
      \hline
  \end{tabular}
  \caption{Points, faces and distance}
  \label{Table:Notation_Symbols}
\end{table}

\begin{table}[h!]
\centering
 \renewcommand*{\arraystretch}{1.2}
  \begin{tabular}{| c | l |}
        \hline
      \textbf{Symbol} & \textbf{Description} \\
      \hline
      $\FFhb$ & Set of pairs $(f,f')$ of distinct faces on $\partial\Omega$. See \eqref{eq:def.FFhb}.\\
      \hline
      $\An{r_1}{r_2}{x}$ & Annulus on $\partial \dom$ centered at $x$ and with radii $r_1$ and $r_2$ such that $r_1 > r_2$. \\
      \hline
      $\W_l$ & Set of pairs of boundary faces that are \emph{approximately at distance $\simeq lh$} \\ & \emph{of each other}. See \eqref{Wl}. \\
      \hline
      $\W_{lf}$ & Slice of the set $\W_l$ at the face $f$, that is, faces $f'$ that 
                are \emph{approximately} \\ & \emph{at distance $lh$ of $f$}. See \eqref{Rfl}. \\
      \hline
      $\Above_{\fbou}$ & Set of cells \emph{at the vertical of $f$}. See \eqref{def:set.above}. \\
      \hline
      $\La_m$ & Layer of cells in $\Th$ that are at distance $\simeq mh$ of $\partial\dom$. See \eqref{set:Layers}. \\
      \hline
      $\Iffp$ & Set of cells between $f,f'\in\Fhb\times\Fhb$ at height $|x_{f} - x_{f'}|$ above $\partial\dom$. \\  & See \eqref{set:Iffp} and Figure \ref{fig:Slicing-6-7}-(A). \\
      \hline
      $\Ca$ & Set of cells in $\Iffp$ whose projection of the centroid on $\partial\dom$ is at distance $\simeq sh$ of $f$.\\& See \eqref{set:HorizontalLayers} and Figure \ref{fig:Slicing-6-7}-(B). \\
      \hline
      $A_t$ & Region covered by the faces in $\partial\Omega$ within distance $\delta_t$ of $p(x_t)$. See \eqref{def:At} and Figure \ref{fig:illustration-At}.\\
      \hline
      $\mathcal T_\fint$ & Set of two cells on each side of $\fint\in\Fhi$.\\
      \hline
      $A_g$ and $\Delta_g$ & Union and symmetric difference of $A_t$ and $A_{t'}$ for $t,t'\in\mathcal T_g$.\\
      \hline
      $\projFace(\fint)$ & Selected boundary face containing $p(x_\fint)$ in its closure. See \eqref{eq:def.projFace} and Figure \ref{fig:proj-g}.\\
      \hline
      $\projFace^\dagger(\fbou)$ & Set of internal faces \emph{approximately} above $\fbou$. See \eqref{eq:def.projFacedagger}.\\
      \hline
  \end{tabular}
  \caption{Sets of faces and regions}
  \label{Table:Notation_Sets}
\end{table}

\section*{Acknowledgments}
We thank Alberto F.~Mart\'in and Jordi Manyer for their support while implementing the numerical experiment using Gridap. 
This research was partially funded by the Australian Government through the Australian Research Council
(project numbers DP210103092 and DP220103160). We also acknowledge the funding of the European Union through the ERC Synergy scheme (NEMESIS, project number 101115663). Views and opinions expressed are however those of the authors only and do not necessarily reflect those of the European Union or the European Research Council Executive Agency. Neither the European Union nor the granting authority can be held responsible for them.

\printbibliography

\end{document}